\numberwithin{equation}{section}
\DeclareMathOperator\GL{GL}
\DeclareMathOperator\id{id}
\DeclareMathOperator\coker{coker}
\DeclareMathOperator\Spf{Spf}
\DeclareMathOperator\Spa{Spa}
\DeclareMathOperator\pr{pr}
\DeclareMathOperator\Fil{Fil}
\DeclareMathOperator\rig{rig}
\DeclareMathOperator\ad{ad}
\DeclareMathOperator\Ad{Ad}
\DeclareMathOperator\lft{lft}
\renewcommand{\phi}{\varphi}
\newcommand{\Rscr}{\mathscr{R}}
\newcommand{\Acal}{\mathscr{A}}
\newcommand{\Bcal}{\mathscr{B}}
\newcommand{\Ecal}{\mathcal{E}}
\newcommand{\Fcal}{\mathcal{F}}
\newcommand{\Mcal}{\mathcal{M}}
\newcommand{\Ncal}{\mathcal{N}}
\newcommand{\Ocal}{\mathcal{O}}
\newcommand{\Ucal}{\mathcal{U}}
\newcommand{\Xcal}{\mathcal{X}}
\newcommand{\Vcal}{\mathcal{V}}
\newcommand{\Q}{\mathbb{Q}}
\newcommand{\Z}{\mathbb{Z}}
\newcommand{\C}{\mathbb{C}}
\newcommand{\Abb}{\mathbb{A}}
\newcommand{\boldB}{\mathbb{B}}
\newcommand{\Fbb}{\mathbb{F}}
\newcommand{\Pbb}{\mathbb{P}}
\newcommand{\Ubb}{\mathbb{U}}
\newcommand{\bfA}{{\bf A}}
\newcommand{\bfB}{{\bf B}}
\newcommand{\bfE}{{\bf E}}
\newcommand{\Dfrak}{\mathfrak{D}}
\newcommand{\Mfrak}{\mathfrak{M}}
\newcommand{\Nfrak}{\mathfrak{N}}
\newcommand{\mfrak}{\mathfrak{m}}
\newtheorem{theo}{Theorem}[section]
\newtheorem{lem}[theo]{Lemma}
\newtheorem{prop}[theo]{Proposition}
\newtheorem{cor}[theo]{Corollary}
\newtheorem{conj}[theo]{Conjecture}
\theoremstyle{remark}
\newtheorem{rem}[theo]{Remark}
\theoremstyle{remark}
\newtheorem{expl}[theo]{Example}
\theoremstyle{definition}
\newtheorem{defn}[theo]{Definition}
\begin{document}

\title[Families of $p$-adic Galois representations ]{Families of  $p$-adic Galois representations and $(\phi,\Gamma)$-modules}
\author[E. Hellmann]{Eugen Hellmann}
\begin{abstract}
We investigate the relation between $p$-adic Galois representations and overconvergent $(\phi,\Gamma)$-modules in families.
Especially we construct a natural open subspace of a family of $(\phi,\Gamma)$-modules, over which it is induced by a family of Galois-representations.
\end{abstract}
\maketitle

\section{Introduction}
In \cite{BergerColmez}, Berger and Colmez generalized the theory of overconvergent $(\phi,\Gamma)$-modules to families parametrized by $p$-adic Banach algebras. 
More preceisely their result gives a fully faithful functor from the category of vector bundles with continuous Galois action on a rigid analytic variety to the category of families of \'etale overconvergent $(\phi,\Gamma)$-modules.
This functor fails to be essentially surjective. However it was shown by Kedlaya and Liu in \cite{KedlayaLiu} that this functor can be inverted locally around rigid analytic points. 

It was already pointed out in our previous paper \cite{families} that the right category to handle these objects is the category of adic spaces (locally of finite type over $\Q_p$) as introduced by Huber, see \cite{Huber}.
Using the language of adic spaces, we show in this paper that given a family $\Ncal$ of $(\phi,\Gamma)$-modules over the relative Robba ring $\Bcal_{X,\rig}^\dagger$ on an adic space $X$ locally of finite type over $\Q_p$ (see below for the construction of the sheaf $\Bcal_{X,\rig}^\dagger$), one can construct natural open subspaces $X^{\rm int}$ resp.~$X^{\rm adm}$, where the family $\Ncal$ is \'etale resp.~induced by a family of Galois representations. 

Our main results are as follows. Let $K$ be a finite extension of $\Q_p$ and write $G_K$ for its absolute Galois group. Further we fix a cyclotomic extension $K_\infty=\bigcup K(\mu_{p^n})$ of $K$ and write $\Gamma={\rm Gal}(K_{\infty}/K)$. 
\begin{theo}
Let $X$ be a reduced adic space locally of finite type over $\Q_p$, and let $\Ncal$ be a family of $(\phi,\Gamma)$-modules over the relative Robba ring $\Bcal_{X,\rig}^\dagger$.\\
\noindent {\rm (i)} There is a natural open subspace $X^{\rm int}\subset X$ such that the restriction of $\Ncal$ to $X^{\rm int}$ is \'etale, i.e. locally on $X^{\rm int}$ there is a family of \'etale lattices $\Nfrak\subset \Ncal$.\\
\noindent {\rm (ii)} The formation $(X,\Ncal)\mapsto X^{\rm int}$ is compatible with base change in $X$, and $X=X^{\rm int}$ whenever the family $\Ncal$ is \'etale.
\end{theo}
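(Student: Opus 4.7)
The plan is to define $X^{\rm int}$ pointwise via a slope condition on the fibers $\Ncal_x$, to establish openness by semi-continuity of Newton polygons in families, and then to construct étale lattices locally by lifting a fiberwise lattice.

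For each point $x \in X$ the fiber $\Ncal_x$ is a $(\phi,\Gamma)$-module over the Robba ring of the residue field $k(x)$, to which Kedlaya's slope filtration theorem applies. I would set
\[
X^{\rm int} = \bigl\{ x \in X \mid \text{all Newton slopes of } \Ncal_x \text{ equal } 0 \bigr\}.
\]
Because the slope of the rank-one $\phi$-module $\det\Ncal$ is locally constant on $X$, this locus coincides, on the open-and-closed union of components where $\det\Ncal$ has slope $0$, with $\{x : \mu_{\max}(\Ncal_x) \leq 0\}$, and is empty elsewhere. Openness therefore reduces to openness of the maximal-slope condition. This follows from upper semi-continuity of $\mu_{\max}$ combined with the fact that the slopes of a rank-$r$ $\phi$-module are rationals with denominator at most $r$: the set $\{\mu_{\max} \leq 0\}$ agrees with $\{\mu_{\max} < 1/(r+1)\}$ and is thus open. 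The requisite semi-continuity in this adic setting is either present in our earlier paper \cite{families} or extractable from the relative slope-filtration results of \cite{KedlayaLiu}. The adic framework is essential here: analytic non-rigid points at which $\Ncal_x$ is étale must be included in $X^{\rm int}$, which is precisely what adic spaces allow.

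For the existence of étale lattices on $X^{\rm int}$, I would fix a point $x \in X^{\rm int}$, pick a $(\phi,\Gamma)$-stable lattice $\Nfrak_x \subset \Ncal_x$ provided by étaleness of the fiber, and spread it out to a small adic neighborhood. Concretely, a local frame of $\Ncal$ near $x$ represents $\phi$ by a matrix whose specialization at $x$ is the transition matrix of an integral basis of $\Nfrak_x$; shrinking the neighborhood of $x$ so that this matrix remains integral with integral inverse yields a $\phi$-stable lattice, and the fiberwise étaleness persists by openness of $X^{\rm int}$. Enforcing $\Gamma$-equivariance then follows by a Tate-style averaging over $\Gamma$, using continuity of the $\Gamma$-action and the fact that sufficiently small open subgroups of $\Gamma$ must already preserve a given lattice. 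The main obstacle is the passage from fiberwise integrality to integrality in the relative bounded Robba ring, and verifying that the resulting object is genuinely a locally free $\phi$-stable submodule in a neighborhood; this is where the careful integral structures of \cite{KedlayaLiu} and the adic refinements of \cite{families} must be brought to bear.

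Finally, base-change compatibility is immediate, since the Newton polygon of $\Ncal_x$ depends only on the fiber and the condition cutting out $X^{\rm int}$ commutes with pullback. The equality $X = X^{\rm int}$ when $\Ncal$ is already étale is clear, as étaleness of $\Ncal$ forces the fiberwise Newton polygon to vanish identically.
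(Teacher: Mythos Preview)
Your outline matches the paper's approach: in the body, the proofs of both parts are simply citations to \cite{families} (Corollary~6.11 for openness and the existence of local \'etale lattices, Proposition~6.14 for base change), and the paper's pointwise definition $X^{\rm int}=\{x\in X: \iota_x^\ast\Ncal\text{ is \'etale}\}$ agrees with your slope-zero characterization by Kedlaya's theorem over each completed residue field. The semi-continuity route to openness and the spreading-out of a fiberwise lattice that you sketch are indeed the mechanisms behind those cited results.

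One caution on the lattice-spreading step. You write that after choosing a frame specializing to an integral basis at $x$, one may ``shrink the neighborhood so that the matrix remains integral with integral inverse.'' That step is not available: membership in $\Acal_X^\dagger$ is \emph{not} an open condition in the Fr\'echet topology of $\Bcal_{X,\rig}^\dagger$. For instance, on $X=\Spa(\Q_p\langle z\rangle,\Z_p\langle z\rangle)$ the element
\[
g=z\sum_{n\geq 1}(-1)^{n+1}T^n/n=z\log(1+T)\in\Bcal_{X,\rig}^{\dagger,s}
\]
specializes to $0\in\Acal^{\dagger,s}_{\Q_p}$ at the origin $z=0$, yet $g\notin\Acal_U^{\dagger,s}$ for any affinoid neighborhood $U$ of $0$: on $\{|z|\leq|p|^N\}$ the $T^{p^k}$-coefficient $\pm z/p^k$ has absolute value $p^{k-N}$ at the point $z=p^N$. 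Producing an \'etale lattice near $x$ genuinely requires exploiting the Frobenius structure, not just shrinking; this is exactly the substance of the Kedlaya--Liu argument you invoke. You do flag, in the very next sentence, the ``passage from fiberwise integrality to integrality in the relative bounded Robba ring'' as the main obstacle and defer it to \cite{KedlayaLiu} and \cite{families}; just be aware that the preceding shrinking claim is not a warm-up version of that argument but simply fails.

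A small side remark: the $\Gamma$-averaging step is unnecessary. In this paper a $(\phi,\Gamma)$-module is declared \'etale precisely when its underlying $\phi$-module is, so the \'etale lattice is only required to be $\phi$-stable, not $\Gamma$-stable.
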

In the classical theory of overconvergent $(\phi,\Gamma)$-modules, the slope filtration theorem of Kedlaya, \cite[Theorem 1.7.1]{Kedlaya} asserts that a $\phi$-module over the Robba ring admits an \'etale lattice if and only if it is purely of slope zero. The latter condition is a semi-stability condition which only involves the slopes of the Frobenius. The question whether there is a generalization of this result to $p$-adic families was first considered by R. Liu in \cite{Liu}, where he shows that an \'etale lattice exists locally around rigid analytic points. Here we show that Kedlaya's result does not generalize to families. That is, we construct a family of $\phi$-modules which is \'etale at all rigid analytic points but which is not \'etale as a family of $(\phi,\Gamma)$-modules (in the sense specified below).

On the other hand, we construct an \emph{admissible} subset $X^{\rm adm}\subset X$ for a family of $(\phi,\Gamma)$-modules over $X$. This is the subset over which there exists a family of Galois representations. It will be obvious that we always have an inclusion $X^{\rm adm}\subset X^{\rm int}$.  
\begin{theo}
Let $X$ be a reduced adic space locally of finite type over $\Q_p$ and $\Ncal$ be a family of $(\phi,\Gamma)$-modules over the relative Robba ring $\Bcal_{X,\rig}^\dagger$.\\
\noindent {\rm (i)} There is a natural open and partially proper subspace $X^{\rm adm}\subset X$ and a family $\Vcal$ of $G_K$-representations on $X^{\rm adm}$ such that $\Ncal|_{X^{\rm adm}}$ is associated to $\Vcal$ by the construction of Berger-Colmez. \\
\noindent {\rm (ii)} The formation $(X,\Ncal)\mapsto (X^{\rm adm},\Vcal)$ is compatible with base change in $X$, and $X=X^{\rm adm}$ whenever the family $\Ncal$ comes from a family of Galois representations. \\
\noindent {\rm (iii)} Let $X$ be of finite type and let $\Xcal$ be a formal model of $X$. Let $Y\subset X$ be the tube of a closed point in the special fiber of $\Xcal$. If $Y\subset X^{\rm int}$, then $Y\subset X^{\rm adm}$.
\end{theo}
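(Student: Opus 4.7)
The plan is to argue pointwise on the classical rigid analytic points of the tube $Y$, using the local inversion theorem of Kedlaya-Liu, and then invoke the partial properness of both $Y$ and of $X^{\rm adm}$ to promote pointwise admissibility to admissibility on all of $Y$.

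Let $x \in \Xcal_s$ be the closed point whose tube is $Y$, and let $\widehat{R}$ denote the completion of the local ring of $\Xcal$ at $x$, so that $Y$ is the generic fiber of $\Spf \widehat{R}$. In particular $Y$ is an open and partially proper subspace of $X$, admitting a quasi-Stein exhaustion $Y = \bigcup_n U_n$ by an increasing sequence of affinoids. Since $Y \subset X^{\rm int}$ by hypothesis, every classical rigid analytic point $y \in Y$ lies in $X^{\rm int}$. The local inversion result of \cite{KedlayaLiu}, recalled in the introduction and already used in the construction of $X^{\rm adm}$, asserts that any rigid point of $X^{\rm int}$ admits an affinoid neighborhood contained in $X^{\rm adm}$. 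Intersecting such a neighborhood with $Y$ yields an affinoid neighborhood of $y$ in $Y$ that lies in $X^{\rm adm}$; hence every classical rigid point of $Y$ lies in $X^{\rm adm}$.

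To conclude, we globalize by partial properness. By part (i) of the theorem, $X^{\rm adm}$ is open and partially proper in $X$, and the tube $Y$ is open and partially proper by construction. A partially proper open subspace of an adic space locally of finite type over $\Q_p$ is determined by the set of its classical rigid analytic points. Both $Y$ and $Y \cap X^{\rm adm}$ are partially proper open subspaces of $X$ and, by the previous step, they contain exactly the same classical rigid points, namely those of $Y$. Hence $Y = Y \cap X^{\rm adm}$, i.e.\ $Y \subset X^{\rm adm}$.

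The main obstacle is the first step: ensuring that the Kedlaya-Liu pointwise local inversion actually yields a family of Galois representations on an entire affinoid neighborhood of a rigid point in $X^{\rm int}$, not merely an étale lattice. This should rest on earlier results in the paper that construct $X^{\rm adm}$ and prove its openness. Granted that, the globalization from rigid points to the whole tube is formal and invokes only the standard fact that partially proper open subspaces of an adic space are determined by their classical rigid points.
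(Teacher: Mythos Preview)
Your first step is fine: Kedlaya--Liu does give, for every rigid analytic point of $X^{\rm int}$, an affinoid neighborhood contained in $X^{\rm adm}$, so all rigid points of $Y$ lie in $X^{\rm adm}$. The gap is in the globalization step. The assertion that a partially proper open subspace of an adic space locally of finite type over $\Q_p$ is determined by its rigid analytic points is \emph{false}. For a counterexample, let $X$ be the adic closed unit disc and $\eta$ the Gauss point; then $U=X\setminus\overline{\{\eta\}}$ is open and closed under specialization (every higher-rank point in $\overline{\{\eta\}}$ has $\eta$ as its unique rank-$1$ generalization), so $U\hookrightarrow X$ is partially proper. Yet $U$ contains every rigid point of $X$ while $U\subsetneq X$. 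Equivalently, under the correspondence between partially proper opens and open subsets of the associated Berkovich space, $U$ corresponds to the complement of the single point $\eta$, open because the Berkovich space is Hausdorff. Hence knowing that $Y$ and $Y\cap X^{\rm adm}$ share the same rigid points does not force them to coincide, and your argument, which uses nothing about $Y$ beyond partial properness, cannot rule out this situation.

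The paper's route to (iii) is entirely different and genuinely exploits that $Y$ is a tube. It first invokes Proposition~\ref{exetalephimod} to construct a coherent \'etale $\phi$-submodule $\hat N\subset\hat\Ncal$ over $\Acal_{X,K}$ (this is the technical core, resting on the gluing arguments of Proposition~\ref{exofcoherentmodules}). Restricting to an affine neighborhood $\Spf A^+$ of $x_0$ and completing at the corresponding maximal ideal yields an \'etale $\phi$-module over $R\,\widehat\otimes_{\Z_p}\,\bfA_K$, where $R$ is the complete local ring; Dee's equivalence \cite{Dee} then produces an $R$-linear Galois representation, which spreads to a family on the tube $Y$ and forces $Y\subset X^{\rm adm}$. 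The passage through the integral structure over the complete local ring is exactly what captures the non-rigid points of $Y$ that a purely pointwise argument cannot reach.
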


In a forthcoming paper we will apply the theory developed in this article to families of trianguline $(\phi,\Gamma)$-modules and give an alternative construction of Kisin's finite slope space. 

{\bf Acknowledgements:} I thank R. Liu, T. Richarz, P. Scholze and M. Rapoport for helpful conversations. The author was partially supported by the SFB TR 45 of the DFG (German Research Foundation). 

\section{Sheaves of period rings}
In this section we define relative versions of the classical period rings used in the theory of $(\phi,\Gamma)$-modules and in $p$-adic Hodge-theory.
Some of these sheaves were already defined in \cite[8]{families}.

Let $K$ be a finite extension of $\Q_p$ with ring of integers $\Ocal_K$ and residue field $k$. Fix an algebraic closure $\bar K$ of $K$ and write $G_K={\rm Gal}(\bar K/K)$ for the absolute Galois group of $K$. As usual we choose a compatible system $\epsilon_n\in\bar K$ of $p^n$-th root of unity and write $K_{\infty}=\bigcup K(\epsilon_n)$. Let $H_K\subset G_K$ denote the absolute Galois group of $K_{\infty}$ and write $\Gamma={\rm Gal}(K_{\infty}/K)$. Finally we denote by $W=W(k)$ the ring of Witt vectors with coefficients in $k$ and by $K_0={\rm Frac}\, W$ the maximal unramified extension of $\Q_p$ inside $K$.
\subsection{The classical period rings}
We briefly recall the definitions of the period rings, as defined in \cite{Berger1} for example. 
Write 
\[\tilde \bfE^+=\lim\limits_{\substack{\longleftarrow \\ x\mapsto x^p}} \Ocal_{\C_p}/p\Ocal_{\C_p}.\]
This is a perfect ring of characteristic $p$ which is complete for the valuation ${\rm val}_{\bfE}$ given by ${\rm val}_{\bfE}(x_0,x_1,\dots)={\rm val}_p(x_0)$.
Let 
\[\tilde\bfE={\rm Frac}\,\tilde\bfE^+=\tilde\bfE^+[\tfrac{1}{\underline{\epsilon}}],\]
where $\underline\epsilon=(\epsilon_1,\epsilon_2,\dots)\in\tilde\bfE^+$. Further we define 
\begin{align*}
\tilde\bfA^+&=W(\tilde\bfE^+) & \tilde\bfA&=W(\tilde\bfE),\\
\tilde\bfB^+&=\tilde\bfA^+[1/p] & \tilde\bfB&=\tilde\bfA[1/p].
\end{align*}
On all these ring we have an action of the Frobenius morphism $\phi$ which is induced by the $p$-th power map on $\tilde\bfE$.
Further we consider the ring $\bfA_K$ which is the $p$-adic completion of $W((T))$ and denote by $\bfB_K=\bfA_K[1/p]$ its rational analogue. We embed these rings into $\tilde\bfB$ by mapping $T$ to $[\underline{\epsilon}]-1$. The morphism $\phi$ induces the endomorphism $T\mapsto (T+1)^p-1$ on $\bfA_K$, resp. $\bfB_K$. Further $G_K$ acts on $\bfA_K$ through the quotient $G_K\rightarrow \Gamma$.

For $r<s\in\Z$ we define
\begin{align*}
\bfA^{[r,s]}&=\left \{\sum_{n\in\Z}a_nT^n\left | a_n\in K_0,{\begin{array}{*{20}c}0\leq {\rm val}_p(a_{n}p^{n/r})\rightarrow \infty,\, n\rightarrow -\infty\\ 0\leq {\rm val}_p(a_np^{n/s})\rightarrow \infty,\, n\rightarrow \infty\end{array}}\right\},\right.\\
\tilde\bfA^{\dagger,r}&=\left \{\sum_{n\in\Z}[x_n]p^n\left| x_n\in\tilde\bfE, 0\leq {\rm val}_{\bfE}(x_n)+\tfrac{prn}{p-1}\rightarrow \infty,\, n\rightarrow \infty\right\},\right.\\
\tilde\bfB^{\dagger,r}&=\left \{\sum_{n\in\Z}[x_n]p^n\left| x_n\in\tilde\bfE,  {\rm val}_{\bfE}(x_n)+\tfrac{prn}{p-1}\rightarrow \infty,\, n\rightarrow \infty\right\}.\right.
\end{align*}
The rings $\tilde\bfA^{\dagger,r}$ and $\tilde\bfB^{\dagger, r}$ are endowed with the valuation 
\[w_r:\sum p^k[x_k]\longmapsto\inf_k\big\{{\rm val}_{\bfE}(x_k)+\tfrac{prk}{p-1}\big\}.\]
Using these definitions the usual period rings are defined as follows:
\begin{equation}\label{tilderings}
\begin{aligned}
\tilde\bfB_{\rig}^{\dagger,s}&=\text{Frechet completion of}\ \tilde\bfB^{\dagger, s}\ \text{for the valuations}\ w_{s'},\,s'\geq s,\\
\tilde\bfB^{\dagger}&=\lim\nolimits_\rightarrow \tilde\bfB^{\dagger,s},\\ \tilde\bfB^{\dagger}_{\rig}&=\lim\nolimits_\rightarrow \tilde\bfB^{\dagger,s}_{\rig}.
\end{aligned}
\end{equation}
\begin{equation}\label{periodrings}
\begin{aligned}
\bfB^{[r,s]}&=\bfA^{[r,s]}[1/p],  & \bfA^{\dagger,r}&=\lim\nolimits_\leftarrow \bfA^{[r,s]},\\
\bfB^{\dagger,s}&=\bfB_K\cap \tilde\bfB^{\dagger,s},  & \bfB^{\dagger,r}_{\rig}&=\lim\nolimits_\leftarrow \bfB^{[r,s]},\\ 
\bfA^{\dagger}&=\bfA_K\cap\bfB^\dagger, & \bfB^{\dagger}&=\lim\nolimits_\rightarrow\bfB^{\dagger,s},\\ 
\bfB^{\dagger}_{\rig}&=\lim\nolimits_\rightarrow \bfB^{\dagger,s}_{\rig}.  
\end{aligned}
\end{equation}

Note that these definitions equip all rings with a canonical topology. 
There is a canonical action of $G_K$ on all of these rings. This action is continuous for the canonical topology. The $H_K$-invariants of $\tilde{\bf R}$ for any of the rings in $(\ref{tilderings})$ are given by the corresponding ring without a tilde ${\bf R}$ in $(\ref{periodrings})$, where ${\bf R}$ is identified with a subring of $\tilde{\bf R}$ by $T\mapsto [\underline{\epsilon}]-1$. Hence there is a natural continuous $\Gamma$-action on all the rings in $(\ref{periodrings})$.
The Frobenius endomorhpism $\phi$ of $\tilde\bfB$ induces a ring homomorphisms
\begin{align*}
\bfA^{[r,s]}&\longrightarrow \bfA^{[pr,ps]}\\
\tilde\bfA^{\dagger,s }&\longrightarrow \tilde\bfA^{\dagger,ps},
\end{align*}
for $r,s\gg 0$ and in the limit endomorphisms of the rings
\[\bfA^\dagger,\bfB^\dagger,\bfB^\dagger_{\rig},\tilde \bfB^\dagger,\tilde\bfB^\dagger_{\rig}.\]
These homomorphisms will be denoted by $\phi$ and commute with the action of $\Gamma$, resp.~$G_K$.

\begin{rem}\label{geomabsolute} Let us points out that some of the above rings have a geometric interpretation. We write $\boldB$ for the closed unit disc over $K_0$ and $\Ubb\subset \boldB$ for the open unit disc. Then 
\begin{align*}
\bfA^{[r,s]}&=\Gamma(\boldB_{[p^{-1/r},p^{-1/s}]},\Ocal^+_{\boldB}), & \bfB^{[r,s]} &=\Gamma(\boldB_{[p^{-1/r},p^{-1/s}]},\Ocal_{\boldB}),\\
\bfA^{\dagger,s}&=\Gamma(\Ubb_{\geq p^{-1/s}},\Ocal^+_{\Ubb}), & \bfB^{\dagger,s}_{\rm rig }&=\Gamma(\Ubb_{\geq p^{-1/s}},\Ocal_{\Ubb}),
\end{align*}
where $\boldB_{[a,b]}\subset \boldB$ is the subspace of inner radius $a$ and outer radius $b$ and $\Ubb_{\geq a}\subset \Ubb$ is the subspace of inner radius $a$. 
\end{rem}

\subsection {Sheafification}
Let $X$ be an adic space locally of finite type over $\Q_p$. Recall that $X$ comes along with a sheaf $\Ocal^+_X\subset \Ocal_X$ of open and integrally closed subrings.

Let $A^+$ be a reduced $\Z_p$-algebra topologically of finite type. Recall that for $i\geq 0$ the completed tensor products
\[A^+\widehat{\otimes}_{\Z_p}W_i(\tilde\bfE^+)\ \text{and}\ A^+\widehat{\otimes}_{\Z_p}W_i(\tilde\bfE)\]
are the completions of the ordinary tensor product for the topology that is given by the discrete topology on $A^+/p^iA^+$ and by the natural topology on $W_i(\tilde\bfE^+)$ resp.~$W_i(\tilde{\bfE})$, see \cite[8.1]{families}.

Let $X$ be a \emph{reduced} adic space locally of finite type over $\Q_p$. As in \cite[8.1]{families} we can define sheaves $\tilde{\mathscr{E}}_X^+$, $\tilde{\mathscr{E}}_X$,  $\tilde\Acal_X^+$ and $\tilde\Acal_X$ by demanding
\begin{align*}
\Gamma(\Spa(A,A^+),\tilde{\mathscr{E}}^+_X)&=A^+\widehat{\otimes}_{\Z_p}\tilde{\bfE}^+,\\
\Gamma(\Spa(A,A^+),\tilde{\mathscr{E}}_X)&=A^+\widehat{\otimes}_{\Z_p}\tilde{\bfE},\\
\Gamma(\Spa(A,A^+),\tilde\Acal_X^+)&=\lim\limits_{\longleftarrow\ i} A^+\widehat{\otimes}_{\Z_p}W_i(\tilde{\bfE}^+),\\
\Gamma(\Spa(A,A^+),\tilde\Acal_X)&=\lim\limits_{\longleftarrow\ i} A^+\widehat{\otimes}_{\Z_p}W_i(\tilde{\bfE}),
\end{align*}
for an affinoid open subset $\Spa(A,A^+)\subset X$.

We define the sheaf $\Acal_{X,K}$ to be the $p$-adic completion of $(\Ocal_X^+\otimes_{\Z_p}W)((T))$. Further we set $\Bcal_{X,K}=\Acal_{X,K}[1/p]$.

Let $A^+$ be as above and $A=A^+[1/p]$. We define 
\[A^+\widehat{\otimes}_{\Z_p}\bfA^{[r,s]}\ \text{and}\ A^+\widehat{\otimes}_{\Z_p}\tilde\bfA^{\dagger,s}\]
to be the completion of the ordinary tensor product for the $p$-adic topology on $A^+$ and the natural topology on $\bfA^{[r,s]}$ resp.~$\tilde\bfA^{\dagger,s}$. These completed tensor products can be viewed as subrings of $\Gamma(\Spa(A,A^+),\tilde\Acal_{\Spa(A,A^+)})$. For a reduced adic space $X$ locally of finite type over $\Q_p$, we define the sheaves $\Acal_X^{[r,s]}$ and $\tilde\Acal_X^{\dagger,s}$ by demanding
\begin{align*}
\Gamma(\Spa(A,A^+),\Acal_X^{[r,s]})&=A^+\widehat{\otimes}_{\Z_p}\bfA^{[r,s]},\\
\Gamma(\Spa(A,A^+),\tilde\Acal_X^{\dagger, s})&=A^+\widehat{\otimes}_{\Z_p}\tilde\bfA^{\dagger,s},
\end{align*}
for an open affinoid $\Spa(A,A^+)\subset X$. Similarly we define the sheaf $\tilde\Bcal_X^{\dagger,s}$.
Finally, as in the case above, we can use these sheaves to define the sheafified versions of $(\ref{periodrings})$ and $(\ref{tilderings})$.
\begin{equation}\label{tildersheaves}
\begin{aligned}
\tilde\Bcal_{X,\rig}^{\dagger,s}&=\text{Frechet completion of}\ \tilde\Bcal_X^{\dagger, s}\ \text{for the valuations}\ w_{s'},\,s'\geq s,\\
\tilde\Bcal_X^{\dagger}&=\lim\nolimits_\rightarrow \tilde\Bcal_X^{\dagger,s},\\ \tilde\Bcal^{\dagger}_{X,\rig}&=\lim\nolimits_\rightarrow \tilde\Bcal^{\dagger,s}_{X,\rig}.
\end{aligned}
\end{equation}
\begin{equation}\label{periosheaves}
\begin{aligned}
\Bcal_X^{[r,s]}&=\Acal_X^{[r,s]}[1/p],  & \Acal_X^{\dagger,r}&=\lim\nolimits_\leftarrow \Acal_X^{[r,s]},\\
\Bcal_X^{\dagger,s}&=\Bcal_{X,K}\cap \tilde\Bcal_X^{\dagger,s},  & \Bcal^{\dagger,r}_{X,\rig}&=\lim\nolimits_\leftarrow \Bcal_X^{[r,s]},\\ 
\Acal_X^{\dagger}&=\Acal_{X,K}\cap\Bcal_X^\dagger, & \Bcal_X^{\dagger}&=\lim\nolimits_\rightarrow\Bcal_X^{\dagger,s},\\ 
\Bcal^{\dagger}_{X,\rig}&=\lim\nolimits_\rightarrow \Bcal^{\dagger,s}_{X,\rig}.  
\end{aligned}
\end{equation}

Note that all the rational period rings (i.e. those period rings in which $p$ is inverted) can also be defined on a non-reduced space $X$ by locally embedding the space into a reduced space $Y$ and restricting the corresponding period sheaf from $Y$ to $X$,
compare \cite[8.1]{families}.
\begin{rem}
As in the absolute case there is a geometric interpretation of some of these sheaves of period rings. 
\begin{align*}
\Acal_X^{[r,s]}&=\pr_{X,\ast}\big(\Ocal^+_{X\times \boldB_{[p^{-1/r},p^{-1/s}]}}\big), & \Bcal_X^{[r,s]} &=\pr_{X,\ast}\big(\Ocal_{X\times \boldB_{[p^{-1/r},p^{-1/s}]}}\big),\\
\Acal_X^{\dagger,s}&=\pr_{X,\ast}\big(\Ocal^+_{X\times \Ubb_{\geq p^{-1/s}}}\big), & \Bcal^{\dagger,s}_{X,\rm rig }&=\pr_{X,\ast}\big(\Ocal_{X\times \Ubb_{\geq p^{-1/s}}}\big).
\end{align*}
Here $\pr_X$ denotes the projection from the product to $X$. Especially we can define the sheaves $\Acal_X^{[r,s]}$, $\Acal_X^{\dagger,s}$ and $\Acal_X^\dagger$ on every adic spaces, not only on reduced spaces.
\end{rem}

By construction all the sheaves $\tilde{\Rscr}_X$ (i.e. those of the period sheaves with a tilde) are endowed with a continuous $\Ocal_X$-linear $G_K$-action and an endomorphism $\phi$ commuting with the Galois action.
The sheaves $\Rscr_X$ (i.e. those period rings without a tilde) are endowed with a continuous $\Gamma$-action and an endomorphism $\phi$ commuting with the action of $\Gamma$. 

In the following we will use the notation $X(\bar\Q_p)$ for the set of rigid analytic points of an adic space $X$ locally of finite type over $\Q_p$, i.e. $X(\bar\Q_p)=\{x\in X\mid k(x)/\Q_p\ \text{finite}\}$.
\begin{prop} Let $X$ be a reduced adic space locally of finite type over $\Q_p$ and let ${\bf R}$ be any of the integral period rings {\rm (}i.e. a period ring in which $p$ is not inverted{\rm )} defined above. Let $\Rscr_X$ be the corresponding sheaf of period rings on $X$.\\
\noindent {\rm (i)} The canonical map
\[\Gamma(X,\Rscr_X)\longrightarrow \prod_{x\in X(\bar\Q_p)} k(x)^+\otimes_{\Z_p}{\bf R}\]
is an injection.\\
\noindent {\rm (ii)} Let ${\bf R}'\subset {\bf R}$ be another integral period ring with corresponding sheaf of period rings $\Rscr'_X\subset\Rscr_X$ and let $f\in \Rscr_X$. Then $f\in\Rscr'_X$ if and only if 
\[f(x)\in k(x)^+\otimes_{\Z_p}{\bf R}'\subset k(x)^+\otimes{\bf R}\]
for all rigid analytic points $x\in X$.
\end{prop}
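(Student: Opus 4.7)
The plan is to reduce both statements to the affinoid case and to bootstrap from the classical injection $A^+\hookrightarrow\prod_x k(x)^+$ for a reduced affinoid algebra $A$ topologically of finite type over $\Q_p$. For (i), since $\Rscr_X$ is a sheaf and every rigid point of an affinoid open of $X$ is already a rigid point of $X$, it suffices to treat $X=\Spa(A,A^+)$ affinoid. The rigid analytic Nullstellensatz (Jacobson property) provides the classical injection $A\hookrightarrow\prod_x k(x)$, which restricts to $A^+\hookrightarrow\prod_x k(x)^+$ because $A^+$ is the subring of elements of spectral norm $\leq 1$. The same supremum-seminorm argument applied to $a/p^n$ yields the stronger statement $A^+/p^n A^+\hookrightarrow\prod_x k(x)^+/p^n$ for every $n\geq 0$: if $a(x)\in p^n k(x)^+$ at each rigid point, then $|a|_{\sup}\leq|p|^n$, so $a/p^n\in A^+$.

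Next I would propagate this injection through the completed tensor product $\Rscr_X(\Spa(A,A^+))=A^+\widehat{\otimes}_{\Z_p}{\bf R}$. Writing ${\bf R}=\lim_n{\bf R}/J_n$ for a descending filtration -- the $p$-adic one in the mixed-characteristic cases, and the $\mathrm{val}_{\bfE}$-filtration for the characteristic $p$ rings $\tilde{\bfE}^+$ and $\tilde{\bfE}$ -- each ${\bf R}/J_n$ is flat over $\Z/p^{a_n}$ or $\Fbb_p$, so tensoring preserves the injection on $A^+/p^{a_n}$. Since each $k(x)^+$ is a finite free $\Z_p$-module, $k(x)^+\otimes_{\Z_p}{\bf R}$ is automatically $p$-adically complete and no completion is needed on the right. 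Taking the inverse limit over $n$ yields (i).

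For (ii), the ``only if'' direction is immediate from naturality of restriction to a point. For the converse I would again reduce to affinoids and argue that $A^+\widehat{\otimes}{\bf R}'\subset A^+\widehat{\otimes}{\bf R}$ is saturated by pointwise values: given $f\in A^+\widehat{\otimes}{\bf R}$ with $f(x)\in k(x)^+\otimes{\bf R}'$ for every rigid $x$, reducing modulo each $J_n$ reduces the problem to the analogous statement at finite level, which follows from (i) applied to ${\bf R}'/({\bf R}'\cap J_n)\hookrightarrow{\bf R}/J_n$ together with $\Z_p$-flatness. Passing to the limit, and using that $A^+\widehat{\otimes}{\bf R}'$ is a topologically closed subring of $A^+\widehat{\otimes}{\bf R}$, yields $f\in\Rscr'_X$.

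The hardest step is to check uniformly, for each of the inclusions ${\bf R}'\subset{\bf R}$ in the paper's list, that ${\bf R}'$ sits as a topologically closed subring of ${\bf R}$ and that the quotients ${\bf R}/J_n$ and ${\bf R}'/({\bf R}'\cap J_n)$ are flat over the appropriate $\Z/p^{a_n}$ or $\Fbb_p$. The various integral period rings carry rather different topologies (Fr\'echet-type for the $\tilde{\bf A}^{\dagger,s}$ family versus purely $p$-adic for $\tilde{\bf A}^+$, $\tilde{\bf A}$, $\bfA_K$), so closedness must be checked case by case -- typically via the identifications ${\bf R}'=({\bf R})^{H_K}$ from Section~2 or as intersections of closed subrings -- at which point the sheaf-theoretic argument runs through without further difficulty.
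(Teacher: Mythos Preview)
The paper does not give a self-contained proof here: it simply says the argument is ``along the same lines as \cite[Lemma 8.2]{families} and \cite[Lemma 8.6]{families}.'' Your plan---reduce to an affinoid $\Spa(A,A^+)$, use the sup-norm characterisation of $A^+$ to get $A^+/p^n\hookrightarrow\prod_x k(x)^+/p^n$, then propagate through the completed tensor product via flatness of the successive quotients of ${\bf R}$ and pass to the inverse limit---is the standard argument and is almost certainly what those cited lemmas do, so your approach agrees with the paper's.

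One small refinement worth recording: for the rings $\tilde\bfA^{\dagger,s}$ (and the sheaves built from them) the ``natural topology'' used in the completed tensor product is governed by the valuation $w_s$, not purely by the $p$-adic filtration, so the filtration $J_n$ you invoke should really be a cofiltration by both $p$ and the $w_s$-grading (equivalently: first reduce modulo $p^i$, obtaining a ring carrying the residual $\mathrm{val}_{\bfE}$-type filtration, and then run the characteristic-$p$ argument at each level). You already flag this case-by-case verification as the delicate step, and once it is done the rest of your outline goes through without change.
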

\begin{proof}
This is proven along the same lines as \cite[Lemma 8.2]{families} and \cite[Lemma 8.6]{families}.
\end{proof}

\begin{cor}
Let $X$ be an adic space locally of finite type over $\Q_p$, then
\begin{align*}
\big(\tilde\Bcal_{X,\rig}^\dagger\big)^{\phi=\id}&=\Ocal_X & \big(\tilde\Bcal_{X,\rig}^\dagger\big)^{H_K}&=\Bcal^\dagger_{X,\rig},\\
\big(\tilde\Bcal_{X}^\dagger\big)^{\phi=\id}&=\Ocal_X & \big(\tilde\Bcal_{X}^\dagger\big)^{H_K}&=\Bcal^\dagger_{X}.
\end{align*}
\end{cor}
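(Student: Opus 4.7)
The plan is to verify each of the four identities pointwise at rigid analytic points, reducing the relative statements to their classical absolute counterparts. The absolute inputs are the well-known Fontaine--Colmez--Berger identities $(\tilde\bfB_{\rig}^\dagger)^{\phi=\id}=(\tilde\bfB^\dagger)^{\phi=\id}=\Q_p$, $(\tilde\bfB_{\rig}^\dagger)^{H_K}=\bfB_{\rig}^\dagger$ and $(\tilde\bfB^\dagger)^{H_K}=\bfB^\dagger$, all of which are standard.

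First I would extend Proposition 2.1 from integral to rational period sheaves. Concretely, for $\Rscr_X\in\{\tilde\Bcal_{X,\rig}^\dagger,\tilde\Bcal_X^\dagger\}$ I want the injectivity of the evaluation map $\Gamma(X,\Rscr_X)\to\prod_{x\in X(\bar\Q_p)}k(x)\otimes_{\Q_p}\bfR$, together with the pointwise subsheaf criterion: a section $f\in\Rscr_X$ lies in the subsheaf $\Rscr'_X\subset\Rscr_X$ iff $f(x)\in k(x)\otimes_{\Q_p}\bfR'$ for every rigid analytic point $x$. Both statements follow from the integral Proposition 2.1 by clearing denominators: locally on an affinoid, a global section $f$ can be written as $p^{-n}g$ for some $n\geq 0$ with $g$ lying in the corresponding integral subsheaf, and the two claims for $f$ reduce to the known claims for $g$.

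With this rational analogue in hand, the $\phi=\id$ identity is proved as follows. The inclusion $\Ocal_X\subset(\tilde\Bcal_{X,\rig}^\dagger)^{\phi=\id}$ is immediate from the construction, because $\phi$ acts trivially on the $\Ocal_X^+$-factor of the completed tensor product $A^+\widehat\otimes_{\Z_p}W_i(\tilde\bfE)$. Conversely, let $f\in\tilde\Bcal_{X,\rig}^\dagger$ satisfy $\phi(f)=f$. Since evaluation at a rigid point is compatible with the Frobenius, the element $f(x)\in k(x)\otimes_{\Q_p}\tilde\bfB_{\rig}^\dagger$ is fixed by $1\otimes\phi$ and hence lies in $k(x)\otimes_{\Q_p}(\tilde\bfB_{\rig}^\dagger)^{\phi=\id}=k(x)$. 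The pointwise criterion, applied to the inclusion $\Ocal_X\subset\tilde\Bcal_{X,\rig}^\dagger$, then forces $f\in\Ocal_X$. The argument for $(\tilde\Bcal_{X,\rig}^\dagger)^{H_K}=\Bcal_{X,\rig}^\dagger$ is formally identical, using the absolute identity $(\tilde\bfB_{\rig}^\dagger)^{H_K}=\bfB_{\rig}^\dagger$ together with the pointwise criterion applied to the inclusion $\Bcal_{X,\rig}^\dagger\subset\tilde\Bcal_{X,\rig}^\dagger$. The two statements for $\tilde\Bcal_X^\dagger$ follow by replacing ``rig'' with its absence throughout.

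The main obstacle I expect is the rational extension of Proposition 2.1, together with the compatibility of the evaluation-at-a-point map with the $\phi$- and $H_K$-actions in the presence of the completed and Fr\'echet-completed tensor products entering $\tilde\Bcal_{X,\rig}^{\dagger,s}$ and $\tilde\Bcal_X^{\dagger,s}$. In particular, to make the identifications $(k(x)\otimes_{\Q_p}\tilde\bfB_{\rig}^\dagger)^{\phi=\id}=k(x)$ and $(k(x)\otimes_{\Q_p}\tilde\bfB_{\rig}^\dagger)^{H_K}=k(x)\otimes_{\Q_p}\bfB_{\rig}^\dagger$ rigorous one must use the flatness of $k(x)/\Q_p$, the fact that $k(x)$ is finite-dimensional over $\Q_p$, and the continuity of $\phi$ and of the $H_K$-action for the canonical Fr\'echet topology. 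These are essentially the same bookkeeping steps as in \cite[Lemma 8.2, Lemma 8.6]{families}, so I would expect them to go through with care.
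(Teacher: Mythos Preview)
Your approach for reduced $X$ is essentially the paper's: reduce to pointwise verification at rigid analytic points via (the rational analogue of) Proposition~2.1, then invoke the classical absolute identities. The paper compresses this into ``follows from the above by chasing through the definitions.''

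There is, however, a genuine gap. The Corollary is stated for an \emph{arbitrary} adic space locally of finite type over $\Q_p$, whereas Proposition~2.1, on which your entire argument rests, is only available for \emph{reduced} $X$ (indeed, the integral period sheaves $\tilde\Acal_X^{\dagger}$ etc.\ are only defined in the reduced case). Your ``clear denominators and apply Proposition~2.1'' step therefore has no content when $X$ is non-reduced. The paper treats this case separately: locally on $X$ one chooses a finite morphism to a polydisc $Y$ (which is reduced) and studies the $\phi$- resp.\ $H_K$-invariants in the fibers over the rigid analytic points of $Y$, citing \cite[Corollary~8.4, Corollary~8.8]{families}. Without this extra step your proof establishes the Corollary only under the additional hypothesis that $X$ is reduced.
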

\begin{proof}
If the space is reduced this follows from the above by chasing through the definitions. Otherwise we can locally on $X$ choose a finite morphism to a reduced space $Y$ (namely a polydisc) and study the $\phi$- resp.~$H_K$-invariants in the fibers over the rigid analytic points of $Y$, compare \cite[Corollary 8.4, Corollary 8.8]{families}
\end{proof}
\begin{rem}
Let $X$ be an adic space locally of finite type and $\mathscr{R}$ be any of the sheaves of topological rings defined above. If $x\in X$ is a point then we will sometimes write $\mathscr{R}_x$ for the completion of the fiber $\mathscr{R}\otimes k(x)$ of $\mathscr{R}$ at $x$ with respect to the canonical induced topology. 
\end{rem}
\section{Coherent $\Ocal_X^+$-modules and lattices}

As the notion of being \'etale is defined by using lattices we make precise what we mean by (families of) lattices.

Let $X$ be an adic space locally of finite type over $\Q_p$. The space $X$ is endowed with a structure sheaf $\Ocal_X$ and a sheaf of open and integrally closed subrings $\Ocal_X^+\subset \Ocal_X$ consisting of the power bounded sections of $\Ocal_X$. Recall that for any ringed space, there is the notion of a coherent module, see \cite[5.3]{EGA0}. 

\begin{defn}
Let $X$ be an adic space (locally of finite type over $\Q_p$) and let $E$ be a sheaf of $\Ocal_X^+$-modules on $X$.\\
\noindent (i) The $\Ocal_X^+$ module $E$ is called \emph{of finite type} or \emph{finitely generated}, if there exist an open covering $X=\bigcup_{i\in I} U_i$ and  for all $i\in I$ exact sequences
\[(\Ocal^+_{U_i})^{d_i}\longrightarrow E|_{U_i}\longrightarrow 0.\] 
\noindent (ii) The module is called \emph{coherent}, if it is of finite type and for any open subspace $U\subset X$ the kernel of any morphism $(\Ocal_U^+)^d\rightarrow E|_U$ is of finite type. 
\end{defn}
\begin{rem}
Let $X$ be a reduced adic space locally of finite type over $\Q_p$. Then locally on $X$ the sections $\Gamma(X,\Ocal_X)$ as well as $\Gamma(X,\Ocal_X^+)$ are noetherian rings. Hence the notion of a coherent $\Ocal_X^+$-module and the notion of an $\Ocal_X^+$-module of finite type coincide for these spaces. Especially, an $\Ocal_X^+$-module which is locally associated with a module of finite type is coherent. 
\end{rem}
\begin{rem}
The same definition of course also applies to the sheaves of period rings that we defined above. 
\end{rem}
Let  $X=\Spa(A,A^+)$ be an affinoid adic space. Then any finitely generated $A^+$-module $M$ defines a coherent sheaf of $\Ocal_X^+$-modules $E$ by the usual procedure  
\[\Gamma(\Spa(B,B^+),E)=M\otimes_{A^+}B^+\]
for an affinoid open subspace $\Spa(B,B^+)\subset X$. 

On the other hand it is not true that all coherent $\Ocal_X^+$-modules on an affinoid space arise in that way, as shown by the following example. The reason is that the cohomology $H^1(X,E)$ of coherent $\Ocal_X^+$-sheaves does not necessarily vanish on affinoid spaces.
\begin{expl}
Let $X=\Spa(\Q_p\langle T\rangle,\Z_p\langle T\rangle)$ be the closed unit disc. Let 
\begin{align*}
U_1&=\{x\in X\mid |x|\leq |p|\} \\ U_2&=\{x\in X\mid |p|\leq |x|\leq 1\}.
\end{align*}
Define the $\Ocal_X^+$-sheaf $E_1\subset \Ocal_X$ by glueing $\Ocal_{U_1}^+$ and $p^{-1}T\Ocal_{U_2}^+$ over $U_1\cap U_2$ and $E_2\subset \Ocal_X$ by glueing $\Ocal_{U_1}^+$ and $pT^{-1}\Ocal_{U_2}$. 
Then $E_1$ and $E_2$ are coherent $\Ocal_X^+$-modules. We have 
\begin{align*}
\Gamma(X,E_1)&=(1,p^{-1}T)\Gamma(X,\Ocal_X^+),\\
\Gamma(X,E_2)&=p\Gamma(X,\Ocal_X^+).
\end{align*}
Especially $E_2$ is not generated by global sections.
If $\Xcal=\Ucal_1\cup\Ucal_2\cong \widehat{\Pbb}^1_{\Z_p}\cup\widehat{\Abb}^1_{\Z_p}$ is the canonical formal model of $X=U_1\cup U_2$, then $E_2$ is defined by the coherent $\Ocal_\Xcal$-sheaf which is trivial on the formal affine line and which is the twisting sheaf $\Ocal(1)$ on the formal projective line, while $E_1$ is defined by the dual of the twisting sheaf $\Ocal(-1)$ on the formal projective line.
\end{expl}

Let $X$ be an adic space of finite type over $\Q_p$ (especially $X$ is quasi-compact) and $E$ be a coherent $\Ocal_X^+$-module on $X$. As $E$ is not necessarily associated to an $A^+$-module on an affinoid open $\Spa(A,A^+)\subset X$, the sheaf $E$ does not necessarily have a model $\Ecal$ over any formal model $\Xcal$ of $X$: The sheaf $\Ucal\mapsto\Gamma(\Ucal^{\ad},E)$ does not define $E$ in the generic fiber in general.
However there is a covering $X=\bigcup U_i$ of $X$ by finitely many open affinoids such that $E|_{U_i}$ is the sheaf defined by the finitely generated $\Gamma(U_i,\Ocal_X^+)$-module $\Gamma(U_i,E)$.
Hence there is a formal model $\Xcal$ of $X$ such that $E$ is defined by a coherent $\Ocal_\Xcal$-modules $\Ecal$. Namely $\Xcal$ is a formal model on which one can realize the covering $X=\bigcup U_i$ as a covering by open formal subschemes.

Let $E$ be a coherent $\Ocal_X^+$-module on an adic space $X$ and let $x\in X$. Let $\mfrak_x\subset \Ocal_{X,x}$ denote the maximal ideal of function vanishing at $x$ and write $\mfrak_x^+=\mfrak_x\cap\Ocal_{X,x}^+$, i.e. $\Ocal_{X,x}^+/\mfrak_x^+=k(x)^+$ is the integral subring of $k(x)$. 
We write $E\otimes k(x)^+$ for the fiber of $E$ at $x$, that is for the quotient of the $\Ocal_{X,x}^+$-module 
\[E_x=\lim\limits_{\substack{\longrightarrow\\ U\ni x}} \Gamma(U,E)\]
by the ideal $\mfrak_x^+$. 

Let $\Xcal$ be a formal model of $X$ and $\Ecal$ be a coherent $\Ocal_\Xcal$-module defining $E$ in the generic fiber. Further let $\Spf k(x)^+\hookrightarrow \Xcal$ denote the morphism defining $x$ in the generic fiber.
Then $\Ecal\otimes k(x)^+=E\otimes k(x)^+.$ If we write $\bar\Xcal$ for the special fiber of $\Xcal$ and $\bar \Ecal$ for the restriction of $\Ecal$ to $\bar\Xcal$ and if $x_0\in\bar\Xcal$ denotes the specialization of $x$, then it follows that
\[\bar\Ecal\otimes k(x_0)=(\Ecal\otimes k(x)^+)\otimes_{k(x)^+}k(x_0)=(E\otimes k(x)^+)\otimes_{k(x)^+}k(x_0).\] 

\begin{defn}
Let $E$ be a vector bundle of rank $d$ on an adic space $X$, locally of finite type over $\Q_p$.  A \emph{lattice} in $E$ is a coherent $\Ocal_X^+$-submodule $E^+\subset E$ which is locally on $X$ free of rank $d$ and which generates $E$, i.e. the inclusion induces an isomorphism 
\[E^+\otimes_{\Ocal_X^+}\Ocal_X\cong E.\]
\end{defn}

\begin{lem}\label{latticeiftorsfree}
Let $X$ be an adic space of finite type over $\Q_p$ and let $E^+$ be a finitely generated $\Ocal_X^+$-submodule of $\Ocal_X^d$ which contains a lattice of $\Ocal_X^d$. Then $E^+$ is a lattice if and only if $E^+\otimes k(x)^+$ is $\varpi_x$-torsion free for all rigid points $x\in X$, where $\varpi_x\in k(x)^+$ is  a uniformizer.
\end{lem}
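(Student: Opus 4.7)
For the forward direction: if $E^+$ is locally free of rank $d$, then at any rigid point $x$ the fibre $E^+\otimes k(x)^+$ is free of rank $d$ over the DVR $k(x)^+$, hence in particular $\varpi_x$-torsion free.

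For the converse, first observe that the hypothesis that $E^+$ contains a lattice of $\Ocal_X^d$ forces $E^+[1/p]=\Ocal_X^d$, and consequently $E^+\otimes k(x)=k(x)^d$ at every rigid point $x$. Thus the fibre $E^+\otimes k(x)^+$ is a finitely generated torsion-free module over the DVR $k(x)^+$ whose generic fibre has rank $d$, and is therefore free of rank $d$ over $k(x)^+$. The plan is then to propagate this rigid-point information to a genuine local-freeness statement on $X$ via a formal model. Choose a formal model $\Xcal$ of $X$ together with a coherent $\Ocal_\Xcal$-submodule $\Ecal\subset\Ocal_\Xcal^d$ whose generic fibre is the inclusion $E^+\subset\Ocal_X^d$. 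Every closed point $x_0\in\bar\Xcal$ of the special fibre is the specialisation of some rigid point $x\in X$, and combining the fibre identity $\bar\Ecal\otimes k(x_0)=(E^+\otimes k(x)^+)\otimes_{k(x)^+}k(x_0)$ recalled in the text with the freeness just established yields $\dim_{k(x_0)}\bar\Ecal\otimes k(x_0)=d$ at every closed point of $\bar\Xcal$.

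The core local argument is as follows. On an affine piece $\Spf A^+\subset\Xcal$, $\Ecal$ corresponds to a finitely generated submodule $M\subset (A^+)^d$ with $M[1/p]=A^d$. For a maximal ideal $\mfrak\subset A^+$ corresponding to a closed point $x_0\in\bar\Xcal$, lift a basis of $M\otimes k(x_0)=k(x_0)^d$ to $d$ elements $e_1,\dots,e_d\in M_\mfrak$. By Nakayama these generate $M_\mfrak$, giving a surjection $\phi\colon(A^+_\mfrak)^d\twoheadrightarrow M_\mfrak$. After inverting $p$, $\phi[1/p]$ is a surjection of free rank-$d$ $A_\mfrak$-modules (where $A_\mfrak=A^+_\mfrak[1/p]$) and is therefore an isomorphism. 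Since $A^+_\mfrak$ is $p$-torsion free, it embeds into $A_\mfrak$, so $\ker\phi$ injects into $\ker(\phi[1/p])=0$. Hence $\phi$ is an isomorphism and $M_\mfrak$ is free of rank $d$.

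Since local freeness of a coherent sheaf is an open condition on a noetherian scheme and $\bar\Xcal$ is Jacobson (being of finite type over $\Fbb_p$), the open locus in $\bar\Xcal$ where $\Ecal$ is locally free of rank $d$ contains every closed point and therefore equals all of $\bar\Xcal$. Passing to the generic fibre, $E^+$ is locally free of rank $d$ on $X$, which together with $E^+[1/p]=\Ocal_X^d$ means $E^+$ is a lattice. The main subtle input is the assertion that every closed point of $\bar\Xcal$ arises as the specialisation of a rigid point of $X$; this is standard for formal schemes of finite type over $\Spf\Z_p$, and it is the precise reason why checking torsion-freeness of fibres only at rigid points suffices to control the entire family.
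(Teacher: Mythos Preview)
Your proof is correct and follows essentially the same strategy as the paper: pass to a formal model, use the torsion-freeness hypothesis to see that $E^+\otimes k(x)^+$ is free of rank $d$, deduce that the special-fibre sheaf has $d$-dimensional fibres at every closed point, and then use Nakayama together with the fact that $E^+[1/p]=\Ocal_X^d$ is free of rank $d$ to conclude local freeness. The only packaging difference is that the paper invokes ``reduced scheme $+$ constant fibre dimension $\Rightarrow$ vector bundle'' on $\bar\Xcal$ and then lifts generators, whereas you localise at each maximal ideal of $A^+$ and argue directly; your phrasing ``the open locus in $\bar\Xcal$ where $\Ecal$ is locally free'' is slightly imprecise (you have shown freeness of $M_\mfrak$ over $A^+_\mfrak$, so the relevant open lives in $\Spf A^+$, whose underlying space is $\Spec(A^+/p)$), but the Jacobson argument goes through as intended.
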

\begin{proof}
As $E^+$ is finitely generated, it is in fact coherent. Choose a formal model  $\Xcal$ and a coherent sheaf  $\Ecal^+$ on $\Xcal$ which is a model for $E^+$. We may assume that $\Ecal^+$ has no $p$-power torsion.\\
Let ${\rm sp}:X\rightarrow \bar \Xcal$ denote the specialization map to the reduced special fiber $\bar\Xcal$ of $\Xcal$. Further we write $\bar \Ecal^+$ for the restriction of $\Ecal^+$ to $\bar\Xcal$.
 Let $x_0\in \bar \Xcal$ be a closed point and let $x\in X$ be a rigid analytic point (i.e. $k(x)$ is a finite extension of $\Q_p$) with ${\rm sp}(x)=x_0$. Then we have 
\[\bar\Ecal^+\otimes k(x_0)=(E^+\otimes k(x)^+)\otimes_{k(x)^+}k(x_0).\]
On the other hand, as $E^+\otimes k(x)^+$ has no $\varpi_x$-torsion,  the $k(x)^+$-module $E^+\otimes k(x)^+$ is a submodule of $\Ocal_X^d\otimes k(x)=k(x)^d$. Further it is finitely generated and contains a basis of $k(x)^d$. Hence it is freely generated by $d$ elements. It follows that $\bar\Ecal^+\otimes k(x_0)$ has dimension $d$. As $\bar \Xcal$ is reduced and $\bar\Ecal^+$ is a coherent sheaf it follows that it is a vector bundle. Locally on $\Xcal$ we can lift $d$ generators of $\bar\Ecal^+$ to $\Ecal^+$. By Nakayamas lemma these lifts generate $\Ecal^+$ and hence they also generate the $\Ocal_X^+$-module $E^+$. On the other hand $\Ocal_X^d=E^+[1/p]$ is free on $d$ generators. Hence the generators of $E^+$ do not satisfy any relations.  
\end{proof}

\section{$(\phi,\Gamma)$-modules over the relative Robba ring}
In this section we define certain families of $\phi$-modules that will appear in the context of families of Galois representations later on. The main results of this section are already contained in \cite[6]{families}.

\begin{defn} Let $X$ be an adic space and $\Rscr\in\{\Acal_{X,K}, \Acal_X^\dagger\}$.\\
An \'etale $\phi$-module over $\Rscr$ is a coherent $\Rscr$-module $N$ together with an isomorphism \[\Phi:\phi^\ast N\longrightarrow N\]
\end{defn}
\begin{defn} Let $X\in \Ad^{\lft}_{\Q_p}$ and \[\Rscr\in \{\Bcal_{X,K}, \Bcal_X^\dagger,\Bcal_{X,\rig}^\dagger\}.\]
Write $\Rscr^+\subset \Rscr$ for the corresponding integral subring\footnote{The integral subring of $\Bcal_{X,\rig}^\dagger$ is $\Acal_X^\dagger$.}.\\
\noindent (i) A $\phi$-module over $\Rscr$ is a locally free $\Rscr$-module $N$ together with an isomorphism $\Phi:\phi^\ast N\rightarrow N$.\\ 
\noindent (ii) A $\phi$-module over $\Rscr$ is called \emph{\'etale} if it is locally on $X$ induced from a locally free \'etale $\phi$-module over $\Rscr^+$. 
\end{defn}
Recall that $K_\infty$ is a fixed cyclotomic extension of $K$ and $\Gamma={\rm Gal}(K_\infty/K)$ denotes the Galois group of $K_\infty$ over $K$.
\begin{defn}
Let $X\in\Ad_{\Q_p}^{\lft}$ and $\Rscr$ be any of the sheaves of rings defined above. \\
\noindent (i) A $(\phi,\Gamma)$-module over $\Rscr$ is a $\phi$-module over $\Rscr$ together with a continuous semi-linear action of $\Gamma$ commuting with the semi-linear endomorphism $\Phi$.\\
\noindent (ii) A $(\phi,\Gamma)$-module over $\Rscr$ is called \'etale if its underlying $\phi$-module is \'etale.
\end{defn}
\subsection{The \'etale locus}
If $X$ is an adic space (locally of finite type over $\Q_p$) and $x\in X$ is any point, we will write $\iota_x:x\rightarrow X$ for the inclusion of $x$. 
If $\Rscr$ is any of the sheaf of topological rings above and if $\Ncal$ is a sheaf of $\Rscr_X$-modules on $X$, we write \[\iota_x^\ast\Ncal=\iota_x^{-1}\Ncal\otimes _{\Rscr_X}\Rscr_x\]
for the pullback of $\Ncal$ to the point $x$.
The following result is a generalization of \cite[Theorem 7.4]{KedlayaLiu} to the category of adic spaces. 
\begin{theo}
Let $X$ be a reduced adic space locally of finite type over $\Q_p$ and $\Ncal$ be a family of $(\phi,\Gamma)$-modules over $\Bcal_{X,\rig}^\dagger$. \\
\noindent {\rm (i)} The set 
\[X^{\rm int}=\{x\in X\mid \iota_x^\ast\Ncal\ \text{is \'etale}\}\subset X\]
is open.\\
\noindent {\rm (ii)} There exists a covering $X^{\rm int}=\bigcup U_i$ and locally free \'etale $\Acal_{U_i}^\dagger$-modules $N_i\subset \Ncal|_{U_i}$ which are stable under $\Phi$ such that 
\[N_i\otimes_{\Acal_{U_i}^\dagger}\Bcal_{U_i,\rig}^\dagger=\Ncal|_{U_i},\] i.e. $\Ncal|_{X^{\rm int}}$ is \'etale. 
\end{theo}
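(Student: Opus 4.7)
My plan is to bootstrap from the classical rigid analytic version of this result, \cite[Theorem 7.4]{KedlayaLiu}, to the adic space setting. The statement is local on $X$, so we may assume $X=\Spa(A,A^+)$ is an affinoid of finite type over $\Q_p$.

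The first step is to restrict $\Ncal$ to the set of classical points $X(\bar\Q_p)\subset X$, viewed as a classical rigid analytic variety. Applying the theorem of Kedlaya-Liu produces an admissible rigid analytic open $V\subset X(\bar\Q_p)$ -- the locus of classical $x$ where $\iota_x^\ast\Ncal$ is \'etale -- covered by classical affinoids $V_i$ each carrying a locally free, $\Phi$-stable \'etale $\bfA^\dagger_{V_i}$-lattice $N_i\subset\Ncal|_{V_i}$ generating $\Ncal|_{V_i}$ over the Robba ring.

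The second step is to promote each $V_i$ to the associated open affinoid adic subspace $U_i\subset X$ and to extend $N_i$ to a coherent, $\Phi$-stable $\Acal_{U_i}^\dagger$-submodule $\widetilde N_i\subset \Ncal|_{U_i}$. Such an extension exists thanks to the geometric interpretation of $\Acal_X^{\dagger,s}$ as $\pr_{X,\ast}\Ocal^+_{X\times\Ubb_{\geq p^{-1/s}}}$ in the remark following~(\ref{periosheaves}): a coherent sub-$\bfA^{\dagger,s}$-module on a classical affinoid $V_i$ corresponds to a coherent subsheaf on the product with $\Ubb_{\geq p^{-1/s}}$, and the latter makes sense on $U_i\times\Ubb_{\geq p^{-1/s}}$. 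To check $\widetilde N_i$ really is an $\Acal_{U_i}^\dagger$-lattice on all of $U_i$ -- not only over $V_i$ -- I would invoke Lemma~\ref{latticeiftorsfree}: $\widetilde N_i$ is coherent, contains a lattice of $\Ncal|_{U_i}$, and its fiber at every classical point $x\in V_i\subset U_i$ is $\varpi_x$-torsion-free (it is free of full rank there by Kedlaya-Liu), so the lemma upgrades $\widetilde N_i$ to a locally free lattice on all of $U_i$. Setting $U=\bigcup_i U_i$, this produces an open adic subspace with $U\subset X^{\rm int}$ on which the $\widetilde N_i$ provide the \'etale lattices required by part~(ii).

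The final step, which I expect to be the main obstacle, is the reverse inclusion $X^{\rm int}\subset U$. For a classical $x\in X^{\rm int}$ this is immediate from Kedlaya-Liu, so the content is the non-classical case. Here I would argue via the upper semi-continuity of the Newton polygon of $\Phi$ along specializations in the adic space: if $\iota_x^\ast\Ncal$ is \'etale, i.e.\ all Frobenius slopes at $x$ vanish, then in a small adic neighborhood $W$ of $x$ the Newton polygon lies on or below the zero polygon; since the total slope is locally constant (determined by $\det\Ncal$) and equals zero, the slopes must be identically zero on $W$, so by Kedlaya's slope filtration theorem every classical fiber in $W$ is \'etale and hence lies in $V$, whence $W\subset U$ and $x\in U$. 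The delicate point is to set up this semi-continuity argument precisely on the adic space (in particular interacting well with non-classical points) and to ensure that the construction of the lattices $\widetilde N_i$ from Kedlaya-Liu on classical fibers really glues to a neighborhood of $x$; an alternative would be to argue by means of a formal model $\Xcal$ of $X$ and to study how Kedlaya-Liu's local construction behaves along the specialization map to the special fiber.
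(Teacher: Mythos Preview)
The paper does not give an independent proof here: it simply cites \cite[Corollary 6.11]{families} and remarks that the argument there carries over with the present choice of Frobenius. So there is no ``paper's approach'' to compare with beyond whatever is done in \cite{families}; your proposal is an attempt to reconstruct such an argument from \cite{KedlayaLiu}.

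Your step~2 rests on a misconception about the passage from rigid to adic spaces. If $V_i=\Sp(B)$ is a classical affinoid and $U_i=\Spa(B,B^\circ)$ is the associated adic affinoid, then the categories of coherent modules over $\Acal^{\dagger}_{V_i}$ and over $\Acal^{\dagger}_{U_i}$ are canonically equivalent: both are governed by the same ring of global sections. A locally free \'etale lattice $N_i$ produced by \cite{KedlayaLiu} over $V_i$ \emph{is already} a locally free \'etale $\Acal^{\dagger}_{U_i}$-lattice; there is nothing to extend, and no torsion-freeness verification is needed. In particular your appeal to Lemma~\ref{latticeiftorsfree} is both unnecessary and misplaced (that lemma concerns $\Ocal_X^+$-submodules of $\Ocal_X^d$, not $\Acal_X^\dagger$-modules). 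Once this is straightened out, the inclusion $\bigcup_i U_i\subset X^{\rm int}$ is immediate, since the existence of an \'etale $\Acal^\dagger_{U_i}$-lattice forces $\iota_x^\ast\Ncal$ to be \'etale at every point $x\in U_i$, classical or not.

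The genuine gap is in step~3, which you correctly flag as the crux but then leave open. The equivalence ``$\iota_x^\ast\Ncal$ \'etale $\Leftrightarrow$ pure of slope zero'' at a non-classical point $x$ requires Kedlaya's slope theory over the completed residue field $\widehat{k(x)}$, which is in general a nonarchimedean field with non-discrete value group; this is available but must be invoked explicitly. More importantly, the assertion that ``in a small adic neighborhood $W$ of $x$ the Newton polygon lies on or below the zero polygon'' is exactly the semi-continuity statement that does the work, and it is not a formal consequence of the classical rigid-analytic version: one has to prove that the locus where the Newton polygon lies on or above a given polygon is closed in the adic topology, i.e.\ detect it via specializations to all points, not just rigid ones. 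This is what is carried out in \cite[\S6]{families}. Without that input your plan is a correct outline of the shape of the argument, but the step that distinguishes the adic statement from the rigid one in \cite{KedlayaLiu} remains unproved.
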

\begin{proof}
This is \cite[Corollary 6.11]{families}. In loc.~cit.~we use a different Frobenius $\phi$. However the proof works verbatim in the case considered here. 
\end{proof}
\begin{theo}
Let $f:X\rightarrow Y$ be a morphism of reduced adic spaces locally of finite type over $\Q_p$. Let $N_Y$ be a family of $(\phi,\Gamma)$-modules over $\Bcal_{Y,\rig}^\dagger$ and write $N_X$ for its pullback over $\Bcal_{X,\rig}^\dagger$. Then $f^{-1}(Y^{\rm int})=X^{\rm int}$
\end{theo}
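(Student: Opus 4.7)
The plan is to prove $f^{-1}(Y^{\rm int}) = X^{\rm int}$ by establishing the two inclusions separately. The forward inclusion is essentially formal, using that étaleness of a $(\phi,\Gamma)$-module is preserved by pullback; the reverse inclusion uses Kedlaya's slope filtration to descend étaleness along the field extension $k(y)\hookrightarrow k(x)$ at each pair of points $x\mapsto y$.

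For the inclusion $f^{-1}(Y^{\rm int})\subseteq X^{\rm int}$, I would work globally rather than point-by-point. By the preceding theorem, part (ii), there is a covering $Y^{\rm int}=\bigcup V_j$ together with locally free $\Phi$- and $\Gamma$-stable étale lattices $\Nfrak_j\subset N_Y|_{V_j}$ over $\Acal_{V_j}^\dagger$. Pulling back yields coherent $\Acal_{f^{-1}(V_j)}^\dagger$-modules $f^\ast\Nfrak_j\subset N_X|_{f^{-1}(V_j)}$, still locally free of the same rank, on which Frobenius remains an isomorphism (since $\phi^\ast$ commutes with pullback) and on which the $\Gamma$-action is induced. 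Thus $N_X|_{f^{-1}(Y^{\rm int})}$ is étale, and in particular $\iota_x^\ast N_X$ is étale for every $x\in f^{-1}(Y^{\rm int})$, so $x\in X^{\rm int}$.

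For the reverse inclusion $X^{\rm int}\subseteq f^{-1}(Y^{\rm int})$, take $x\in X^{\rm int}$ and put $y=f(x)$. From the construction of the period sheaves as completed tensor products, the map $f$ induces a canonical identification
\[
\iota_x^\ast N_X \;\cong\; \iota_y^\ast N_Y \,\widehat{\otimes}_{\Bcal^\dagger_{Y,\rig,\,y}}\, \Bcal^\dagger_{X,\rig,\,x},
\]
where the ring map on the right is induced by the continuous embedding of complete non-archimedean fields $k(y)\hookrightarrow k(x)$. By hypothesis the left-hand side is étale, i.e.\ pure of slope zero in the sense of Kedlaya's slope filtration. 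Since the Newton polygon of a $\phi$-module over the Robba ring of a complete valued field is invariant under continuous extension of the base field, $\iota_y^\ast N_Y$ is itself pure of slope zero, hence étale, so $y\in Y^{\rm int}$.

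The main obstacle is the second inclusion: one needs to know that étaleness genuinely descends along a possibly very non-tame extension of complete non-archimedean fields. I expect the cleanest route is to invoke Kedlaya's slope theory (as already used in \cite{families} via \cite{Kedlaya}) rather than attempt to construct a descended lattice by hand; the content of the argument is the slope-invariance under continuous extensions of the ground field, once this is granted nothing further is required.
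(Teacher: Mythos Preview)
Your argument is correct, and since the paper itself does not give a proof here but simply cites \cite[Proposition~6.14]{families}, your sketch is in effect a reconstruction of what that reference does: the forward inclusion by pulling back \'etale lattices, the reverse by invoking Kedlaya's slope theory over the completed residue fields. This is the standard route and there is no essential gap.

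Two small corrections. First, the preceding theorem only asserts that the local lattices $N_i$ are $\Phi$-stable, not $\Gamma$-stable; you should drop that claim, though it is harmless since \'etaleness is a property of the underlying $\phi$-module alone. Second, in your pointwise identification the fields $k(y)$ and $k(x)$ need not be complete; the fibres $\Bcal^\dagger_{Y,\rig,y}$ and $\Bcal^\dagger_{X,\rig,x}$ are taken over the completions $\widehat{k(y)}\hookrightarrow\widehat{k(x)}$, and it is over these analytic fields that Kedlaya's equivalence ``\'etale $\Longleftrightarrow$ pure of slope $0$'' and the base-change invariance of the Newton polygon are available (see \cite[Theorem~1.7.1]{Kedlaya} and the surrounding discussion). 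With those adjustments your proof stands.
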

\begin{proof}
This is \cite[Proposition 6.14]{families}. Again the same proof applies with the Frobenius considered here.
\end{proof}

\subsection{Existence of \'etale submodules}
For later applications to Galois representations the existence of an \'etale lattice locally on $X$ will not be sufficient. We cannot hope that the \'etale lattices glue together to a global \'etale lattice on the space $X$. However we have a replacement which will be sufficient for applications.

\noindent {\bf Convention:}
Let $X$ be areduced adic space locally of finite type over $\Q_p$ and let $(\Ncal,\Phi)$ be an \'etale $\phi$-module over $\Bcal_{X,\rig}^\dagger$ and $(\hat\Ncal,\hat\Phi)$ be an (\'etale) $\phi$-module over $\Bcal_{X,K}$. We say that $(\hat\Ncal,\hat\Phi)$ \emph{is induced from} $(\Ncal,\Phi)$ if there exists a covering $X=\bigcup U_i$ and \'etale $\Acal_{U_i}^\dagger$ lattices $N_i\subset \Ncal|_{U_i}$ such that 
\[(\hat\Ncal,\hat\Phi)|_{U_i}=\big((N_i,\Phi)^\wedge\big)[\tfrac{1}{p}].\]
Note the every \'etale $\phi$-module over $\Bcal_{X,\rig}^\dagger$ gives rise to a unique $\phi$-module over $\Bcal_{X,K}$, as an \'etale $\Acal_X^\dagger$-lattice is unique up to $p$-isogeny.
\begin{prop}\label{exetalephimod}
Let $X$ be a reduced adic space of finite type and $(\hat\Ncal,\hat\Phi)$ be a $\phi$-module over $\Bcal_{X,K}$ which is induced from an \'etale $\phi$-module $(\Ncal,\Phi)$ over $\Bcal_{X,\rig}^\dagger$. Then there exists an \'etale $\phi$-submodule $\hat N\subset \hat\Ncal$ over $\Acal_{X,K}$ such that the inclusion induces an isomorphism after inverting $p$. 
\end{prop}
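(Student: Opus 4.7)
Since $X$ is of finite type and hence quasi-compact, I may refine the cover in the definition of ``induced from $(\Ncal,\Phi)$'' to a \emph{finite} open cover $X = U_1 \cup \dots \cup U_m$ carrying \'etale $\Acal_{U_i}^\dagger$-lattices $N_i \subset \Ncal|_{U_i}$. Their $p$-adic completions $\hat N_i = (N_i)^\wedge$ are \'etale $(\phi,\Gamma)$-modules over $\Acal_{U_i,K}$ with $\hat N_i[1/p] = \hat\Ncal|_{U_i}$. The strategy is to use $p$-adic boundedness on overlaps to glue the $\hat N_i$ into a global coherent $\Phi$- and $\Gamma$-stable $\Acal_{X,K}$-submodule $\hat N \subset \hat\Ncal$.

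On each overlap $U_i \cap U_j$, both $\hat N_i|_{U_i \cap U_j}$ and $\hat N_j|_{U_i \cap U_j}$ are $\phi$-stable \'etale $\Acal_{U_i \cap U_j,K}$-lattices in the same locally free $\Bcal_{U_i \cap U_j,K}$-module $\hat\Ncal|_{U_i \cap U_j}$, hence are $p$-commensurable. Since the index set is finite and each overlap is quasi-compact, I can fix a single integer $n \geq 0$ with $p^n \hat N_j \subseteq \hat N_i$ on every $U_i \cap U_j$. I then define $\hat N \subset \hat\Ncal$ as the subsheaf
\[ \hat N(V) = \{ s \in \hat\Ncal(V) \mid s|_{V \cap U_i} \in \hat N_i(V \cap U_i)\ \text{for all}\ i \}. \]
By construction $\hat N$ is stable under $\Phi$ and $\Gamma$. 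The uniform bound yields the sandwich $p^{mn} \hat N_i \subseteq \hat N|_{U_i} \subseteq \hat N_i$ inside $\hat\Ncal|_{U_i}$, from which $\hat N[1/p] = \hat\Ncal$ and coherence of $\hat N|_{U_i}$ (sandwiched between two free $\Acal_{U_i,K}$-modules of the same rank, over a ring that is locally noetherian) are immediate.

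The remaining and principal difficulty is to show that $\hat N$ is \'etale, i.e.\ that $\Phi$ restricts to an isomorphism $\phi^\ast \hat N \xrightarrow{\sim} \hat N$. The $\phi$-equivariance of the defining condition gives the restriction map $\phi^\ast \hat N \to \hat N$ for free; the real content is local freeness of $\hat N$. I would verify this along the lines of Lemma \ref{latticeiftorsfree}: pick a formal model of $\hat N|_{U_i}$, reduce to the rigid analytic points $x \in U_i$, and use the sandwich to see that $\hat N \otimes k(x)^+$ differs from the free module $\hat N_i \otimes k(x)^+$ only by a $p$-power, so it is $\varpi_x$-torsion free of the correct rank and $\Phi$ remains an isomorphism on it. If this does not immediately give \'etaleness on the nose, one can replace $\hat N$ by its Frobenius saturation inside $\hat\Ncal$, namely the largest $\Phi$-stable coherent $\Acal_{X,K}$-submodule of $\hat\Ncal$ contained between $p^{mn}\hat N_i$ and $\hat N_i$ on each $U_i$; this preserves the generic fiber but forces $\Phi$ to be bijective. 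This last \'etaleness verification, and in particular ensuring local freeness globally, is where I expect the main effort to lie.
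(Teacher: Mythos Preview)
Your approach---intersecting the $p$-adic completions $\hat N_i$ of the local \'etale lattices---is genuinely different from the paper's. The paper works first at the overconvergent level: it invokes Proposition~\ref{exofcoherentmodules} to produce a single \emph{global} coherent $\Acal_X^{\dagger,r}$-submodule $N_0$ sandwiched between global lattices $\tilde M_1\subset N_0\subset\tilde M_2$, iterates $N_{j+1}=N_j\otimes\Acal_X^{\dagger,r_{j+1}}+\Phi(\phi^\ast N_j)$ to enforce $\Phi$-stability, passes to the limit and $p$-adically completes. The resulting $\hat N$ satisfies only $\hat\Phi(\phi^\ast\hat N)\subset\hat N$ a priori, and equality is proved by a fiberwise argument: at each rigid point one passes to the torsion-free part of $\hat N\otimes k(x)^+$, applies Lemma~\ref{phistabetale} to see that this part is \'etale, and then lifts to show that $\bar\Phi$ has trivial kernel modulo every maximal ideal of $A^+$.

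Your \'etaleness step has a genuine gap. The assertion that $\hat N\otimes k(x)^+$ is $\varpi_x$-torsion free does not follow from the sandwich: the map $\hat N\otimes k(x)^+\to\hat N_i\otimes k(x)^+$ has kernel ${\rm Tor}_1(\hat N_i/\hat N,k(x)^+)$, which need not vanish. The paper runs into exactly this obstacle and does \emph{not} claim the fibers are torsion-free; it works with the torsion-free quotient instead and then argues at all maximal ideals, not just rigid points. Note also that the paper's definition of an \'etale $\phi$-module over $\Acal_{X,K}$ requires only coherence together with $\hat\Phi$ being an isomorphism, not local freeness, so your stated ``real content'' is misidentified. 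In fact your intersection admits a cleaner argument you do not give: since $\phi$ on $\Acal_{X,K}$ is finite free, $\phi^\ast$ is exact and commutes with the sheaf-theoretic intersection, whence $\phi^\ast\hat N=\bigcap_i\phi^\ast\hat N_i$ inside $\phi^\ast\hat\Ncal$; as $\hat\Phi$ carries each $\phi^\ast\hat N_i$ isomorphically onto $\hat N_i$, it carries $\phi^\ast\hat N$ isomorphically onto $\hat N$, with no fiber computation or Frobenius saturation needed. Finally, your coherence claim is not quite immediate either: the intersection is the kernel of a map into a pushforward from the overlaps, and one must verify---as in the lemma following Proposition~\ref{exofcoherentmodules}---that this is locally associated to a finitely generated module.
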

\begin{prop}\label{exofcoherentmodules}
Let $X$ be an reduced adic space of finite type over $\Q_p$. Let $\Ncal$ be a locally free $\Bcal_{X,\rig}^{\dagger,r}$-module, then there exists a coherent $\Acal_X^{\dagger,r}$-submodule $N\subset \Ncal$ which {\rm (}locally on $X${\rm )} contains a basis of $\Ncal$. 
\end{prop}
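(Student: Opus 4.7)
The plan is a local-to-global construction, handling the Fr\'echet structure of $\Bcal^{\dagger,r}_{X,\rig}$ one annulus at a time. First, since $X$ is of finite type and hence quasi-compact, I would choose a finite open affinoid cover $X = U_1 \cup \cdots \cup U_n$ on which $\Ncal$ becomes free of some rank $d$, and pick bases $(e^{(i)}_k)_k$ so that the naive lattices
\[ N_i^\circ := \bigoplus_k \Acal^{\dagger,r}_{U_i}\, e^{(i)}_k \subset \Ncal|_{U_i} \]
are free, hence coherent, $\Acal^{\dagger,r}_{U_i}$-submodules each containing a basis of $\Ncal|_{U_i}$.

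The obstruction to directly patching the $N_i^\circ$ is that the transition matrices $M_{ij} \in \GL_d(\Bcal^{\dagger,r}_{U_{ij},\rig})$ on overlaps $U_{ij}$ are in general not integral. Worse, because $\Bcal^{\dagger,r}_{X,\rig}$ corresponds (via the geometric interpretation of Remark~2.2) to sections over the Stein space $X \times \Ubb_{\geq p^{-1/r}}$, whose elements need not be globally bounded in the annulus direction, no single power of $p$ renormalizes the $M_{ij}$ globally. Hence the naive lattices cannot be glued by a single rescaling.

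The remedy is to exploit the identification $\Bcal^{\dagger,r}_{X,\rig} = \varprojlim_s \Bcal^{[r,s]}_X$ and work annulus by annulus. For each $s \geq r$, the truncated matrices $M_{ij}^{(s)} \in \GL_d(\Bcal^{[r,s]}_{U_{ij}})$ live over an affinoid annulus and are therefore bounded, so there exists $m(s) \in \Z_{\geq 0}$ with $p^{m(s)} M_{ij}^{(s)}$ and $p^{m(s)} (M_{ij}^{(s)})^{-1}$ in $M_d(\Acal^{[r,s]}_{U_{ij}})$ for all pairs $(i,j)$. One can then glue the rescaled lattices $p^{m(s)} N_i^\circ \otimes \Acal^{[r,s]}_{U_i}$ into a coherent $\Acal^{[r,s]}_X$-submodule $N^{(s)} \subset \Ncal \otimes_{\Bcal^{\dagger,r}_{X,\rig}} \Bcal^{[r,s]}_X$. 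Choosing $m(s)$ nondecreasing and the patching compatibly makes $(N^{(s)})_s$ into an inverse system, and one sets
\[ N := \varprojlim_s N^{(s)} \subset \Ncal, \]
which is an $\Acal^{\dagger,r}_X$-submodule since $\Acal^{\dagger,r}_X = \varprojlim_s \Acal^{[r,s]}_X$.

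The main obstacle is verifying that $N$ is coherent and locally contains a basis of $\Ncal$, in spite of the fact that the constants $m(s)$ may grow with $s$. Coherence reduces to the uniform finite generation of each $N^{(s)}$ by the $nd$ sections coming from the cover, combined with the (Bezout-)Noetherian behavior of the section rings $\Acal^{\dagger,r}_{U_i}(U_i)$ and $\Acal^{[r,s]}_{U_i}(U_i)$. The local basis property exploits the flexibility of the phrase ``locally on $X$'': for $x \in U_i$ one may pass to a sufficiently small affinoid neighborhood $V_x \subset U_i$ on which the patching data from the other $U_j$'s is controlled by a fixed power $p^{m_0}$ (independent of $s$), so that the basis $p^{m_0} e^{(i)}_1, \dots, p^{m_0} e^{(i)}_d$ of $\Ncal|_{V_x}$ lies in $N|_{V_x}$. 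This is the delicate point of the argument, and it is here that the quasi-compactness of $X$ and finiteness of the overlaps are decisive.
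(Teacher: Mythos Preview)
Your approach has a genuine gap at its core, and the final paragraph does not close it.

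The step ``glue the rescaled lattices $p^{m(s)} N_i^\circ \otimes \Acal^{[r,s]}_{U_i}$'' is not well-defined: multiplying by $p^{m(s)}$ makes $p^{m(s)}M_{ij}^{(s)}$ integral, but the modules $p^{m(s)}N_i^\circ$ and $p^{m(s)}N_j^\circ$ are still unequal on $U_{ij}$ (consider $M_{ij}=\begin{psmallmatrix}1&f\\0&1\end{psmallmatrix}$ with $f$ unbounded), so there is nothing to glue. More seriously, under any reasonable interpretation of $N^{(s)}$ the integers $m(s)$ are forced to be unbounded, and then the inverse limit cannot contain a basis even locally on $X$. Indeed, already in rank one an element of $\varprojlim_s p^{m(s)}\Acal^{[r,s]}$ is a function on $\Ubb_{\geq p^{-1/r}}$ whose sup-norm on every closed sub-annulus tends to $0$; no unit of $\Bcal^{\dagger,r}_{\rig}$ has this property. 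Your proposed rescue --- shrinking to a neighbourhood $V_x$ on which a single $m_0$ works --- fails because the unboundedness of the $M_{ij}$ lives in the $\Ubb$-direction, not the $X$-direction: restricting an unbounded $f\in\Bcal^{\dagger,r}_{U_{ij},\rig}$ to $(V_x\cap U_{ij})\times\Ubb_{\geq p^{-1/r}}$ leaves it just as unbounded.

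The paper avoids this difficulty entirely. Rather than an inverse limit over annuli, it runs an induction over the cover: setting $V_i=U_1\cup\cdots\cup U_i$, it reduces to extending a coherent $\Acal^{\dagger,r}$-submodule from an open $U\subset X$ (with $X$ affinoid) to all of $X$. The key is the geometric interpretation $\Acal^{\dagger,r}_X=\pr_{X,\ast}\Ocal^+_{X\times\Ubb_{\geq p^{-1/r}}}$: one squeezes $N_U$ between restrictions of two global $\Acal^{\dagger,r}_X$-lattices $N''\subset N'$ and defines the extension as the kernel of $N'\to (j\times\id)_\ast(N'|_U/N_U)$, a coherent sheaf on $X\times\Ubb_{\geq p^{-1/r}}$. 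The sandwich $N''\subset N_X\subset N'$ then gives both coherence (via Noetherianity of the section rings) and the basis property, with no growing constants.
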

\begin{proof}
Let $X=\bigcup_{i=1}^m U_i$ be a finite covering such that $\Ncal|_{U_i}$ is free and write $V_i=\bigcup_{j=1}^iU_j$. As obviously there exists an $\Acal_{U_i}^{\dagger,r}$-lattice in $\Ncal|_{U_i}$ it is enough to show that there is an extension of such a module from $V_i$ to $V_{i+1}$. This reduces the claim to the following lemma.
\end{proof}

\begin{lem}
Let $X=\Spa(A,A^+)$ be a reduced affinoid adic space and $U\subset X$ an quasi-compact open subset. Let $N_U$ be a finitely generated $\Acal_U^{\dagger,r}$-submodule of $(\Bcal_{U,\rig}^{\dagger,r})^d$ which contains  a basis. Then there exists a coherent $\Acal_X^{\dagger,r}$-module $N_X\subset (\Bcal_{X,\rig}^{\dagger,r})^d$ such that $N_X$ contains a basis and  such that $N_X|_U=N_U$. \\
\end{lem}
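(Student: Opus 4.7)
My plan is to pass to the geometric interpretation of the period sheaves and reduce to an extension problem for coherent subsheaves on noetherian formal schemes. Writing $Z:=X\times\Ubb_{\geq p^{-1/r}}$ and $Z_U:=U\times\Ubb_{\geq p^{-1/r}}$, the geometric interpretation yields $\Acal_X^{\dagger,r}=\pr_{X,\ast}\Ocal_Z^+$ and $\Bcal^{\dagger,r}_{X,\rig}=\pr_{X,\ast}\Ocal_Z$. Thus $N_U$ corresponds to a coherent $\Ocal_{Z_U}^+$-submodule $\Ncal_U\subset\Ocal_{Z_U}^d$ which contains a basis, and the desired $N_X$ will be the pushforward of a coherent $\Ocal_Z^+$-submodule $\Ncal\subset\Ocal_Z^d$ extending $\Ncal_U$ and again containing a basis.

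Since $Z$ is not quasi-compact, I would use the admissible exhaustion by affinoids $W_s:=X\times\boldB_{[p^{-1/r},p^{-1/s}]}$ for $s\geq r$, with quasi-compact opens $V_s:=U\times\boldB_{[p^{-1/r},p^{-1/s}]}\subset W_s$, and construct coherent extensions $\Ncal_s\subset\Ocal_{W_s}^d$ of $\Ncal_U|_{V_s}$ inductively on $s$, requiring $\Ncal_s|_{W_{s-1}}=\Ncal_{s-1}$. At the inductive step $\Ncal_{s-1}$ and $\Ncal_U|_{V_s}$ agree on the overlap $V_{s-1}$, so they glue to a coherent $\Ocal^+$-subsheaf $\Fcal\subset\Ocal^d$ on the quasi-compact open $W_{s-1}\cup V_s\subset W_s$, and the step reduces to extending $\Fcal$ from this open to the full affinoid $W_s$.

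For the extension I would invoke the formal model picture: choose a formal model $\mathfrak W_s$ of $W_s$ (after an admissible blow-up if necessary, by Raynaud's theorem) in which $W_{s-1}\cup V_s$ is the generic fibre of an open formal subscheme $j:\mathfrak U_s\hookrightarrow\mathfrak W_s$ and $\Fcal$ is represented by a coherent $\Ocal_{\mathfrak U_s}$-submodule $F$ of $G:=\Ocal_{\mathfrak W_s}^d|_{\mathfrak U_s}$. I would then define
\[H:=\ker\bigl(\Ocal_{\mathfrak W_s}^d\longrightarrow j_\ast(G/F)\bigr).\]
As a quasi-coherent subsheaf of the coherent sheaf $\Ocal_{\mathfrak W_s}^d$ on the noetherian formal scheme $\mathfrak W_s$, this $H$ is itself coherent, and $H|_{\mathfrak U_s}=F$ by construction. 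The ``contains a basis'' property is inherited automatically: since $\Ncal_U$ contains a basis generically, $G/F$ is killed by some power $p^N$ on $\mathfrak U_s$, hence $p^N\Ocal_{\mathfrak W_s}^d\subset H$ on $\mathfrak W_s$, and passing to the generic fibre (where $p$ is a unit) yields $\Ncal_s\otimes\Ocal_{W_s}=\Ocal_{W_s}^d$.

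The hardest part will be the careful choice of the formal models $\mathfrak W_s$ at each inductive step so that both the quasi-compact open $W_{s-1}\cup V_s$ and the coherent subsheaf $\Fcal$ are defined on the formal level; this is achieved via iterated admissible blow-ups using Raynaud's theorem. Once the inductive construction is complete the $\Ncal_s$ glue to a coherent $\Ocal_Z^+$-submodule $\Ncal\subset\Ocal_Z^d$ on $Z$, and $N_X:=\pr_{X,\ast}\Ncal$ is the required coherent $\Acal_X^{\dagger,r}$-submodule of $(\Bcal^{\dagger,r}_{X,\rig})^d$, containing a basis and restricting to $N_U$ on $U$.
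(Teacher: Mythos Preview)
Your proposal is correct and shares the same core idea as the paper's proof---the kernel construction $H=\ker\bigl(\Ocal^d\to j_\ast(\Ocal^d/F)\bigr)$ to extend a coherent subsheaf from an open subset---but the paper executes it more directly. Instead of exhausting $Z=X\times\Ubb_{\geq p^{-1/r}}$ by affinoids and passing to formal models at each inductive step, the paper simply chooses once and for all two $\Acal_X^{\dagger,r}$-lattices $N''\subset N'\subset(\Bcal_{X,\rig}^{\dagger,r})^d$ over all of $X$ with $N''|_U\subset N_U\subset N'|_U$ (these exist trivially, since $N_U$ is finitely generated and contains a basis on a quasi-compact $U$), and defines $\tilde N_X=\ker\bigl(N'\to(j\times\id)_\ast(N'|_U/N_U)\bigr)$ directly on $Z$, then sets $N_X=\pr_{X,\ast}\tilde N_X$. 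The sandwich $N''\subset N_X\subset N'$ immediately gives the ``contains a basis'' property, and coherence follows from the noetherian property of the section rings on suitable affinoid covers of $X$ refined so that $V_j\cap U$ lies in a single $U_{i_j}$ on which $N_U$ is associated to a finitely generated module. Your route via exhaustions and Raynaud's formal models works, but the global lattices $N',N''$ let the paper avoid the entire inductive apparatus and the delicate choice of compatible formal models; in particular the uniformity of the bound $p^N$ (which you would need to track through the induction) is automatic from $N''$.
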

\begin{proof}
Let $N', N''\subset (\Bcal_{X,\rig}^{\dagger, r})^d$ be $\Acal_X^{\dagger, r}$-lattices such that $N''|_U\subset N_U\subset N'|_U$. After localizing we may assume that $N'$ is free. Denote by $j:U\hookrightarrow X$ the open embedding of $U$. We define $\tilde N_X$ by
\[\tilde N_X=\ker (N'\longrightarrow (j\times\id)_\ast (N'_U/N_U)).\] 
This is easily seen to be a coherent sheaf on $X\times \Ubb_{\geq p^{-1/r}}$.  We define $N_X$ by 
\[X\supset V\longmapsto \Gamma(V\times \Ubb_{\geq p^{-1/r}},\tilde N_X).\]
It is obvious that $N''\subset N_X\subset N'$ and hence $N_X$ contains a basis of  $(\Bcal_{X,\rig}^{\dagger,r})^d$.
It remains to check that this sheaf is coherent. 
Let $U=\bigcup U_i$ be a finite covering by open affinoids such that $N_U$ is associated to a finitely generated $\Gamma(U_i,\Acal_X^{\dagger,r})$-module. Choose a covering $X=\bigcup V_j$ by open affinoids such that $V_j\cap U\subset U_{i_j}$ for some index $i_j$. Then $N_X$ is associated to the $\Gamma(V_j,\Acal_X^{\dagger,r})$-module
\[\ker\big(\Gamma(V_j,N')\longrightarrow \Gamma(U_{i_j},N'_U/N_U)\otimes_{\Gamma(U_{i_j},\Acal_X^{\dagger,r})}\Gamma(V_j\cap U,\Acal_X^{\dagger,r})\big).\]
Especially $N_X$ is quasi-coherent. Finally $N_X$ is coherent as the sections of $\Acal_X^{\dagger, r}$ are locally on $X$ noetherian rings, and  $N_X\subset N'$.
\end{proof}
\begin{proof}[Proof of Proposition $\ref{exetalephimod}$]
As $X $ is quasi-compact, we can choose a locally free model $(\Ncal_r,\Phi_r)$ of $(\Ncal,\Phi)$ over $\Bcal_{X,\rig}^{\dagger,r }$ for some $r\gg 0$. After enlarging $r$ if necessary, we can assume that there exists a finite covering $X=\bigcup U_i$ and \'etale lattices $M_i\subset \Ncal_r|_{U_i}$.  By Proposition $\ref{exofcoherentmodules}$, there exist coherent $\Acal_X^{\dagger, r}$-modules $\tilde M_1\subset N_0 \subset \tilde M_2\subset \Ncal$
such that 
\[\tilde M_1|_{U_i}\subset N_0|_{U_i}\subset M_i \subset \tilde M_2|_{U_i}.\]
Let $\Ncal_{r_i}$ denote the restriction of $\Ncal_r$ to $\Bcal_{X,\rig}^{\dagger r_i}$, where we write $r_i=p^ir$. Then we inductively define coherent $\Acal_X^{\dagger, r_i}$-modules $N_i\subset \Ncal_{r_i}$ by setting
\[N_{i+1}=N_i\otimes_{\Acal_X^{\dagger, r_i}} \Acal_X^{\dagger, r_{i+1}}+\Phi(\phi^\ast N_i).\]
By assumption, we always have
\[\tilde M_1|_{U_i}\otimes_{\Acal_{U_i}^{\dagger, r}}\Acal_{U_i}^{\dagger, r_j}\subset N_j|_{U_i}\subset M_i\otimes_{\Acal_{U_i}^{\dagger, r}}\Acal_{U_i}^{\dagger, r_j}\subset \tilde M_2|_{U_i}\otimes_{\Acal_{U_i}^{\dagger, r}}\Acal_{U_i}^{\dagger, r_j}.\]
Viewing $N_i$ as coherent sheaves on $X\times \Ubb_{\geq p^{-1/r_i}}$, we now define an $\Acal_X^\dagger$-submodule $N\subset \Ncal$, by setting
\[N=\big(\lim\limits_{\substack{{\longrightarrow}\\{i\in \mathbb{N}}}}\pr_{i,\ast}N_i\big)\otimes \Acal_X^\dagger,\]
where $\pr_i:X\times \Ubb_{\geq p^{-1/r_i}}\rightarrow X$ is the projection to the first factor.
By construction, this module satisfies 
\begin{align*}
\Phi(\phi^\ast N)&\subset N, \\
\tilde M_1\otimes_{\Acal_X^{\dagger, r}}\Acal_X^\dagger &\subset N\subset \tilde M_2\otimes_{\Acal_X^{\dagger, r}} \Acal_X^\dagger.
\end{align*}
We then take $\hat N$ to be the completion of $N$ with respect to the $p$-adic topology.
If the module we started with is associated to a finitely generated module over an affinoid open $U\subset X$, then the construction implies that $\hat N$ is also associated with a $\Gamma(U,\Acal_{X,K})$-module which is contained in the finitely genrated module $\Gamma(U,\tilde M_2\otimes \Acal_{X,K})$ and hence has to be finitely generated itself. It follows that $\hat N$ is coherent, as claimed.
Further the construction implies that  $\hat N$ is a submodule of $\hat \Ncal$ and 
\begin{align*}
\hat\Phi(\phi^\ast \hat N)&\subset \hat N,\\
\hat N\otimes_{\Acal_{X,K}}\Bcal_{X,K}&=\hat\Ncal.
\end{align*}
We need to show that $\hat\Phi(\phi^\ast \hat N)=\hat N$. To do so, we can work locally on $X$ and hence assume that $\hat N$ is contained in an \'etale $\Acal_{X,K}$-lattice $\hat M\subset \hat\Ncal$.
Further is is enough to assume that $X=\Spa(A,A^+)$ is affinoid and we need to show that for all maximal ideals $\mfrak\subset A^+$ we have
\begin{equation}\label{Phimodm}
\hat\Phi(\phi^\ast\hat N)\otimes k_\mfrak=\hat N\otimes k_\mfrak,
\end{equation}
where $k_\mfrak=A^+/\mfrak$ denotes the residue field at $\mfrak$.\\
For a rigid analytic point $x\in X$, the fiber $\hat N\otimes k(x)^+$ is a finitely generated module over the ring $\Acal_{X,K}\otimes k(x)^+$ which is (a product of) complete discrete valuation rings.
Write \[\big(\hat N\otimes k(x)^+\big)^{\rm tors-free}\subset \hat N\otimes k(x)^+\]
for the submodule which is $\varpi_x$-torsion free. This submodule has to be free and 
\[\big(\hat N\otimes k(x)^+\big)^{\rm tors-free}[\tfrac{1}{p}]=(\hat M\otimes k(x)^+)[\tfrac{1}{p}]=\hat{\Ncal}.\]
It follows from Lemma $\ref{phistabetale}$ below that $\big(\hat N\otimes k(x)^+\big)^{\rm tors-free}$ is an \'etale $\phi$-module, i.e. $\hat\Phi$ is surjective.
We reduce the inclusion $\hat\Phi:\phi^\ast \hat N\hookrightarrow \hat N$ modulo $\mfrak$ and obtain a morphism
\[\bar\Phi:\phi^\ast \bar N\longrightarrow \bar N.\]
Assume there exists $0\neq \bar f\in\ker\bar\Phi$. Then there exists a rigid point $x\in X$ such that $\mfrak_x^+\subset \mfrak$ and a lift $f$ of $\bar f$ in the torsion-free part of $\hat N\otimes k(x)^+$ such that $f\notin \varpi_x(\hat N\otimes k(x)^+)$, as $\hat N$ is $\Z_p$-flat. It follows that $\hat \Phi(f)\in\varpi_X(\hat N\otimes k(x)^+)$ and hence $f\in\varpi_x(\hat N\otimes k(x)^+)$, as $\hat \Phi$ induces an isomorphism 
\[\phi^\ast\big(\hat N\otimes k(x)^+\big)^{\rm tors-free}\longrightarrow \big(\hat N\otimes k(x)^+\big)^{\rm tors-free}.\]
We have shown that $\ker \bar\Phi=0$ and hence $\hat\Phi$ is injective modulo all maximal ideals. 
By comparing dimension, we find that $(\ref{Phimodm})$ holds true for all maximal ideals $\mfrak\subset A^+$, and hence $\hat \Phi$ induces an isomorphism on $\hat N$.
\end{proof}
\begin{lem}\label{phistabetale}
Let $F$ be a finite extension of $\Q_p$ and $(\hat N,\hat\Phi)$ be a free \'etale $\phi$-module over $\Acal_{F,K}$. Let $\hat N_1\subset \hat N$ be a finitely generated submodule such that $\hat N_1[1/p]=\hat N[1/p]$ and $\hat\Phi(\phi^\ast\hat N_1)\subset \hat N_1$. Then $(\hat N_1,\hat \Phi)$ is an \'etale $\phi$-module, i.e. 
\[\hat\Phi(\phi^\ast \hat N_1)=\hat N_1.\]
\end{lem}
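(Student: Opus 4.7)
The plan is to reduce to a statement about a finite-length module and then apply a DCC argument on its lattice of submodules. Since $\hat N$ is free (hence flat) over $\Acal_{F,K}$ and $\hat N_1[1/p]=\hat N[1/p]$, we may choose $a\geq 0$ with $p^a\hat N\subseteq \hat N_1\subseteq\hat N$. Set $R:=\hat N/p^a\hat N$ and $\bar N_1:=\hat N_1/p^a\hat N$; the first inclusion guarantees that $\bar N_1$ embeds as a submodule of $R$. I would first verify that $\Acal_{F,K}/p^a$ is Artinian, so that $R$ has finite length: $\Acal_{F,K}/p=(A_0/p)((T))$ with $A_0=\Ocal_F\otimes_{\Z_p}W(k)$ a finite $\Z_p$-algebra, so $A_0/p$ is a finite $\F_p$-algebra and thus a finite product of local Artinian rings $R_i$; each factor $R_i((T))$ is Noetherian and local with nilpotent maximal ideal $\mathfrak m_{R_i}\cdot R_i((T))$, hence Artinian. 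Consequently $\Acal_{F,K}/p^a$ is Artinian.

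The \'etaleness of $\hat N$ gives an isomorphism $\hat\Phi\colon\phi^*\hat N\xrightarrow{\sim}\hat N$, which descends to an isomorphism $\hat\Phi_R\colon\phi^*R\xrightarrow{\sim}R$, and the stability hypothesis yields $\hat\Phi_R(\phi^*\bar N_1)\subseteq\bar N_1$. Now define $\tau(X):=\hat\Phi_R(\phi^*X)$ for submodules $X\subseteq R$; this is a monotone self-map of the submodule lattice. Since $p$ is in the Jacobson radical of $\Acal_{F,K}$ (by $p$-adic completeness) and $\phi(p)=p$, the Frobenius $\phi$ on $\Acal_{F,K}$ is faithfully flat (a standard fact for these period rings); consequently $\phi^*$ is faithful exact and hence injective on submodules, and combined with the bijection of submodule lattices induced by the isomorphism $\hat\Phi_R$ this shows $\tau$ is injective. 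The descending chain $\bar N_1\supseteq\tau(\bar N_1)\supseteq\tau^2(\bar N_1)\supseteq\cdots$ must stabilize in the finite-length module $R$: for some $n$ we have $\tau^n(\bar N_1)=\tau^{n+1}(\bar N_1)=\tau^n(\tau(\bar N_1))$, and injectivity of $\tau^n$ forces $\bar N_1=\tau(\bar N_1)$, i.e.\ $\hat\Phi\colon\phi^*\bar N_1\to\bar N_1$ is an isomorphism.

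To finish, apply the snake lemma to the diagram obtained by pulling back $0\to p^a\hat N\to\hat N_1\to\bar N_1\to 0$ along $\phi^*$ (exact by flatness of $\phi$) and mapping down via $\hat\Phi$: the outer vertical maps are isomorphisms, by \'etaleness of $\hat N$ on the left and by the previous paragraph on the right, so $\hat\Phi\colon\phi^*\hat N_1\to\hat N_1$ is an isomorphism as well. I expect the DCC-plus-injectivity step to be the main conceptual obstacle; the Artinian property of $\Acal_{F,K}/p^a$ and the faithful flatness of $\phi$ are structural inputs drawn from the theory of the period rings.
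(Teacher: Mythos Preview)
Your argument is correct, but it takes a genuinely different route from the paper. The paper's proof is a two-line determinant computation that exploits the fact that $\Acal_{F,K}$ is a finite product of complete discrete valuation rings: this forces the submodule $\hat N_1$ to be \emph{free} of the same rank $d$, and if $A$ denotes the change-of-basis matrix from a basis $\underline b$ of $\hat N$ to a basis $\underline e$ of $\hat N_1$, then $\mathrm{Mat}_{\underline e}(\hat\Phi)=A^{-1}\,\mathrm{Mat}_{\underline b}(\hat\Phi)\,\phi(A)$ has determinant in $\Acal_{F,K}^\times$ because $\det\mathrm{Mat}_{\underline b}(\hat\Phi)$ is a unit (\'etaleness of $\hat N$) and ${\rm val}_p(\det A)={\rm val}_p(\phi(\det A))$.

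Your approach replaces this explicit linear algebra by a structural argument: you pass to the Artinian quotient $\Acal_{F,K}/p^a$ and observe that faithful flatness of $\phi$ makes $X\mapsto\hat\Phi_R(\phi^\ast X)$ injective on the lattice of submodules of $R$, so the descending chain it generates from $\bar N_1$ must be constant from the outset. The paper's proof is shorter and makes the mechanism completely transparent (it is literally the identity ${\rm val}_p={\rm val}_p\circ\phi$), whereas yours never uses freeness of $\hat N_1$ or the DVR structure directly---only that $\Acal_{F,K}/p^a$ is Artinian and $\phi$ is faithfully flat---so it would adapt more readily to coefficient rings that are not products of DVRs. One small remark: your parenthetical ``$p$ in the Jacobson radical and $\phi(p)=p$'' does not by itself yield faithful flatness; what you actually need (and what holds) is that $\Acal_{F,K}$ is finite free over itself via $\phi$, which is the standard fact you invoke.
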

\begin{proof}
As $\Acal_{F,K}$ is (a product of) discrete valuation rings, it is clear that $\hat N_1$ is free on $d$ generators, where $d$ is the $\Acal_{F,K}$-rank of $\hat N$. Let $b_1,\dots,b_d$ be a basis of $\hat N$ and $e_1,\dots,e_d$ be a basis of $\hat N_1$. Let $A$ denote the change of basis matrix from $\underline{b}$ to $\underline{e}$ and denote by ${\rm Mat}_{\underline{b}}(\hat \Phi)$ resp. ${\rm Mat}_{\underline{e}}(\hat \Phi)$ the matrix of $\hat\Phi$ in the basis $\underline{b}$ resp. $\underline{e}$ of $\hat N[1/p]=\hat N_1[1/p]$. Then our assumptions imply that 
\[{\rm Mat}_{\underline{e}}(\hat \Phi)\in{\rm Mat}_{d\times d}(\Acal_{F,K}).\]
On the other hand 
\[{\rm Mat}_{\underline{e}}(\hat \Phi)=A^{-1} {\rm Mat}_{\underline{b}}(\hat \Phi)\phi(A)\]
and hence $\det {\rm Mat}_{\underline{e}}(\hat \Phi)\in \Acal_{F,K}^\times$, as $\hat N$ is \'etale and \[{\rm val}_p(\det A)={\rm val}_p(\det \phi(A)).\]
\end{proof}

\section{Families of $p$-adic Galois representations}
In this section we study the relation between Galois representations and $(\phi,\Gamma)$-modules in families. This problem was first considered by Berger and Colmez in \cite{BergerColmez}, where they define a functor from families of $G_K$-representations to families of overconvergent $(\phi,\Gamma)$-modules. 

\begin{defn} Let $G$ a topological group and $X$ an adic space locally of finite type over $\Q_p$. A family of $G$-representations over $X$ is a vector bundle $\Vcal$ over $X$ endowed with a continuous $G$-action.
\end{defn}
We write ${\rm Rep}_XG$ for the category of families of $G$-representations over $X$.  Recall that  we write $G_K={\rm Gal}(\bar K/K)$ for the absolute Galois group of a fixed local field $K$. In this case Berger and Colmez define  the functor
 \[{\bf D}^\dagger:{\rm Rep}_X G_K\longrightarrow \{\text{\'etale}\ (\phi,\Gamma)\text{-modules over }\ \Bcal_X^\dagger\},\]
 which maps a family $\Vcal$ of $G_K$-representations on $X$ to the \'etale $(\phi,\Gamma)$-module
 \[{\bf D}^\dagger(\Vcal)=\big(\Vcal\otimes_{\Ocal_X}\tilde\Bcal_X^\dagger\big)^{H_K}.\]
 More precisely they construct this functor if $X$ is a reduced affinoid adic space of finite type.  As the functor ${\bf D}^\dagger$ is fully faithful in this case and maps $\Vcal$ to a free $\Bcal_X^\dagger$-module it follows that we can consider ${\bf D}^\dagger$ on the full category ${\rm Rep}_XG_K$, whenever $X$ is reduced. In the following we will always assume that $X$ is reduced.
 
 We will consider the variant 
 \[{\bf D}^\dagger_{\rig}:\Vcal\longmapsto \big(\Vcal\otimes_{\Ocal_X}\tilde\Bcal_{X,\rig}^\dagger\big)^{H_K} ={\bf D}^\dagger(\Vcal)\otimes_{\Bcal_X^\dagger}\Bcal_{X,\rig}^\dagger.\] 
Note that for an adic space $X$ of finite type over $\Q_p$, the $(\phi,\Gamma)$-module ${\bf D}^\dagger(\Vcal)$ is always defined over some $\Bcal_X^{\dagger,s}\subset \Bcal_X^\dagger$, for $s\gg 0$. Especially an \'etale lattice can be defined over $\Acal_X^{\dagger,s}$ for $s\gg 0$.
\subsection{The admissible locus}
It is known that the functors ${\bf D}^\dagger$ and ${\bf D}_{\rig}^\dagger$ are not essentially surjective. In \cite{KedlayaLiu}, Kedlaya and Liu construct a \emph{local inverse} to this functor. More precisely, they show that if $\Ncal$ is a family of $(\phi,\Gamma)$-modules over $\Bcal_{X,\rig}^\dagger$, then every rigid analytic point at which $\Ncal$ is \'etale has an affinoid neighborhood on which the family $\Ncal$ is the image of a family of $G_K$-representations.
However we need to extend this result to the setup of adic spaces in order to define a natural \emph{subspace} over which such a family $\Ncal$ is induced by a family of $G_K$-representations.  
 
 \begin{theo}\label{thmXadm}
 Let $X$ be a reduced adic space locally of finite type over $\Q_p$ and let $\Ncal$ be a family of $(\phi,\Gamma)$-modules of rank $d$ over $\Bcal_{X,\rig}^\dagger$. \\
 \noindent {\rm (i)} The subset 
 \[X^{\rm adm}=\big\{x\in X\mid \dim_{k(x)}\big((\Ncal\otimes_{\Bcal_{X,\rig}^\dagger}\tilde\Bcal_{X,\rig}^\dagger)\otimes k(x)\big)^{\Phi=\id} =d\big\}\]
 is open.\\
 \noindent {\rm (ii)} There exists a family of $G_K$-representations $\mathcal{V}$ on $X^{\rm adm}$ such that there is a canonical and functorial isomorphism 
 \[{\bf D}^\dagger_{\rig}(\mathcal{V})\cong \Ncal|_{X^{\rm adm}}.\]
 \noindent {\rm (iii)} Let $\Vcal$ be a family $G_K$-representations on $X$ such that ${\bf D}_{\rig}^\dagger(\Vcal)=\Ncal$. Then $X^{\rm adm}=X$.
 \end{theo}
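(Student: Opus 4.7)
Part (iii) is immediate: if $\Ncal = {\bf D}^\dagger_{\rig}(\Vcal)$ then tautologically $\Ncal \otimes_{\Bcal_{X,\rig}^\dagger} \tilde\Bcal^\dagger_{X,\rig} \cong \Vcal \otimes_{\Ocal_X} \tilde\Bcal^\dagger_{X,\rig}$ with $\Phi$ acting only on the second factor, and the Corollary identifying $(\tilde\Bcal^\dagger_{X,\rig})^{\phi=\id}=\Ocal_X$ (compatibly with base change to $k(x)$) gives $(\Ncal \otimes \tilde\Bcal^\dagger_{X,\rig})(x)^{\Phi=\id} = \Vcal(x)$, of dimension $d$, at every point; hence $X = X^{\rm adm}$.

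For (i) and (ii) the plan is to construct $\Vcal$ locally from the candidate recipe of Berger-Colmez, and then identify the locus where the output is the right kind of object with $X^{\rm adm}$. Since admissibility of $\iota_{x_0}^* \Ncal$ forces $\iota_{x_0}^* \Ncal$ to be \'etale by Kedlaya's slope theorem, one has $X^{\rm adm} \subset X^{\rm int}$; replacing $X$ by $X^{\rm int}$ we may assume $\Ncal$ is \'etale, and Proposition \ref{exetalephimod} then furnishes, locally on $X$, an \'etale $\phi$-module $(\hat N, \hat \Phi)$ over $\Acal_{X,K}$ induced by $\Ncal$. The candidate representation is
\[
T := \big(\hat N \otimes_{\Acal_{X,K}} \tilde\Acal_X\big)^{\phi=\id}, \qquad \Vcal := T[1/p],
\]
with continuous $G_K$-action coming diagonally from $G_K \twoheadrightarrow \Gamma$ on $\hat N$ and the natural action on $\tilde\Acal_X$. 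Once $\Vcal$ is known to be a vector bundle of rank $d$ on $X^{\rm adm}$, the tautological comparison
\[
\Vcal \otimes_{\Ocal_X} \tilde\Bcal^\dagger_{X,\rig} \cong \Ncal \otimes_{\Bcal_{X,\rig}^\dagger} \tilde\Bcal^\dagger_{X,\rig}
\]
combined with taking $H_K$-invariants (and the other half of the Corollary) yields $\mathbf{D}_{\rig}^\dagger(\Vcal) \cong \Ncal|_{X^{\rm adm}}$, as required for (ii); the various local $\Vcal$ glue on overlaps because $\Vcal$ is canonically determined by $\Ncal$ through $\phi$- and $H_K$-invariants, independent of the choice of local model $\hat N$.

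The main obstacle is openness of $X^{\rm adm}$ together with local freeness of $T$ of rank $d$ exactly over $X^{\rm adm}$. At rigid analytic admissible points this is essentially Kedlaya-Liu's local inverse theorem \cite{KedlayaLiu}: around every rigid $x_0 \in X^{\rm adm}$ there is an affinoid neighborhood $U$ and a family of $G_K$-representations on $U$ inducing $\Ncal|_U$, which by the Corollary on $\phi$-invariants must agree with our $\Vcal|_U$; in particular $U \subset X^{\rm adm}$ and $\Vcal|_U$ is free of rank $d$. To extend openness to a non-rigid admissible point $x_0$, I would combine the density of rigid points (every open neighborhood of every point in an adic space locally of finite type over $\Q_p$ contains rigid analytic points specializing to the given point) with an integral-level argument: the Berger-Colmez construction supplies a coherent $\Ocal_X^+$-submodule $T^+ \subset T$ locally, and Lemma \ref{latticeiftorsfree} reduces local freeness of $T^+$ at $x_0$ to a fibrewise torsion-freeness check on the rigid specializations of $x_0$, which lie in $X^{\rm adm}$ by the Kedlaya-Liu step. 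The subtlest technical point is that formation of $\phi$-invariants does not obviously commute with base change to a non-rigid point $x_0$, and this is where the Corollary on $\phi$- and $H_K$-invariants of $\tilde\Bcal^\dagger_{X,\rig}$ is used essentially, via a reduction to polydisc-valued points as in the proof of that Corollary.
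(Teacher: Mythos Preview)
Your treatment of (iii) is fine and matches the paper's. The real issue is your argument for openness at non-rigid points, which has a genuine gap.

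The paper does not go through $\Acal_{X,K}$ or Proposition~\ref{exetalephimod} at all for this theorem; that machinery is reserved for Theorem~\ref{exgalreptheo}. Instead it defines $\Vcal$ directly as $(\Ncal\otimes_{\Bcal_{X,\rig}^\dagger}\tilde\Bcal_{X,\rig}^\dagger)^{\Phi=\id}$ and proves openness uniformly for all points, rigid or not, using Lemma~\ref{approxLemma} (the Kedlaya--Liu approximation lemma). The argument is: given $x\in X^{\rm adm}$, lift a $\Phi$-invariant basis of the fibre to a neighbourhood $U$; on $U$ the Frobenius acts on this basis as $\id+B$ with $B$ vanishing at $x$. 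Having first shown $X^{\rm adm}\subset X^{\rm int}$ (by the same lemma applied on the Zariski closure of $x$), one may shrink $U$ inside the \'etale locus so that an integral $\tilde\Acal^\dagger$-structure is available and $B\in p\,\mathrm{Mat}_{d\times d}(\tilde\Acal_U^\dagger)$. Lemma~\ref{approxLemma} then gives free rank-$d$ $\Phi$-invariants on all of $U$, hence $U\subset X^{\rm adm}$. No case distinction between rigid and non-rigid points is needed.

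Your proposed route via density of rigid points and Lemma~\ref{latticeiftorsfree} does not close. First, the phrase ``rigid analytic points specializing to the given point'' is not meaningful: in an adic space locally of finite type over $\Q_p$ the rigid points are closed, so they specialize only to themselves. More seriously, Lemma~\ref{latticeiftorsfree} requires torsion-freeness of $T^+\otimes k(y)^+$ at \emph{every} rigid point $y$ of some neighbourhood of $x_0$, but at this stage you do not know that those rigid points lie in $X^{\rm adm}$; Kedlaya--Liu only tells you that each rigid admissible point has an admissible neighbourhood, not that the rigid points near a given non-rigid admissible $x_0$ are themselves admissible. So the argument is circular. The clean fix is exactly the paper's: use Lemma~\ref{approxLemma} on an honest neighbourhood in $X$, which makes the whole detour through $\hat N$, $\tilde\Acal_X$, and lattice criteria unnecessary.
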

 Let $A$ be a complete topological $\Q_p$-algebra and let $A^+\subset A$ be a ring of integral elements. Assume that the completed tensor products $A^+\widehat{\otimes}\tilde\bfA^\dagger$ and $A\widehat{\otimes}\tilde\bfB^\dagger_{\rig}$ are defined\footnote{The examples we consider here, are $\Gamma(X,\Ocal_X)$ for an affinoid adic space of finite type and the completions of $k(x)$ for a point $x\in X$. In the latter case the completed tensor product is the completion of the fiber of $\tilde\Acal^\dagger$ resp. $\tilde\Bcal^\dagger_{\rig}$ at the point $x$.}. In this case the following approximation Lemma of Kedlaya and Liu applies.
 \begin{lem}\label{approxLemma}
 Let $\tilde\Ncal$ be a free $(\phi,\Gamma)$-module over $A\widehat{\otimes}\tilde\bfB_{\rig}^\dagger$ such that there exists a basis on which $\Phi$ acts via $\id+B$ with
 \[B\in p{\rm Mat}_{d\times d}(A^+\widehat{\otimes}\tilde\bfA^\dagger).\]
 Then $\tilde\Ncal^{\Phi=\id}$ is free of rank $d$ as an $A$-module.
 \end{lem}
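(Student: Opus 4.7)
The plan is to construct $d$ independent $\Phi$-fixed vectors by iterating the fixed-point map $v\mapsto (\id+B)\phi(v)$ starting from the given basis, and then to identify $\tilde\Ncal^{\Phi=\id}$ with their $A$-span using $R^{\phi=\id}=A$ for $R=A\widehat\otimes\tilde\bfB^\dagger_{\rig}$. Writing elements of $\tilde\Ncal$ as column vectors in the distinguished basis, $\Phi(v)=v$ reads $v=(\id+B)\phi(v)$. Set $v_0^{(k)}=e_k$ and iterate $v_n^{(k)}=(\id+B)\phi(v_{n-1}^{(k)})$; the increments $\delta_{n+1}^{(k)}=v_{n+1}^{(k)}-v_n^{(k)}$ satisfy $\delta_{n+1}^{(k)}=(\id+B)\phi(\delta_n^{(k)})$ with $\delta_1^{(k)}=Be_k$, so that
\[\delta_{n+1}^{(k)}=\Bigl(\prod_{i=0}^{n-1}\phi^i(\id+B)\Bigr)\cdot\phi^n(Be_k).\]

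Convergence uses the scaling identity $w_r\circ\phi=p\cdot w_{r/p}$ on $\tilde\bfA^\dagger$ together with $w_r(p)=pr/(p-1)$. Pick $r_0$ large enough that the entries of $B$ lie in $p\cdot(A^+\widehat\otimes\tilde\bfA^{\dagger,r_0})$; then $w_{r_0}(Be_k)\geq pr_0/(p-1)$, and iterating the scaling yields $w_{p^nr_0}(\phi^n(Be_k))\geq p^{n+1}r_0/(p-1)$. The matrix product $\prod_{i=0}^{n-1}\phi^i(\id+B)$ has integral overconvergent entries, so its $w_{p^nr_0}$-valuation is bounded below by $0$; hence $w_{p^nr_0}(\delta_{n+1}^{(k)})\to\infty$ geometrically. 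Placing everything in the LF-structure $\tilde\bfB^\dagger_{\rig}=\lim\nolimits_\rightarrow\tilde\bfB^{\dagger,s}_{\rig}$, the partial sums $v_n^{(k)}$ converge to a limit $v^{(k)}\in A\widehat\otimes\tilde\bfB^\dagger_{\rig}$ which is $\Phi$-fixed by continuity of $\Phi$ and satisfies $v^{(k)}-e_k\in p\cdot(A^+\widehat\otimes\tilde\bfB^\dagger_{\rig})$.

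To conclude, the matrix of $(v^{(1)},\ldots,v^{(d)})$ expressed in the original basis is of the form $\id+pX$ with $X$ integral, hence invertible in $R$; so $\{v^{(k)}\}$ is an $R$-basis of $\tilde\Ncal$. Any $w\in\tilde\Ncal^{\Phi=\id}$ can be written uniquely as $w=\sum c_k v^{(k)}$ with $c_k\in R$, and $\Phi$-invariance of $w$ together with $\Phi(v^{(k)})=v^{(k)}$ forces $\sum(\phi(c_k)-c_k)v^{(k)}=0$, i.e. $\phi(c_k)=c_k$. By the corollary on $\phi$-invariants applied to the topological algebras in the footnote (affinoids of finite type, where the statement is a direct consequence, and completions of residue fields $k(x)$, where it follows by restricting to the fiber), one has $R^{\phi=\id}=A$; thus $c_k\in A$ and $\tilde\Ncal^{\Phi=\id}$ is the free $A$-module of rank $d$ generated by the $v^{(k)}$.

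The principal obstacle is the convergence analysis in the LF topology of $A\widehat\otimes\tilde\bfB^\dagger_{\rig}$: one must coordinate the nested integral rings $\tilde\bfA^{\dagger,r}$, the radius-shifting $\tilde\bfA^{\dagger,s}\to\tilde\bfA^{\dagger,ps}$ under $\phi$, and the effect of the completed tensor with $A^+$, to see that the geometric decay of $w_{p^nr_0}(\delta_{n+1}^{(k)})$ suffices to produce an honest limit in the target ring. This is essentially the content of Kedlaya-Liu's approximation lemma transferred to the present setting; once it is established, the basis argument and the passage to $\phi$-invariant coefficients are formal.
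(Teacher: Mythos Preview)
The paper does not prove this lemma at all; its proof reads in full ``This is \cite[Theorem 5.2]{KedlayaLiu}.'' Your sketch follows exactly the successive-approximation strategy that Kedlaya--Liu use, and the endgame (the $\id+pX$ matrix is invertible, then $R^{\phi=\id}=A$ forces the coefficients into $A$) is correct and matches their argument.

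The genuine gap is in your convergence step. You show $w_{p^nr_0}(\delta_{n+1})\geq p^{n+1}r_0/(p-1)$, but the valuation index $p^nr_0$ moves with $n$. Convergence in the LF space $A\widehat\otimes\tilde\bfB^\dagger_{\rig}$ means convergence in a \emph{fixed} Fr\'echet piece $\tilde\bfB^{\dagger,s}_{\rig}$, i.e.\ $w_{s'}(\delta_n)\to\infty$ for every $s'\geq s$ with $s$ fixed. Since $\phi$ sends $\tilde\bfA^{\dagger,r}$ into the strictly larger ring $\tilde\bfA^{\dagger,pr}$, your $\delta_{n+1}$ genuinely lives in $\tilde\bfA^{\dagger,p^nr_0}$ and need not lie in $\tilde\bfA^{\dagger,s}$ for any fixed $s$; your bound gives no control of $w_s(\delta_{n+1})$ once $p^nr_0>s$ (indeed the naive estimate for $w_s$ goes to $-\infty$). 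So the series is not shown to converge anywhere.

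You flag this yourself in the last paragraph, but the resolution you offer---``this is essentially the content of Kedlaya--Liu's approximation lemma''---is circular: the lemma you are proving \emph{is} their Theorem~5.2. In their actual argument the convergence is handled by a different bookkeeping (working in a ring where the iteration is genuinely contracting, e.g.\ controlling the $p$-adic and the $[\bar\pi]$-adic/weak-topology contributions simultaneously, rather than tracking $w_r$ with a sliding $r$). If you want a self-contained proof you must supply that analysis; otherwise your write-up amounts to the same citation the paper gives, decorated with the (correct) strategy.
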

 \begin{proof}This is \cite[Theorem 5.2]{KedlayaLiu}. \end{proof}
 \begin{cor}\label{admstbundercompletion}
 Let $X$ be an adic space locally of finite type over $\Q_p$ and $\tilde\Ncal$ be a family of $(\phi,\Gamma)$-modules over $\tilde\Bcal_{X,\rig}^\dagger$. Let $x$ in $X$, then 
 \[\dim_{\widehat{k(x)}}(\iota_x^{\ast}\tilde\Ncal)^{\rm \Phi=\id}=d\ \Longleftrightarrow \dim_{k(x)}\big((\Ncal\otimes_{\Bcal_{X,\rig}^\dagger}\tilde\Bcal_{X,\rig}^\dagger)\otimes k(x)\big)^{\Phi=\id} =d.\]
 \end{cor}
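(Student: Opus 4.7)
The strategy is to combine the Kedlaya--Liu approximation lemma (Lemma \ref{approxLemma}) with the identification $(\tilde\Bcal_{X,\rig}^\dagger)^{\phi=\id} = \Ocal_X$ from the corollary at the end of the previous section.

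The direction $(\Leftarrow)$ is essentially formal. If $(\Ncal\otimes_{\Bcal_{X,\rig}^\dagger}\tilde\Bcal_{X,\rig}^\dagger)\otimes k(x)$ contains $d$ linearly independent $\Phi$-fixed vectors, they remain linearly independent after the flat scalar extension $k(x) \hookrightarrow \widehat{k(x)}$, producing a subspace of $(\iota_x^*\tilde\Ncal)^{\Phi=\id}$ of dimension at least $d$. The latter is a vector space over $(\widehat{k(x)}\widehat\otimes\tilde\bfB_{\rig}^\dagger)^{\Phi=\id} = \widehat{k(x)}$ sitting inside a rank-$d$ ambient module, so its dimension is at most $d$. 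Equality follows.

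For the direction $(\Rightarrow)$, assume $\dim_{\widehat{k(x)}}(\iota_x^*\tilde\Ncal)^{\Phi=\id} = d$. The approximation lemma applied with $A = \widehat{k(x)}$ forces $\iota_x^*\tilde\Ncal$ to be trivialized, i.e., to admit a basis of $\Phi$-fixed vectors. I then approximate such a basis by elements of $\tilde\Ncal\otimes k(x)$: by perturbation of an invertible matrix, the approximating elements form a basis of the uncompleted fiber, and $\Phi$ acts on them as $\id + B'$ with $B' \in p \cdot \mathrm{Mat}_{d \times d}(\Acal_X^\dagger \otimes k(x)^+)$ after a suitable rescaling. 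Running the fixed-point iteration of the approximation lemma inside $\tilde\Ncal \otimes k(x)$ and tracking the coefficients should then yield $d$ linearly independent $\Phi$-invariants that actually lie in the uncompleted fiber.

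The main obstacle lies in this last step: the approximation lemma as stated produces invariants in the completed tensor product $A \widehat\otimes \tilde\bfB_{\rig}^\dagger$, but we need them to live in the uncompleted fiber $\tilde\Ncal \otimes k(x)$, which is merely a dense submodule. Overcoming this requires either a careful bookkeeping argument to show that the fixed-point iteration preserves the uncompleted submodule, or, alternatively, a descent argument exploiting that the completion map $\tilde\Bcal_{X,\rig}^\dagger \otimes k(x) \hookrightarrow \tilde\Bcal_{\rig,x}^\dagger$ is faithfully flat at the level of $\Phi$-invariants (which are essentially scalars by the corollary cited above). I would pursue the latter, reducing the descent of invariants to the faithfully flat descent of one-dimensional scalars.
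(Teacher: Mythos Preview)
Your approach---approximate a $\Phi$-invariant basis of the completed fiber by elements of the dense uncompleted fiber, then invoke Lemma~\ref{approxLemma}---is exactly the paper's, which defers to \cite[Proposition~8.20(i)]{families} for the details; the identical template reappears in the proof of Proposition~\ref{Xadmpartproper} below.

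Two small corrections. The perturbation $B'$ should have entries in the fiber of $\tilde\Acal_X^\dagger$, not $\Acal_X^\dagger$: Lemma~\ref{approxLemma} is formulated over the tilde rings, and the module in question is $\tilde\Ncal$. And no ``rescaling'' is needed---density of the uncompleted fiber in $\iota_x^\ast\tilde\Ncal$ already lets you choose the approximating basis so that $B'\in p\,\mathrm{Mat}_{d\times d}(k(x)^+\widehat\otimes\tilde\bfA^\dagger)$ directly.

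The obstacle you flag (that Lemma~\ref{approxLemma} a priori produces invariants only in a completed object) is real and is precisely the content hidden behind the citation. Of your two proposed resolutions, the first is closer to how such arguments are actually carried out: the iterative scheme of \cite[Theorem~5.2]{KedlayaLiu} builds the invariant basis as a limit of corrections lying in the $\tilde\bfA^\dagger$-span of the starting basis, so if that basis sits in the uncompleted fiber the limit does too, provided the fiber is complete for the period-ring topology (which it is, by construction of the sheaf $\tilde\Bcal_{X,\rig}^\dagger$). Your descent route via $(\tilde\Bcal_{X,\rig}^\dagger)^{\phi=\id}=\Ocal_X$ is not wrong, but it is more indirect and you would still need to verify that taking $\phi$-invariants commutes with passing to the fiber, which is essentially the same issue in another guise.
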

 \begin{proof}
 The proof is the same as the proof of \cite[Proposition 8.20 (i)]{families}.
\end{proof}
\begin{proof}[Proof of Theorem $\ref{thmXadm}$]
Let $x\in X^{\rm adm}$ and denote by $Z$ the Zariski-closure of $x$, that is, the subspace defined by the ideal of all functions vanishing at $x$. This is an adic space locally of finite type. Let $U\subset Z$ be a affinoid neighborhood of $x$ in $Z$ such that a basis of the $\Phi$-invariants extends to $U$. It follows from Lemma $\ref{approxLemma}$ that 
\[\Vcal_U=\big(\Ncal|_Z\otimes_{\Bcal_{Z,\rig}^\dagger}\tilde\Bcal_{U,\rig}^\dagger\big)^{\Phi=\id}\]
is free of rank $d$ over $\Ocal_U$. On this sheaf we have the diagonal $G_K$-action given by the natural action on $\tilde\Bcal_{U,\rig}^\dagger$ and the $\Gamma$-action on $\Ncal$. It is a direct consequence of the construction that 
\[{\bf D}_{\rm rig}^\dagger(\Vcal_U)=\Ncal|_U.\]
Especially we have shown that $X^{\rm adm}\subset X^{\rm int}$.

Now let $x\in X^{\rm adm}$ and let $U$ denote a neighborhood of $x$ to which we can lift a basis of $\Phi$-invariants. As $\Ncal$ is known to be \'etale, we can shrink $U$ such that we are in the situation of Lemma $\ref{approxLemma}$. 

It follows that $X^{\rm adm}$ is open and that 
\[\big(\Ncal\otimes_{\Bcal_{X,\rm rig}^\dagger}\tilde\Bcal_{X,\rm rig}^\dagger\big)^{\Phi=\id}\]
gives a vector bundle $\Vcal$ on $X^{\rm adm}$. Again, we have the diagonal action of $G_K$. As above we find that 
\[{\bf D}^\dagger_{\rig}(\Vcal)=\Ncal|_{X^{\rm adm}\cap X^{\rm int}}.\]
Finally (iii) is obvious by the construction of \cite{BergerColmez}.
 \end{proof}
 \begin{theo}
 Let $f:X\rightarrow Y$ be a morphism of adic spaces locally of finite type over $\Q_p$ with $Y$ reduced. Further let $\Ncal_Y$ be a family of $(\phi,\Gamma)$-modules over $\Bcal_{Y,\rig}^\dagger$ and write $\Ncal_X$ for the pullback of $\Ncal_Y$ to $X$. Then $f^{-1}(Y^{\rm adm})=X^{\rm adm}$ and $f^\ast \mathcal{V}_Y=\Vcal_X$ on $X^{\rm adm}$.
 \end{theo}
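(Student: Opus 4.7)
The plan is to reduce both statements to pointwise assertions on complete fibers of $\tilde\Ncal$, then invoke Corollary $\ref{admstbundercompletion}$ together with Kedlaya's slope theory. For any point $x\in X$ with image $y=f(x)\in Y$, the morphism $f$ induces a $\Phi$-equivariant map of complete period rings $\iota_y^\ast\tilde\Bcal_{Y,\rig}^\dagger\to \iota_x^\ast\tilde\Bcal_{X,\rig}^\dagger$, and the base change functoriality of $\tilde\Ncal$ gives a natural isomorphism
\[
\iota_x^\ast\tilde\Ncal_X\cong \iota_y^\ast\tilde\Ncal_Y\otimes_{\iota_y^\ast\tilde\Bcal_{Y,\rig}^\dagger}\iota_x^\ast\tilde\Bcal_{X,\rig}^\dagger.
\]
By Corollary $\ref{admstbundercompletion}$ applied to both $X$ and $Y$, the conditions $x\in X^{\rm adm}$ and $y\in Y^{\rm adm}$ translate to the maximality of the $\Phi$-invariants on the respective complete fibers.

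For the inclusion $f^{-1}(Y^{\rm adm})\subset X^{\rm adm}$: if $y\in Y^{\rm adm}$, then a $\widehat{k(y)}$-basis of $(\iota_y^\ast\tilde\Ncal_Y)^{\Phi=\id}$ trivializes $\iota_y^\ast\tilde\Ncal_Y$ as a $\phi$-module, using that $(\iota_y^\ast\tilde\Bcal_{Y,\rig}^\dagger)^{\phi=\id}=\widehat{k(y)}$ by the corollary on Frobenius invariants. Base changing this trivialization yields a $\widehat{k(x)}$-basis of $(\iota_x^\ast\tilde\Ncal_X)^{\Phi=\id}$, so $x\in X^{\rm adm}$. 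For the reverse inclusion I invoke Kedlaya's slope filtration theorem: the $\phi$-module $\iota_y^\ast\tilde\Ncal_Y$ has intrinsic Frobenius slopes, and $\dim_{\widehat{k(y)}}(\iota_y^\ast\tilde\Ncal_Y)^{\Phi=\id}$ equals the rank of its slope-zero part. The slopes are preserved under base change along $\widehat{k(y)}\hookrightarrow\widehat{k(x)}$, hence the slope-zero rank is preserved; so $x\in X^{\rm adm}$ forces $y\in Y^{\rm adm}$. This establishes (i).

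For (ii), the functoriality of the $\Phi$-invariants construction yields a natural morphism $f^\ast\Vcal_Y\to\Vcal_X$ of rank-$d$ vector bundles on $X^{\rm adm}$. It suffices to check that this is an isomorphism on fibers, which reduces to the claim that for $x\in X^{\rm adm}$ the natural map
\[
(\iota_y^\ast\tilde\Ncal_Y)^{\Phi=\id}\otimes_{\widehat{k(y)}}\widehat{k(x)}\longrightarrow (\iota_x^\ast\tilde\Ncal_X)^{\Phi=\id}
\]
is an isomorphism. Both sides are $d$-dimensional over $\widehat{k(x)}$ by the argument for (i), and the map is injective since it is obtained by flat base change from the injection $(\iota_y^\ast\tilde\Ncal_Y)^{\Phi=\id}\hookrightarrow\iota_y^\ast\tilde\Ncal_Y$ along the field inclusion $\widehat{k(y)}\hookrightarrow\widehat{k(x)}$. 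Hence it is an isomorphism, and $f^\ast\Vcal_Y\cong \Vcal_X$ on $X^{\rm adm}$.

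The main technical obstacle is the reverse inclusion in (i), namely the descent of admissibility from $x$ to $y=f(x)$: this hinges on the base-change invariance of Frobenius slopes for $\phi$-modules over complete Robba rings under extensions of the complete base field. It follows from Kedlaya's slope filtration theorem, but must be carefully transferred to the setting of the completed fibers $\iota_x^\ast\tilde\Bcal_{X,\rig}^\dagger$ and $\iota_y^\ast\tilde\Bcal_{Y,\rig}^\dagger$ considered here.
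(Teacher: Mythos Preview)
The paper's own proof is a one-line reference to \cite[Proposition~8.22]{families}, so a detailed comparison is not possible; your reduction to completed fibers via Corollary~\ref{admstbundercompletion} followed by a slope-theoretic descent is the natural strategy and is almost certainly what the cited argument does. The forward inclusion $f^{-1}(Y^{\rm adm})\subset X^{\rm adm}$ and the identification $f^\ast\Vcal_Y\cong\Vcal_X$ are handled correctly.

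For the reverse inclusion, however, your key assertion that $\dim_{\widehat{k(y)}}(\iota_y^\ast\tilde\Ncal_Y)^{\Phi=\id}$ equals the rank of the slope-zero part is not Kedlaya's slope filtration theorem itself and requires separate justification. The slope filtration theorem gives you the slope decomposition and the invariance of slopes under extension of the complete coefficient field $\widehat{k(y)}\hookrightarrow\widehat{k(x)}$; what you are using on top of this is that over $\widehat{k(y)}\widehat\otimes\tilde\bfB^\dagger_{\rig}$ every pure slope-zero (equivalently, \'etale) $\phi$-module is actually \emph{trivial}, i.e.\ admits a $\Phi$-fixed basis. Note that this is precisely the direction that can fail over the non-extended Robba ring: the paper itself emphasizes that $X^{\rm adm}\subsetneq X^{\rm int}$ in general, so ``\'etale'' alone does not imply ``full $\Phi$-invariants'' without passing to the tilde ring. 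Over $\widehat{k(y)}\widehat\otimes\tilde\bfB^\dagger_{\rig}$ the statement is true, but the argument is: an \'etale lattice over $\widehat{k(y)}^+\widehat\otimes\tilde\bfA^\dagger$ can be trivialized modulo $p$ because $\tilde\bfE$ is algebraically closed (so a Lang-type surjectivity of $g\mapsto g^{-1}\phi(g)$ on $\GL_d$ of the reduction is available), and then Lemma~\ref{approxLemma} upgrades a basis with $\Phi\equiv\id\pmod{p}$ to a genuine $\Phi$-fixed basis. You correctly flag this as ``the main technical obstacle'' but then only gesture at Kedlaya's theorem; to make the argument complete you should either carry out this trivialization step explicitly or cite the precise structure result for $\phi$-modules over the extended Robba ring with analytic-field coefficients (as in \cite{Kedlaya} or \cite{KedlayaLiu}) rather than the slope filtration theorem alone.
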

 \begin{proof}
 Using the discussion above, the proof is the same as the proof of \cite[Proposition 8.22]{families}.
 \end{proof}
 \begin{prop}\label{Xadmpartproper}
Let $X$ be a reduced adic space locally of finite type over $\Q_p$ and let $\Ncal$ be a family of $(\phi,\Gamma)$-modules over $\Bcal_{X,\rig}^\dagger$. Then the inclusion
\[f:X^{\rm adm}\longrightarrow X\]
is open and partially proper.
\end{prop}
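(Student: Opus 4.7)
Openness of the inclusion $f\colon X^{\rm adm}\hookrightarrow X$ has already been established in Theorem~\ref{thmXadm}(i), so the new content is partial properness. My plan is to verify Huber's valuative criterion: for every complete affinoid field $(K,K^+)$ and every open integrally closed subring $K^{++}\subseteq K^+$, every commutative diagram
\[
\begin{CD}
\Spa(K,K^+) @>>> X^{\rm adm}\\
@VVV @VVfV\\
\Spa(K,K^{++}) @>>> X
\end{CD}
\]
must admit a unique diagonal $\Spa(K,K^{++})\to X^{\rm adm}$. The space $Y:=\Spa(K,K^{++})$ has two points: its generic point $\eta=\Spa(K,K^+)$, which by hypothesis maps into $X^{\rm adm}$, and a single closed specialization $\xi$; the task is to show $\xi\in X^{\rm adm}$.

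By the base change compatibility of $X^{\rm adm}$ proved in the preceding theorem, after pulling $\Ncal$ back to $Y$ it suffices to show that $Y^{\rm adm}=Y$. The strategy is to apply Lemma~\ref{approxLemma} globally on $Y$ with $(A,A^+)=(K,K^{++})$. The hypothesis $\eta\in Y^{\rm adm}$ provides a basis $v_1,\dots,v_d$ of $\Phi$-fixed vectors in the fiber $(\Ncal_Y\otimes\tilde\Bcal_{Y,\rig}^\dagger)_\eta$; since $\tilde\Ncal:=\Ncal_Y\otimes\tilde\Bcal_{Y,\rig}^\dagger$ is locally free of rank $d$, these lift to a global basis of $\tilde\Ncal$ on $Y$. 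In this basis, $\Phi=\id+B$ with $B\in\mathrm{Mat}_{d\times d}(\tilde\Bcal_{Y,\rig}^\dagger)$ vanishing at $\eta$. Once we verify $B\in p\cdot\mathrm{Mat}_{d\times d}(K^{++}\widehat\otimes\tilde\bfA^\dagger)$, Lemma~\ref{approxLemma} yields a free $K$-module of $\Phi$-invariants of rank $d$ on $Y$, and reducing to the fiber at $\xi$ via Corollary~\ref{admstbundercompletion} shows $\xi\in Y^{\rm adm}$, as required.

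The principal obstacle is the integrality verification for $B$. A priori the construction at $\eta$ only gives entries in $p\cdot K^+\widehat\otimes\tilde\bfA^\dagger$, since $K^+$ (not $K^{++}$) is the natural integral structure at the generic point. The essential input is that $K^{++}$ is \emph{open} in $K^+$ (equivalently in $K$): using this, one can iteratively replace the chosen basis vectors $v_i$ by Frobenius iterates $\Phi^n(v_i)$ for $n$ large, shrinking the correction term $B$ by a factor of $p$ at each step, until its entries land in the required smaller ring $p\cdot K^{++}\widehat\otimes\tilde\bfA^\dagger$. This iterative mechanism is essentially the same Frobenius-iteration argument appearing in the construction of $\hat N$ in the proof of Proposition~\ref{exetalephimod}, and together with the application of Lemma~\ref{approxLemma} completes the verification of the valuative criterion.
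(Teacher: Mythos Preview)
Your reduction to the auxiliary space $Y=\Spa(K,K^{++})$ is where the argument breaks down. The base-change compatibility theorem you invoke is stated and proved only for morphisms between adic spaces \emph{locally of finite type over $\Q_p$}; for a general complete nonarchimedean field $K$, the space $\Spa(K,K^{++})$ is not of this form, and indeed the period sheaves $\tilde\Bcal_{Y,\rig}^\dagger$, $\tilde\Acal_Y^\dagger$ are only defined in the paper for such spaces. So the sentence ``it suffices to show $Y^{\rm adm}=Y$'' is not justified. Moreover, even if one grants the constructions on $Y$, the ``integrality obstacle'' you describe does not arise: since $K^{++}$ is open in $K^+$ one has $p^nK^+\subset K^{++}$ for some $n$, hence $(K^{++}\widehat\otimes\tilde\bfA^\dagger)[1/p]=(K^{+}\widehat\otimes\tilde\bfA^\dagger)[1/p]$, and the rational fibers of $\tilde\Ncal$ at $\eta$ and $\xi$ literally coincide. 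There is nothing to iterate, and in any case replacing $v_i$ by $\Phi^n(v_i)$ does not shrink $B$ in the way you suggest (to first order $\Phi^n=\id+nB+\cdots$); this is not the mechanism behind Lemma~\ref{approxLemma} or the construction in Proposition~\ref{exetalephimod}.

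The paper avoids all of this by working directly in $X$. Using Huber's form of the valuative criterion, one takes $x\in X^{\rm adm}$ and a center $y\in X$ of a valuation of $k(x)$; then $y$ is a specialization of $x$ and the inclusion $k(y)\hookrightarrow k(x)$ has dense image. Consequently $\tilde\Ncal_y\hookrightarrow\tilde\Ncal_x$ is dense, and one can \emph{approximate} the $\Phi$-fixed basis of $\tilde\Ncal_x$ by a basis of $\tilde\Ncal_y$ on which $\Phi$ acts as $\id+A$ with $A\in p\,{\rm Mat}_{d\times d}(\tilde\Acal_{k(y)}^\dagger)$. Lemma~\ref{approxLemma} and Corollary~\ref{admstbundercompletion} then give $y\in X^{\rm adm}$. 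The density/approximation step is the genuine idea here; your proposal replaces it with a base change that is not available and an iteration that does not do what is claimed.
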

\begin{proof}
We have already shown that $f$ is open. Especially it is quasi-separated and hence we may apply the valuative criterion for partial properness, see \cite[1.3]{Huber}. 
Let $(x,A)$ be a valuation ring of $X$ with $x\in X^{\rm adm}$ and let $y\in X$ be a center of $(A,x)$. We need to show that $y\in X^{\rm adm}.$ As $y$ is a specialization of $x$, the inclusion $i:k(y)\hookrightarrow k(x)$ identifies $k(y)$ with a dense subfield of $k(x)$. Especially 
\[\tilde\Ncal_y:=\Ncal\otimes_{\Bcal_{k(y),\rig}^\dagger}\tilde\Bcal_{k(y),\rig}^\dagger\longrightarrow \Ncal\otimes_{\Bcal_{k(x),\rig}^\dagger}\tilde\Bcal_{k(x),\rig}^\dagger=:\tilde\Ncal_x\]
is dense. Let $e_1,\dots,e_d$ be a basis of $\tilde\Ncal_x$ on which $\Phi$ acts as the identity. We may approximate this basis by a basis of $\tilde\Ncal_y$. Thus we can choose a basis of $\tilde\Ncal_y$ on which $\Phi$ acts by $\id+A$ with 
\[A\in {\rm Mat}_{d\times d}(\tilde\Bcal_{k(y),\rig}^\dagger)\]
sufficently small. For example we can choose 
\[A\in p\, {\rm Mat}_{d\times d}(\tilde\Acal_{k(y)}^\dagger).\]
By Lemma $\ref{approxLemma}$ and Corollary $\ref{admstbundercompletion}$ it follows that $y\in X^{\rm adm}$.
\end{proof}

 \subsection{Existence of Galois representations}
 In this section we link deformations of Galois representations and deformations of \'etale $\phi$-modules. 

In the following $(R,\mfrak)$ will denote a complete local noetherian ring, topologically of finite type over $\Z_p$.  As above we have the notion of an \'etale $\phi$-module over \[R\widehat{\otimes}_{\Z_p}{\bf A}_K=\lim\nolimits_\leftarrow \big((R/\mfrak^n)\otimes_{\Z_p}\bfA_K\big).\]

A Galois representation with coefficients in $R$ (or a family of Galois representations on $\Spf R$) is a continuous representation 
\[G\longrightarrow \GL_d(R),\]
where $G$ is the absolute Galois group of some field $L$.
The relation between Galois representations and \'etale $\phi$-modules with coefficients in local rings was first considered by Dee, see \cite[2]{Dee}.

\begin{theo}\label{exgalreptheo}
Let $X$ be a reduced adic space of finite type over $\Q_p$ and and let $(\Ncal,\Phi)$ be a family of \'etale $\phi$-modules over $\Bcal_{X,\rig}^\dagger$. Let $x_0\in \bar \Xcal$ be a closed point in the special fiber of some formal model $\Xcal$of $X$ and let $Y\subset X$ denote the tube of $x_0$. Then $(\Ncal,\Phi)|_Y$ is associated to a family of $H_K$-representations on the open subspace $Y$.
\end{theo}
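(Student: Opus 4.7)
My plan is to combine Proposition \ref{exetalephimod} with Dee's equivalence between étale $\phi$-modules over complete local rings and continuous $H_K$-representations. Proposition \ref{exetalephimod} produces a coherent étale $\phi$-submodule $\hat N\subset\hat\Ncal$ over $\Acal_{X,K}$, where $\hat\Ncal$ is the $\phi$-module over $\Bcal_{X,K}$ induced from $(\Ncal,\Phi)$. Choose a formal affine open $\Spf A^+\subset \Xcal$ containing $x_0$, let $\mfrak\subset A^+$ be the maximal ideal cutting out $x_0$, and set $R=\widehat{A^+_\mfrak}$; this is a complete local Noetherian $\Z_p$-algebra with (finite) residue field $k(x_0)$. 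The tube $Y$ is the union of the ascending chain of affinoid opens $Y_n=\{x\in X\mid |f(x)|\leq |p|^{1/n}\text{ for all }f\in\mfrak\}$, and is identified with the Berthelot-Raynaud generic fiber of $\Spf R$.

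Completing $\hat N$ $\mfrak$-adically yields a finitely generated étale $\phi$-module $\hat N_R$ over $R\widehat\otimes_{\Z_p}\bfA_K=\varprojlim_n\bigl((R/\mfrak^n)\otimes_{\Z_p}\bfA_K\bigr)$. Dee's equivalence \cite{Dee}, obtained level-by-level on the Artinian truncations $R/\mfrak^n$ and passed to the limit, then attaches to $\hat N_R$ a continuous $H_K$-representation $V_R$ on a free $R$-module of rank $d=\rk\Ncal$. Extending scalars along the natural maps $R\to\Gamma(Y_n,\Ocal_{Y_n})$ produces vector bundles with continuous $H_K$-action on each $Y_n$ which glue to a family $\Vcal$ of $H_K$-representations on the open subspace $Y$.

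To conclude, I need to verify the canonical isomorphism
\[\bigl(\Vcal\otimes_{\Ocal_Y}\tilde\Bcal_{Y,\rig}^\dagger\bigr)^{H_K}\cong \Ncal|_Y,\]
which identifies $\Vcal$ as the sought-for family. Both sides are $\phi$-modules over $\Bcal_{Y,\rig}^\dagger$ with compatible $H_K$-actions, and after pullback to a rigid analytic point of $Y$ the isomorphism reduces to the classical Fontaine equivalence together with the construction of $V_R$ via Dee; the pointwise identification is then upgraded to the global one via the injectivity of sections of the period sheaves established in Section 2.

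The main obstacle will be bridging the two formalisms: Dee's purely algebraic construction over the complete local ring $R$, and the overconvergent Berger-Colmez framework over the adic space $Y$. Concretely, the $H_K$-action produced by Dee is a priori continuous only for the $\mfrak$-adic topology on $R$, while the identification above requires compatibility with the canonical continuous $H_K$-action on the overconvergent period sheaf $\tilde\Bcal_{Y,\rig}^\dagger$. One must therefore promote Dee's $\bfA_K$-level construction to the overconvergent ring $\tilde\bfA^\dagger$, presumably by an argument in the spirit of \cite{BergerColmez} and \cite{KedlayaLiu} that directly exploits the étale lattice of $\hat N$ already available in $\Acal_X^\dagger$ from Proposition \ref{exetalephimod}.
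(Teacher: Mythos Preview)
Your core strategy---Proposition~\ref{exetalephimod} to get a coherent \'etale $\Acal_{X,K}$-submodule, pass to the $\mfrak$-adic completion $R$ of a formal affine neighborhood of $x_0$, invoke Dee over $R\widehat\otimes_{\Z_p}\bfA_K$, then sheafify over the tube $Y$---is exactly the paper's proof. Two remarks are in order, though.

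First, the claim that $V_R$ is \emph{free} of rank $d$ over $R$ is not justified and in general false: Proposition~\ref{exetalephimod} only produces a \emph{coherent} \'etale $\phi$-module $\hat N$, not a locally free one, so Dee's equivalence yields only a finitely generated $R$-module $E$. The paper is careful about this (see the footnote in the proof): $E$ is merely $p$-isogenous, locally on $Y$, to the Galois lattice coming from a genuine local \'etale $\Acal^\dagger$-lattice in $\Ncal$; this is what guarantees that $E\otimes_R\Ocal_Y$ is a rank-$d$ vector bundle on $Y$, not any freeness of $E$ itself.

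Second, the paper simply stops after constructing the family and does not carry out the overconvergent comparison ${\bf D}_{\rig}^\dagger(\Vcal)\cong\Ncal|_Y$ that you set up as the final step. Your ``main obstacle'' is therefore self-imposed: the only use of Theorem~\ref{exgalreptheo} is to feed into the Corollary, where one merely needs that every point of $Y$ lies in $Y^{\rm adm}$, i.e.\ that the $\Phi$-invariants in $\tilde\Bcal^\dagger_{\rig}$ have the right dimension fiberwise. Dee's construction already gives, at each rigid point $y\in Y$, a $d$-dimensional $\Phi$-invariant subspace inside $\hat\Ncal_y\otimes\tilde\bfB$; by classical overconvergence this agrees with the $\Phi$-invariants in $\Ncal_y\otimes\tilde\bfB_{\rig}^\dagger$, and openness plus partial properness of the admissible locus then finishes. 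You do not need to promote Dee's construction to the overconvergent setting in families.
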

\begin{proof}
It follows from Proposition $\ref{exetalephimod}$ that there exist an \'etale $\phi$-module $\hat N$ over $\Acal_{X,K}$ such that $\hat N\subset \hat\Ncal$ as $\phi$-modules and such that $\hat N$ contains a basis of $\hat\Ncal$. Here $\hat\Ncal$ is the $\Bcal_{X,K}$-module induced from $\Ncal$. 
Choose an affine neighborhood $\Ucal=\Spf(A^+)$ of $x_0$ and write $U$ for its generic fiber. We write $\mfrak\subset A^+$ for the maximal ideal defining $x_0$ and write $R$ for the $\mfrak$-adic completion of $A^+$. Then $Y$ is the generic fiber of $\Spf R$. Write $\Nfrak=\Gamma(U,\hat N)$, then this is a finitely generated $\Gamma(U,\Acal_{X,K})$-module on which $\hat\Phi$ induces a semi-linear isomorphism. Especially $\hat\Nfrak=\Nfrak\widehat{\otimes}_{A^+} R$ is a finitely generated \'etale  $\phi$-module over $\Gamma(Y,\Acal_{X,K})=R\widehat{\otimes}_{\Z_p}{\bf A}_K$. Hence, by \cite{Dee},  there is a finitely generated $R$-module $E$ with continuous $H_K$ action associated with $\hat \Nfrak$. Then 
\[Y\supset V\mapsto E\otimes_R\Gamma(V,\Ocal_X)\]
defines the desired family of Galois representations\footnote{Note that we do not claim that locally on $Y$ the integral representation $E$ is associated with an \'etale lattice in $(\Ncal,\Phi)$. This is only true up to $p$-isogeny.} on $Y$ .
\end{proof}
\begin{cor}
Let $X$ be a reduced adic space locally of finite type over $\Q_p$ and $\Ncal$ be a family of \'etale $(\phi,\Gamma)$-modules on $X$. Let $x_0\in \bar \Xcal$ be a closed point in the special fiber of some formal model $\Xcal$ of $X$ and let $Y\subset X$ denote the tube of $x_0$. Then $\Ncal|_Y$ is associated to a family of $G_K$-representations on the open subspace $Y$.
\end{cor}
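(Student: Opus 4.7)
The strategy is to run the proof of Theorem \ref{exgalreptheo} $\Gamma$-equivariantly. Starting from the underlying $\phi$-module of $\Ncal$, Theorem \ref{exgalreptheo} produces on an affine neighborhood $\Ucal=\Spf A^+$ of $x_0$ with $\mfrak$-adic completion $R$ a finitely generated étale $\phi$-module $\hat\Nfrak$ over $R\widehat\otimes_{\Z_p}\bfA_K$ and, via Dee's equivalence \cite{Dee}, a finitely generated $R$-module $E$ with continuous $H_K$-action such that the sheaf $V\mapsto E\otimes_R\Gamma(V,\Ocal_X)$ on $Y$ realizes the underlying $\phi$-module of $\Ncal|_Y$. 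What remains is to upgrade the $H_K$-action on $E$ to a continuous $G_K$-action compatible with $\Ncal|_Y$.

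The key observation is that the sheaf $\Acal_{X,K}$ carries a natural continuous $\Gamma$-action commuting with $\phi$, and the $\Gamma$-action on $\Ncal$ restricts to a $\Gamma$-action on the étale $\Acal_{X,K}$-lattice $\hat N$ constructed via Proposition \ref{exetalephimod}. Completing along $\mfrak$, the module $\hat\Nfrak$ thus acquires the structure of a finitely generated étale $(\phi,\Gamma)$-module over $R\widehat\otimes_{\Z_p}\bfA_K$, whose $\Gamma$-action is continuous for the $\mfrak$-adic topology since it already is on $\Nfrak$. Now Dee's theorem actually yields an equivalence between étale $(\phi,\Gamma)$-modules over $R\widehat\otimes_{\Z_p}\bfA_K$ and continuous representations of $G_K = H_K\rtimes\Gamma$ (in the sense of the short exact sequence $1\to H_K\to G_K\to \Gamma\to 1$) on finitely generated $R$-modules; this is exactly how the classical Fontaine equivalence is upgraded by incorporating $\Gamma$. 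Applying this version of Dee's result to $\hat\Nfrak$ endows the same underlying $R$-module $E$ with a continuous $G_K$-action extending the $H_K$-action already produced.

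Sheafifying via $V\mapsto E\otimes_R\Gamma(V,\Ocal_X)$ for $V\subset Y$ open then defines the desired family $\Vcal$ of $G_K$-representations on $Y$, and the identification $\Ncal|_Y\cong \mathbf{D}^\dagger_{\rig}(\Vcal)|_Y$ follows tautologically from the construction, since the $(\phi,\Gamma)$-module attached to $E$ through the $(\phi,\Gamma)$-variant of Dee's equivalence is by definition $\hat\Nfrak$, which recovers $\Ncal|_Y$ after passing to the Robba ring. The main (and essentially only) obstacle is the $(\phi,\Gamma)$-enhanced form of Dee's theorem with coefficients in a complete local noetherian $\Z_p$-algebra; this, however, is a formal consequence of Dee's original equivalence together with the observation that a continuous $\Gamma$-action on an étale $\phi$-module corresponds under the equivalence to a continuous action of $\Gamma$ on the associated $H_K$-representation, which combines with the $H_K$-action into a continuous $G_K$-action because the extension $1\to H_K\to G_K\to \Gamma\to 1$ is topological.
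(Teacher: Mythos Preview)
Your route differs from the paper's and has a gap at the pivotal step. The paper's proof is two lines: Theorem~\ref{exgalreptheo} shows that the underlying $\phi$-module of $\Ncal|_Y$ comes from a family of $H_K$-representations on $Y$; unwinding the definition of $X^{\rm adm}$ this forces the $\Phi$-invariants of $\Ncal\otimes\tilde\Bcal^\dagger_{X,\rig}$ to have the correct rank at every point of $Y$, so $Y=Y^{\rm adm}$, and Theorem~\ref{thmXadm} then produces the $G_K$-family directly. The $G_K$-action arises from the diagonal action on $\big(\Ncal\otimes\tilde\Bcal^\dagger_{X,\rig}\big)^{\Phi=\id}$, so no $\Gamma$-stable integral lattice is ever needed.

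Your argument instead asserts that ``the $\Gamma$-action on $\Ncal$ restricts to a $\Gamma$-action on the \'etale $\Acal_{X,K}$-lattice $\hat N$ constructed via Proposition~\ref{exetalephimod}''. But nothing in that construction respects $\Gamma$: one starts from an arbitrary coherent submodule $N_0$ supplied by Proposition~\ref{exofcoherentmodules}, iterates under $\Phi$ alone, and completes. The resulting $\hat N$ has no reason to be $\Gamma$-stable. You can attempt a repair by passing to the $\Gamma$-saturation $\hat N':=\sum_{\gamma\in\Gamma}\gamma\hat N$; compactness of $\Gamma$ together with continuity of the action will give coherence, and $\hat N'$ is visibly $\Phi$-stable since $\Phi$ commutes with $\Gamma$, but you must then re-establish that $\hat\Phi(\phi^\ast\hat N')=\hat N'$ rather than merely an inclusion, i.e.\ rerun the fiberwise argument at the end of the proof of Proposition~\ref{exetalephimod} for this larger module. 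This is doable, but it is genuine work, not the ``key observation'' you present it as. The paper's route sidesteps the issue entirely by recycling the $X^{\rm adm}$ machinery already built in Theorem~\ref{thmXadm}.
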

\begin{proof}
By the above theorem it follows that $Y=Y^{\rm adm}$. The claim follows from Theorem $\ref{thmXadm}$.
\end{proof}
\begin{conj}\label{Conjecture}
The claim of the theorem (and the corollary) also holds true if we replace $x_0$ by a (locally) closed subscheme of the special fiber over which there exists a Galois representation that is locally associated with the reduction of an \'etale lattice.
\end{conj}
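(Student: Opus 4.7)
The plan is to extend the proof of Theorem \ref{exgalreptheo} by replacing the $\mfrak$-adic completion along a closed point with the $I$-adic completion along the defining ideal $I$ of the locally closed subscheme $Z$. The additional hypothesis of the conjecture --- existence of a Galois representation over $Z$ matching the reduction of an \'etale lattice --- supplies the base case of a deformation-theoretic induction, providing precisely the input that Dee's theorem cannot produce over a non-local coefficient ring.

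First, I would reduce to a local situation: choose a formal model $\Xcal$ and an affine open $\Ucal = \Spf A^+ \subset \Xcal$ such that $Z \cap \bar\Ucal$ is closed in $\bar\Ucal$, cut out by an ideal $I \subset A^+$. Let $R$ denote the $I$-adic completion of $A^+$, so that the $\Ucal$-part of $Y$ is the generic fiber of $\Spf R$. Applying Proposition \ref{exetalephimod} produces an \'etale $\phi$-module $\hat N \subset \hat\Ncal$ over $\Acal_{X,K}$ containing a basis, and setting $\Nfrak = \Gamma(U, \hat N)$, $\hat\Nfrak = \Nfrak \widehat{\otimes}_{A^+} R$ yields a finitely generated \'etale $\phi$-module over $R \widehat{\otimes}_{\Z_p} \bfA_K$.

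Second, I would define the Galois module by Dee's recipe,
\[ E = \bigl( \hat\Nfrak \otimes_{R \widehat{\otimes} \bfA_K} (R \widehat{\otimes} \tilde\bfA) \bigr)^{\phi = \id}, \]
with its continuous diagonal $H_K$-action, and try to show that $E$ is finitely generated of the expected rank over $R$ and recovers $\hat\Nfrak$ as $(E \otimes \tilde\bfA)^{H_K}$. The natural approach is truncation: the same formula over $R/I^n$ defines $E_n$, the system $(E_n)_n$ is compatible, and $E = \varprojlim_n E_n$ is finitely generated over $R$ by $I$-adic completeness. The base case $E_1$ should be identified, via Dee's theorem applied pointwise at the closed points of $\Spec R/I$, with the $H_K$-representation supplied by the hypothesis. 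Assuming the inductive step from $E_n$ to $E_{n+1}$ succeeds, sheafifying by $V \mapsto E \otimes_R \Gamma(V, \Ocal_X)$ produces the desired family of $H_K$-representations on $Y$; the $(\phi, \Gamma)$-version then follows as in the proof of the corollary above, because existence of this family forces $Y \subset X^{\rm adm}$, after which Theorem \ref{thmXadm} delivers the family of $G_K$-representations.

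The main obstacle is the inductive step: establishing Dee's equivalence over complete $I$-adic Noetherian rings, not merely over complete local ones. Over such a base the correspondence between \'etale $\phi$-modules and continuous $H_K$-representations is no longer a formal consequence of Fontaine's theory, because the ``residue'' $R/I$ is the coordinate ring of $Z$ rather than a finite field. The hypothesis bridges this gap at the first level, and one expects the lift from $E_n$ to $E_{n+1}$ to be unobstructed since on the $\phi$-module side $\hat\Nfrak / I^{n+1} \hat\Nfrak$ is a canonical lift of $\hat\Nfrak / I^n \hat\Nfrak$ as an \'etale $\phi$-module. Making this precise --- uniformly controlling the $R/I^n$-rank of $E_n$, verifying continuous $H_K$-equivariance, and checking that the construction glues independently of the choices of $\Ucal$ and formal model --- is the heart of what the conjecture leaves open.
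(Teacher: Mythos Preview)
The statement you are attempting to prove is labelled in the paper as a \emph{Conjecture}, and the paper provides no proof of it; it is left open and only invoked hypothetically in the final remark. So there is no ``paper's own proof'' to compare against.

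Your proposal is a reasonable outline of how one would \emph{try} to establish the conjecture by mimicking the proof of Theorem~\ref{exgalreptheo}, and you have correctly identified the crux: Dee's equivalence in \cite{Dee} is proved for complete local Noetherian coefficient rings, and the step that breaks down when $x_0$ is replaced by a positive-dimensional closed subscheme $Z$ is precisely that $R/I$ is no longer a finite field, so the base case of the $\varprojlim$-argument is not available for free. Your suggestion to use the hypothesised Galois representation over $Z$ as the base case $E_1$ and then lift through nilpotent thickenings is the natural plan, but, as you yourself note in the final paragraph, you have not carried it out: controlling the $R/I^n$-rank of $E_n$ uniformly and showing the equivalence over each $R/I^n$ requires extending Fontaine--Dee theory beyond the Artin-local setting, which is exactly the content the conjecture is asking for. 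In other words, your proposal is a correct diagnosis of what would need to be proved, not a proof; this matches the status the paper assigns to the statement.
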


 \subsection{Local constancy of the reduction modulo $p$}
Let $L$ be a finite extension of $\Q_p$ with ring of integers $\Ocal_L$, uniformizer $\varpi_L$ and residue field $k_L$. Let $V$ be a $d$-dimensional $L$-vector space with a continuous action of a compact group $G$.
We choose a $G$-stable $\Ocal_L$-lattice $\Lambda\subset V$ and write $\bar\Lambda=\Lambda/\varpi_L\Lambda$ for the reduction modulo the maximal ideal of $\Ocal_L$. Then $\bar\Lambda$ is a (continuous) representation of $G$ on a $d$-dimensional $k_L=\Ocal_L/\varpi_L\Ocal_L$-vector space. The representation $\bar\Lambda$ depends on the choice of a $G$-stable lattice $\Lambda\subset V$, however it is well known that its semisimplification $\bar\Lambda^{\rm ss}$(i.e. the direct sum of its Jordan-H\"older constituents) is independent of $\Lambda$ and hence only depends on the representation $V$. In the following we will write $\bar V$ for this representation and refer to it as the reduction modulo $\varpi_L$ of the representation $V$. 

The aim of this section is to show that the reduction modulo $\varpi_L$ is locally constant in a family\footnote{This seems to be a well known fact, ay least in the context of pseudo-characters. As we do not want to assume $p>d$ here, we give a different proof} of $p$-adic representations of $G$.  In the context of families of Galois representations this was shown by Berger for families of $2$-dimensional crystalline representations of  ${\rm Gal}(\bar\Q_p/\Q_p)$ in a weaker sense: Berger showed that every rigid analytic point has a neighborhood on which the reduction is constant, see \cite{Berger2}.

Let $X$ be an adic space locally of finite type over $\Q_p$ and $E$ a vector bundle on $X$ endowed with a continuous $G$-action. If $x\in X$, then we write 
\[\big (E\otimes \overline{k(x)}\big)=\big(\overline{E\otimes k(x)}\big)^{\rm ss}\]
for the semisimplification of the $G$-representation in the special fiber $\overline{k(x)}=k(x)^+/(\varpi_x)$ of $k(x)$.

We first claim that (up to semisimplification) there are no nontrivial families of representations of a finite group on varieties over $\Fbb_p$.
\begin{prop}\label{famofHrep} Let $H$ be a finite group. 
Let $X$ be a connected $\Fbb_p$-scheme of finite type and $\Ecal$ a vector bundle on $X$ endowed with an $H$-action. Then there is a semi-simple $H$-representation $E$ on a finite dimensional $\bar\Fbb_p$-vector space such that for all $x\in X$ there is an isomorphism of $H$-representations
\[\big((\Ecal\otimes k(x))\otimes_{k(x)}\bar\Fbb_p\big)^{\rm ss}\cong E.\]
\end{prop}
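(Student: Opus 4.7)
The plan is to reduce the statement to the following claim: for every element $h\in H$ of order $n$ coprime to $p$ (a $p$-regular element), the multiset of eigenvalues of $h$ acting on $(\Ecal\otimes k(x))\otimes_{k(x)}\bar\Fbb_p$ is independent of $x\in X$. Once this is done for every such $h$, the existence of a common semisimple class $E$ follows from Brauer character theory: a semisimple $\bar\Fbb_p[H]$-module is determined by its Brauer character, and the value of this character at $h$ is $\sum_\zeta m_\zeta\tilde\zeta$ (with $\tilde\zeta$ the Teichm\"uller lift of the $n$-th root of unity $\zeta$), so it depends only on the multiplicities $m_\zeta$ with which the $n$-th roots of unity appear as eigenvalues of $h$.

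To prove the key claim I would first replace $X$ by $X_{\rm red}$ (harmless, since the residue fields $k(x)$ and the fibers of $\Ecal$ are unchanged), and for each $p$-regular $h$ consider its characteristic polynomial $\chi_h(T)=\dete(T\cdot\id -h)\in\Gamma(X,\Ocal_X)[T]$; this is intrinsic, hence globally well-defined even though $\Ecal$ need not be trivial. At each $x$, every coefficient $c_i$ evaluates to an elementary symmetric polynomial in $n$-th roots of unity, so $c_i(x)\in\Fbb_{p^a}$ with $a=[\Fbb_p(\mu_n):\Fbb_p]$. Therefore $c_i^{p^a}-c_i$ vanishes at every point of $X$, and reducedness upgrades this to $c_i^{p^a}=c_i$ inside $R:=\Gamma(X,\Ocal_X)$. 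The subalgebra $\Fbb_p[c_i]\subset R$ is then a quotient of $\Fbb_p[T]/(T^{p^a}-T)$, a product of finite fields; but connectedness of $X$ prevents $R$ from having nontrivial idempotents, so $\Fbb_p[c_i]$ must be a single finite field $\Fbb_{p^{b_i}}$ with $b_i\mid a$. Hence the relative algebraic closure of $\Fbb_p$ in $R$ is a subfield $L\subset R$, and all coefficients of $\chi_h$ lie in $L$.

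Fix once and for all an embedding $L\hookrightarrow\bar\Fbb_p$; under it, $\chi_h$ becomes a fixed polynomial $\bar\chi_h(T)\in\bar\Fbb_p[T]$. For each $x\in X$, the map $L\hookrightarrow R\to k(x)$ is an embedding of fields, and one can choose the embedding $k(x)\hookrightarrow\bar\Fbb_p$ so that the composite $L\to k(x)\to\bar\Fbb_p$ agrees with the fixed one. Under this compatible choice the reduced polynomial $\chi_h(x)(T)$ maps to $\bar\chi_h(T)$, and so the eigenvalue multiset of $h$ on $(\Ecal\otimes k(x))\otimes_{k(x)}\bar\Fbb_p$ equals the constant multiset of roots of $\bar\chi_h$ in $\bar\Fbb_p$. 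Running this for all $p$-regular $h$ and invoking Brauer as above yields the desired $E$.

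The main obstacle is this last piece of embedding bookkeeping: an a priori different choice of $k(x)\hookrightarrow\bar\Fbb_p$ produces a Galois twist of the fiber representation, so what needs to be established is the genuine independence of the semisimple class rather than independence merely up to a Galois action on $\bar\Fbb_p$. The characteristic-polynomial argument sketched above is tailored to deal with exactly this point: by insisting that every $k(x)\hookrightarrow\bar\Fbb_p$ extends one fixed embedding of the field of constants $L\subset R$, the scheme-theoretic constancy of the coefficients $c_i$ translates into a pointwise equality of eigenvalue multisets in $\bar\Fbb_p$, which is precisely the input Brauer character theory needs.
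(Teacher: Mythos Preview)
Your proof is correct and follows the same strategy as the paper: show that the characteristic polynomial of each $h\in H$ acting on $\Ecal$ is constant along the connected scheme $X$, then deduce the result from the fact that these polynomials determine the semisimplification over $\bar\Fbb_p$ (the paper cites \cite[Theorem~30.16]{CurtisReiner}, which is precisely the Brauer-character input you invoke). The only differences are cosmetic---the paper argues that $x\mapsto{\rm charpoly}(h\mid\Ecal\otimes k(x))$ defines a morphism $X\to\Abb^d$ with finite image, hence constant, in place of your idempotent argument---together with your explicit discussion of how to choose the embeddings $k(x)\hookrightarrow\bar\Fbb_p$ compatibly with a fixed embedding of the field of constants, a point the paper leaves implicit.
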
 
\begin{proof}
For $h\in H$ consider the morphism 
\[ f_h:x\longmapsto {\rm charpoly}\big( h|\Ecal\otimes k(x)\big)\in k(x)^d,\]
mapping $x\in X$ to the coefficients of the characteristic polynomial of $h$ acting on $\Ecal\otimes k(x)$, where $d$ is the rank of $\Ecal$. This gives a morphism of schemes $X\rightarrow \Abb^d$.
As there are only finitely many isomorphism classes of semi-simple $H$-representations of fixed rank (there are only finitely many irreducible representations), this map has finite image and hence it has to be constant, as $X$ is connected.
It follows that for all $h\in H$ we have the equality
\[{\rm charpoly}(h|\Ecal\otimes k(x))={\rm charpoly}(h|\Ecal\otimes k(y))\]
for all $x,y\in X$. Then \cite[Theorem 30.16]{CurtisReiner} implies the claim.
\end{proof}
\begin{lem}
Let $X$ be an adic space locally of finite type and $E$ be a vector bundle on $X$ endowed with a continuous action of a compact group $G$. Then locally on $X$ there exists a $G$-stable $\Ocal_X^+$-lattice $E^+\subset E$.
\end{lem}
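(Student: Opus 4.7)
The plan is to construct a $G$-stable lattice locally by summing translates of the tautological lattice in a local trivialization of $E$. First, I would shrink $X$ to an open affinoid neighborhood $U$ of the given point on which $E$ trivializes; fix an isomorphism $E|_U \cong \Ocal_U^d$ and let $\Lambda_0 := (\Ocal_U^+)^d$. Setting $A = \Gamma(U,\Ocal_U)$ and $A^+ = \Gamma(U,\Ocal_U^+)$, continuity of the representation $\rho \colon G \to \GL_d(A)$, combined with openness of $A^+ \subset A$ (and hence openness of $\GL_d(A^+) \subset \GL_d(A)$, since inversion is continuous on $\GL_d(A)$), implies that the stabilizer
\[
H := \{g \in G \mid \rho(g)\Lambda_0 = \Lambda_0\} = \rho^{-1}(\GL_d(A^+))
\]
is an open subgroup of $G$. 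Compactness of $G$ then forces $H$ to have finite index.

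Next, choosing coset representatives $g_1,\dots,g_n$ for $G/H$, I would put
\[
\Lambda := \sum_{i=1}^n \rho(g_i)(\Lambda_0) \subset E|_U.
\]
This is a finitely generated $\Ocal_U^+$-submodule of $E|_U$, independent of the choice of representatives (since $\rho(h)\Lambda_0 = \Lambda_0$ for $h\in H$), and $G$-stable by the direct calculation $\rho(g)\rho(g_i)\Lambda_0 = \rho(g_j)\rho(h)\Lambda_0 = \rho(g_j)\Lambda_0$ whenever $gg_i = g_j h$ with $h \in H$. Moreover, since there are only finitely many $\rho(g_i)$, their matrix entries have uniformly bounded denominators, so $\Lambda_0 \subset \Lambda \subset p^{-N}\Lambda_0$ for some $N \geq 0$.

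The key remaining step is to verify that $\Lambda$ is actually a lattice, i.e.\ locally free of rank $d$, possibly after further shrinking of $U$. By Lemma \ref{latticeiftorsfree} it suffices to check that the fibers $\Lambda \otimes k(x)^+$ are $\varpi_x$-torsion free at all rigid points of $U$. Equivalently, the sandwiching $\Lambda_0 \subset \Lambda \subset p^{-N}\Lambda_0$ shows that $\Lambda$ is represented by a coherent sheaf $\mathcal{L}$ on some formal model $\mathfrak{U}$ of $U$ with $\mathcal{L}[1/p] = E|_U$, and I would invoke the flattening theorem of Raynaud--Gruson (in the admissible formal setting) to produce an admissible blowup $\mathfrak{U}' \to \mathfrak{U}$ on which the pullback of $\mathcal{L}$ becomes flat, hence locally free of rank $d$. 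The generic fiber of $\mathfrak{U}'$ is still $U$, and it is naturally covered by smaller affinoid opens on each of which $\Lambda$ is free of rank $d$; the $G$-stability of $\Lambda$ is automatic on each piece of this refined cover since the $G$-action is on the vector bundle $E$, not on $U$.

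The main obstacle is precisely this last step: the naive sum $\sum \rho(g_i)(\Lambda_0)$ is only guaranteed to be a coherent $\Ocal_U^+$-submodule of $E|_U$, and its fibers may acquire $\varpi_x$-torsion arising from non-trivial relations among the translates $\rho(g_i)\Lambda_0$ inside $E$. Refining $U$ (equivalently, admissibly blowing up the formal model) is genuinely necessary to make $\Lambda$ locally free, while $G$-stability is preserved under any such refinement.
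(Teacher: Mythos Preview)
Your construction of $\Lambda=\sum_i\rho(g_i)\Lambda_0$ is exactly the paper's $E^+$, and the sandwich $\Lambda_0\subset\Lambda\subset p^{-N}\Lambda_0$ is reached the same way (your open-stabilizer/finite-index argument is equivalent to the paper's ``compact $\Rightarrow$ bounded matrix entries''). The divergence is only in verifying that this is a lattice. The paper does \emph{not} flatten or shrink further: it applies Lemma~\ref{latticeiftorsfree} directly, asserting that the fiber $E^+\otimes k(x)^+$ at a rigid point $x$ is generated by the $G$-translates of the standard basis of $(k(x)^+)^d$ and hence torsion-free---implicitly identifying this fiber with the $k(x)^+$-span of those translates inside $k(x)^d$. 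Your Raynaud--Gruson route is a legitimate alternative but heavier than necessary, and one point needs adjusting: it is the \emph{strict transform} (equivalently the $p$-torsion-free quotient of the pullback) that an admissible blowup renders locally free, not the naive pullback, which typically acquires torsion. The torsion you anticipate really lives in the sheaf attached to the $A^+$-module $\Gamma(U,E^+)$ via $V\mapsto\Gamma(U,E^+)\otimes_{A^+}\Ocal_X^+(V)$; if instead one works with $E^+$ as the image \emph{subsheaf} of $E$, as the paper does, the fibers at rigid points are already torsion-free and no blowup is required.
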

\begin{proof}
We may assume that $E\cong\Ocal_X^d$ is trivial and hence there is a lattice $E_1^+=(\Ocal_X^+)^d\subset E$. As $G$ is compact the entries of the matrices of $g\in G$ acting on the standard basis have a common bound. Hence the $\Ocal_X^+$-submodule $E^+\subset E$ which is generated by the $G$-translates of $E_1^+$ is contained in $p^{-N}E_1^+$ for some large integer $N$. Especially it is coherent. We need to show that it is a lattice, and hence by Lemma $\ref{latticeiftorsfree}$ we only need to show that the stalks are torsion free. But if $e_1,\dots,e_d$ are generators of $(k(x)^+)^d=E_1^+\otimes k(x)^+$, then the translates of $e_1,\dots,e_d$ under the action of $G$ generate the stalk $E^+\otimes k(x)^+$. It follows that the stalks are torsion free. 
\end{proof}
\begin{cor}\label{locconst}
Let $X$ be an adic space locally of finite type and let $E$ be a vector bundle on $X$ endowed with a continuous action of a compact group $G$.
Then the semi-simplification of the reduction $E\otimes \overline{k(x)}$ is locally constant. 
\end{cor}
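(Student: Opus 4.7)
The plan is to descend the situation to the special fiber of a formal model and invoke Proposition \ref{famofHrep}. Since the claim is local on $X$, we may assume $X = \Spa(A,A^+)$ is affinoid of finite type over $\Q_p$ and, using the preceding lemma, that there is a $G$-stable $\Ocal_X^+$-lattice $E^+\subset E$ which is locally free of rank $d$. We then choose a formal model $\Xcal$ of $X$ together with a coherent $\Ocal_\Xcal$-module $\Ecal^+$ extending $E^+$ in the generic fiber, where $\Xcal$ is chosen fine enough that $\Gamma(\Ucal,\Ecal^+) = \Gamma(\Ucal^{\rm ad},E^+)$ on the affine formal opens $\Ucal\subset\Xcal$ realising a trivializing cover of $E^+$. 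The $\Ocal_X^+$-linear $G$-action on $E^+$ then transports to an $\Ocal_\Xcal$-linear $G$-action on $\Ecal^+$, and hence to a $G$-action on the coherent sheaf $\bar\Ecal^+ = \Ecal^+/p\Ecal^+$ on the special fiber $\bar\Xcal$.

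The crucial next step is to argue that this induced $G$-action on $\bar\Ecal^+$ factors through a finite quotient $H$ of $G$. Indeed, $\bar\Xcal$ is a noetherian $\Fbb_p$-scheme of finite type, and on any affine open the sections of $\bar\Ecal^+$ form a finitely generated module over an $\Fbb_p$-algebra of finite type carrying the discrete topology. The continuous action of the compact group $G$ on $E^+$ for its $p$-adic topology descends to a continuous action on this discrete target; hence the stabilizer of any finite generating set is an open subgroup of $G$. Running over a finite affine cover of $\bar\Xcal$ and intersecting these open subgroups produces a single open normal subgroup $N\subset G$ acting trivially on $\bar\Ecal^+$, and we set $H=G/N$.

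Since $\bar\Xcal$ is noetherian, it decomposes into finitely many connected components $\bar\Xcal_0,\dots,\bar\Xcal_r$, each open and closed. Applying Proposition \ref{famofHrep} to the $H$-equivariant vector bundle $\bar\Ecal^+|_{\bar\Xcal_i}$ on each component yields a semi-simple $\bar\Fbb_p[H]$-representation $V_i$ such that
\[\bigl((\bar\Ecal^+\otimes k(x_0))\otimes_{k(x_0)}\bar\Fbb_p\bigr)^{\rm ss} \cong V_i\]
for every $x_0\in\bar\Xcal_i$. For a point $x\in X$ specializing to $x_0\in\bar\Xcal_i$, the identity
\[\bar\Ecal^+\otimes k(x_0) = (E^+\otimes k(x)^+)\otimes_{k(x)^+}k(x_0)\]
recalled in Section~3, combined with the fact that semi-simplification commutes with extension of scalars between algebraically closed residue fields, identifies the $G$-representation $(E\otimes\overline{k(x)})$ with $V_i\otimes_{\bar\Fbb_p}\overline{k(x)}$. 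As the tube ${\rm sp}^{-1}(\bar\Xcal_i)\subset X$ is open, this yields the desired local constancy on each such tube.

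The main obstacle is the factorization of the $G$-action on $\bar\Ecal^+$ through a finite quotient of $G$: once this is secured, the statement reduces cleanly to Proposition \ref{famofHrep} applied componentwise on $\bar\Xcal$. The verification that semi-simplification is preserved under the relevant extensions of scalars is a standard Brauer--Nesbitt argument, exploiting that $\bar\Fbb_p$ is an algebraic closure of the residue fields $k(x_0)$ appearing on the special fiber.
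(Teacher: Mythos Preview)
Your proposal is correct and follows essentially the same route as the paper: reduce to the quasi-compact case with a $G$-stable $\Ocal_X^+$-lattice, pass to a formal model and its special fiber, observe that compactness and continuity force the action on $\bar\Ecal^+$ to factor through a finite quotient, and then invoke Proposition~\ref{famofHrep}. Your write-up is in fact more detailed than the paper's in two respects: you spell out the open-stabilizer argument for the finite-quotient step (the paper just asserts it), and you handle connectedness by decomposing $\bar\Xcal$ into components and working on the tubes, whereas the paper simply remarks that $X$ is connected if and only if $\Xcal$ is.
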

\begin{proof}
As the statement is local on $X$, we may assume that $X$ is quasi-compact and admits a $G$-stable $\Ocal_X^+$-lattice $E^+\subset E$. Let $\Xcal$ be a $\Z_p$-flat formal model of $X$ such that there exists a model $\Ecal^+$ of $E^+$ on $\Xcal$. Then $\Ecal^+$ defines a continuous $G$-representations on the special fiber $\bar\Ecal^+$ which is a vector bundle on the special fiber $\bar \Xcal$ of $\Xcal$. As $G$ is compact and the representations is continuous, the representation on $\bar\Ecal^+$ has to factor over some finite quotient $H$ of $G$. Now the claim follows from Proposition $\ref{famofHrep}$, as $X$ is connected if and only if $\Xcal$ is connected.
\end{proof}

\section{A remark on slope filtrations}

In this section we give an explicit example of a family $(\Ncal,\Phi)$ of $\phi$-modules over the relative Robba ring which is  not \'etale, but \'etale at all rigid analytic points (and hence $(\Ncal,\Phi)$ is purely of slope zero).
For this section we use different notations. 
Let $K$ be a totally ramified quadratic extension of $\Q_p$. Fix a uniformizer $\pi\in\Ocal_K$ and a compatible system $\pi_n\in\bar K$ of $p^n$-th roots of $\pi$. Let us write $K_{\infty}=\bigcup K(\pi_n)$  and $G_{K_\infty}={\rm Gal}(\bar K/K_\infty)$ for this section. Further let $E(u)\in\Z_p[u]$ denote the minimal polynomial of $\pi$.
Finally we adapt the notation from \cite{families} and write 
\[\Bcal_X^R=\Bcal_{X,\rig}^\dagger\ \text{and}\ \Bcal_X^{[0,1)}=\pr_{X,\ast}\Ocal_{X\times\Ubb}.\]

We consider the following family $(D,\Phi,\Fcal^\bullet)$ of filtered $\phi$-modules on 
\[X=\Pbb^1_K\times \Pbb_K^1.\]
Let $D=\Ocal_X^2=\Ocal_X e_1\oplus \Ocal_X e_2$ and $\Phi={\rm diag}(\varpi_1,\varpi_2)$, where $\varpi_1$ and $\varpi_2$ are the zeros of $E(u)$. 
We consider a filtration $\Fcal^\bullet$ of $D_K=D\otimes_{\Q_p}K$ such that $\Fcal^0=D_K$ and $\Fcal^2=0$. Fix an isomorphism $D\otimes_{\Q_p}K\cong \Ocal_X^2\oplus\Ocal_X^2$ and let the  filtration step $\Fcal^1$ be the universal subspace on $X$. This is a family of filtered $\phi$-modules in the sense of \cite{families}.
One easily computes that 
\[X^{\rm wa}=X\backslash\{(0,0),(\infty,\infty)\},\]
where $X^{\rm wa}\subset X$ is the weakly admissible locus defined in \cite[4.2]{families}.
Generalizing a construction of Kisin \cite{Kisin} the family $(D,\Phi,\Fcal^\bullet)$ defines a family $(\Mcal,\Phi)$ consisting of a vector bundle on $X^{\rm wa}\times\Ubb$ and an injection $\Phi:\phi^\ast\Mcal\rightarrow \Mcal$ such that $E(u)\coker \Phi=0$ (see \cite[Theorem 5.4]{families}).

We define the family $(\Ncal,\Phi)$ over $\Bcal_{X^{\rm wa}}^R$ as 
\begin{equation}\label{defofN}
(\Ncal,\Phi)=(\Mcal,\Phi)\otimes_{\Bcal_{X^{\rm wa}}^{[0,1)}}\Bcal_{X^{\rm wa}}^R.
\end{equation}
This is obviously a family of $\phi$-modules over the Robba ring which is \'etale at all rigid analytic points.

\begin{prop}\label{notetale}
The family $(\Ncal,\Phi)$ over $\Bcal_{X^{\rm wa}}^R$ defined in $(\ref{defofN})$ is not \'etale.
\end{prop}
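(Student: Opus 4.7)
My plan is to argue by contradiction. Suppose $(\Ncal,\Phi)$ is étale on $X^{\rm wa}$. Then the corollary to Theorem \ref{exgalreptheo} (translated into the Breuil-Kisin setup of this section) produces, on the tube of every closed point in the special fiber of any formal model of $X^{\rm wa}$, a family of $G_{K_\infty}$-representations associated to $\Ncal$ on that tube. In particular every rigid analytic point of $X^{\rm wa}$ lies in the admissible locus $X^{\rm wa,adm}$; combined with the openness of this locus (Theorem \ref{thmXadm}) and its partial properness (Proposition \ref{Xadmpartproper}), this forces $X^{\rm wa,adm}=X^{\rm wa}$, yielding a family $\Vcal$ of $G_{K_\infty}$-representations on the whole of $X^{\rm wa}$ inducing $\Ncal$. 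Since $X^{\rm wa}=X\setminus\{(0,0),(\infty,\infty)\}$ is the complement of finitely many closed points in the irreducible space $X$, it is connected; Corollary \ref{locconst} then forces the semisimplification $\bar\Vcal^{\rm ss}$ of the mod-$p$ reduction to be a single $G_{K_\infty}$-representation across all of $X^{\rm wa}$.

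The contradiction will come from comparing the fibers of $\Vcal$ at the two rigid points $(0,\infty)$ and $(\infty,0)$. At $(0,\infty)$ the filtration $\Fcal^1_1=\langle e_2\rangle$, $\Fcal^1_2=\langle e_1\rangle$ is compatible with the eigendecomposition $D=\langle e_1\rangle\oplus\langle e_2\rangle$, so the filtered $\phi$-module splits into rank-one summands and $\Vcal_{(0,\infty)}=\chi_1\oplus\chi_2$, where $\chi_1$ carries Frobenius $\varpi_1$ and Hodge-Tate weights $(0;1)$ with respect to the two embeddings $\tau_1,\tau_2:K\hookrightarrow\bar K$, while $\chi_2$ carries Frobenius $\varpi_2$ with weights $(1;0)$. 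At $(\infty,0)$ one analogously finds $\Vcal_{(\infty,0)}=\chi_1'\oplus\chi_2'$ with the same Frobenii but the Hodge-Tate weights swapped between embeddings. Using the theory of fundamental characters of $G_K$ one then verifies that $\{\bar\chi_1,\bar\chi_2\}\neq\{\bar\chi_1',\bar\chi_2'\}$ as unordered pairs of mod-$p$ characters, which contradicts the constancy established above.

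The main obstacle is precisely this explicit character calculation, which requires careful tracking of the embedding-indexed fundamental characters for the totally ramified quadratic extension $K/\Q_p$. A conceptually cleaner alternative, valid at least when $p>3$, proceeds via the determinant: $\det\Ncal$ is a rank-one $\phi$-module whose Frobenius equals $\pm E(u)$ up to units in $\Bcal^R$, and a gauge transformation $E(u)\leadsto \phi(\beta)E(u)/\beta$ with $\beta\in(\Bcal^R)^\times$ producing a unit of $\Acal^\dagger$ would, by matching the order of vanishing at $u=0$ (where $E(u)\equiv u^2\bmod p$ and $u\notin(\Acal^\dagger)^\times$), force $2=n(p-1)$ for some integer $n$; this fails for all $p>3$, so in that range $\det\Ncal$ (and hence $\Ncal$) is already obstructed from being étale, while the exceptional primes $p\in\{2,3\}$ require the character-theoretic route.
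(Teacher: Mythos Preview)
Your overall strategy---assume \'etale, obtain a global family of Galois representations on $X^{\rm wa}$, then contradict local constancy of the semisimplified mod-$p$ reduction---matches the paper's. But both of your implementations have problems.

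The determinant argument is simply wrong. In $\Acal^\dagger$ the variable $u$ \emph{is} a unit: $u^{-1}$ lies in $\bfA_K$ (Laurent series with integral coefficients) and in $\bfB^\dagger$, hence in their intersection $\bfA^\dagger$. Since $E(u)\equiv u^2\pmod{\pi}$, one has $E(u)=u^2\cdot(1+\pi(\ldots))\in(\Acal^\dagger)^\times$, so the rank-one module with $\Phi=E(u)$ already carries an \'etale lattice---no gauge transformation is needed, and no obstruction of the form $2=n(p-1)$ arises. Conceptually this had to be so: $\det\Ncal$ is the Breuil--Kisin module of a crystalline character (the cyclotomic character times an unramified twist), which is \'etale for every $p$. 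You are confusing $\Acal^\dagger$ with $\Ocal_K[[u]]$, where $u$ is indeed a non-unit.

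Your first approach is incomplete at exactly the point you flag, and the paper avoids this difficulty by a different choice of test points. Rather than the two split points $(0,\infty)$ and $(\infty,0)$, the paper picks three $K$-points $x_1,x_2,x_3$, writes down the Kisin modules there explicitly, reduces mod $\pi$ to obtain the $\phi$-modules $\left(\begin{smallmatrix}0&-u^2\\1&0\end{smallmatrix}\right)$, $\left(\begin{smallmatrix}0&-u\\u&0\end{smallmatrix}\right)$, $\left(\begin{smallmatrix}-u&0\\0&-u\end{smallmatrix}\right)$ over $\Fbb_p[[u]]$, and then invokes Caruso's classification \cite{Caruso} together with Breuil's full-faithfulness theorem \cite[Theorem~3.4.3]{Breuil} to conclude that the corresponding residual $G_K$-representations are pairwise non-isomorphic. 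No computation with fundamental characters is needed. (Your two split points both give diagonal reductions, and carrying out the computation one finds $\mathrm{diag}(-u,-u)$ versus $\mathrm{diag}(u,u)$; these happen to be non-isomorphic over $\Fbb_p((u))$ for odd $p$, so your route is salvageable there, but you have not done this, and it fails for $p=2$.) Finally, a small gap: ``open, partially proper, and contains all rigid points'' does not by itself force $X^{\rm wa,\mathrm{adm}}=X^{\rm wa}$; the paper instead exhibits an explicit covering of $X^{\rm wa}$ by tubes of closed points of suitable formal models (the Lemma preceding the Claim) and applies Theorem~\ref{exgalreptheo} on each tube.
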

\begin{lem}
There exists a covering $X^{\rm wa}=\bigcup U_i$, where each $U_i$ is a closed disc or a closed annulus. Further there exists $x_i$ in the special fiber of the canonical formal model of $U_i$ such that $X^{\rm wa}=\bigcup V_i$, where $V_i\subset U_i$ is the tube of $x_i$.
\end{lem}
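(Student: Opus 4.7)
The plan is to cover $X^{\rm wa}$ using the standard chart decomposition of $\Pbb^1_K\times\Pbb^1_K$. Fix affine coordinates $z_i$ and $w_i=1/z_i$ on the $i$-th factor, and write $\boldB_{z_i},\boldB_{w_i}\subset\Pbb^1_K$ for the two closed unit discs covering $\Pbb^1_K$. The four closed bidiscs $A_{\epsilon_1\epsilon_2}$ for $\epsilon_i\in\{0,\infty\}$ cover $\Pbb^1_K\times\Pbb^1_K$, and only $A_{00}$ and $A_{\infty\infty}$ contain the two bad points. The two off-diagonal bidiscs $A_{0\infty}=\boldB_{z_1}\times\boldB_{w_2}$ and $A_{\infty 0}=\boldB_{w_1}\times\boldB_{z_2}$ already lie entirely in $X^{\rm wa}$, and I would include them as closed discs in the cover.

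For the punctured bidiscs I would use the exhaustion
\[
A_{00}\setminus\{(0,0)\}=\bigcup_{n\ge 1}\bigl(\{|z_1|\le 1,\,p^{-n}\le|z_2|\le 1\}\cup\{p^{-n}\le|z_1|\le 1,\,|z_2|\le 1\}\bigr),
\]
and the analogous exhaustion of $A_{\infty\infty}\setminus\{(\infty,\infty)\}$ in $w$-coordinates. Each summand is a closed disc times a closed annulus and so enters the covering as a closed annulus in the sense of the lemma. Together with $A_{0\infty}$ and $A_{\infty 0}$, this produces a covering $X^{\rm wa}=\bigcup U_i$ of the required type.

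For each $U_i$ I would then designate $x_i$ as a closed point of the special fiber of its canonical formal model, enlarging the index set so that every closed point of each special fiber appears as some $x_i$. For a closed bidisc $\Spf\Ocal_K\langle T_1,T_2\rangle$ and a closed point $(a,b)\in\Abb^2_k$, the tube is $\{|T_1-\tilde a|<1,\,|T_2-\tilde b|<1\}$ for any lift $\tilde a,\tilde b\in\Ocal_K$, and ranging over all $(a,b)$ exhausts the closed bidisc. For a closed disc times annulus $\Spf\Ocal_K\langle T_1,T_2,S_2\rangle/(T_2 S_2-p^n)$, the node $T_1=T_2=S_2=0$ of the cross-shaped special fiber yields the tube $\{|T_1|<1,\,p^{-n}<|T_2|<1\}$, and the other closed points of the cross supply tubes around the boundary strata $|T_1|=1$, $|T_2|=1$, $|T_2|=p^{-n}$, collectively exhausting the polyannulus.

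Verification that $\bigcup V_i=X^{\rm wa}$ then reduces to the standard fact that every point of an affinoid over $\Q_p$ specializes to a closed point of its canonical formal model and hence lies in the corresponding tube; a rigid point of $A_{00}\setminus\{(0,0)\}$ approaching the missing origin is absorbed by a polyannular layer as soon as $n$ is large enough that $\max(|z_1|,|z_2|)\ge p^{-n}$. The main technical point I anticipate is the bookkeeping across the overlaps of the four chart bidiscs and across the polyannular layers, rather than any essential difficulty.
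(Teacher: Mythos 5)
Your first step (covering $X^{\rm wa}$ by closed bidiscs and polyannuli) is essentially the paper's, but the second step contains a genuine gap. The assertion you rely on -- that every point of an affinoid adic space specializes to a closed point of the special fiber of its canonical formal model -- is false: the Gauss point of $\Spa(\Q_p\langle T_1,T_2\rangle,\Z_p\langle T_1,T_2\rangle)$ specializes to the \emph{generic} point of $\Abb^2_{\Fbb_p}$, and the same happens for all higher-rank and Gauss-type points along the boundary strata. Hence the union of the tubes ${\rm sp}^{-1}(x_i)$ over \emph{all} closed points $x_i$ of the special fiber is only the complement of ${\rm sp}^{-1}(\{\text{non-closed points}\})$, a proper open subset; it misses the Gauss point. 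Since the Gauss point of $X=\Pbb^1_K\times\Pbb^1_K$ lies in $X^{\rm wa}$ and specializes to a non-closed point in every one of the formal models occurring in your covering (the four unit bidiscs and the polyannular layers), your $V_i$ do not cover $X^{\rm wa}$. This is not a harmless omission: in the application the families of representations are constructed on the tubes and then glued, so the covering must hold for all adic points, not just rigid ones.

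The paper sidesteps this by never trying to reach the boundary of a fixed model with tubes of other closed points. It covers $X^{\rm wa}$ by closed bidiscs $\{\lVert x\rVert\le s\}$ and polyannuli $\{|x_1|,|x_2|\le s_2,\ s_1\le |x_j|\}$ for \emph{all} radii $s,s_1,s_2\in p^{\Q}$, and for each such $U_i$ takes $x_i$ to be the origin, resp.\ the node, of the special fiber. The tube of that single point is then the open bidisc $\{\lVert x\rVert<s\}$, resp.\ the open polyannulus $\{|x_1|,|x_2|<s_2,\ s_1<|x_j|\}$, and as the radii vary these open sets already exhaust $\Abb^2$, resp.\ $\Abb^2\setminus\{0\}$: by continuity of the valuations, every adic point satisfies $|x_j(x)|<s$ for $s\gg0$ and, if it is not the origin, $\max_j|x_j(x)|>s_1$ for $s_1\ll1$. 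Your argument is repaired by the same modification: keep your charts, but replace ``all closed points of one model of fixed radius'' by ``the origin/node of models of all radii in $p^{\Q}$.''
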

\begin{proof}
We can cover the weakly admissible set $X^{\rm wa}=X_1\cup X_2 \cup X_3 \cup X_4$, where 
\begin{align*}
X_1&= \big((\Pbb^1\backslash\{\infty\})\times (\Pbb^1\backslash\{\infty\})\big)\backslash\{(0,0)\}\cong \Abb^2\backslash\{0\},\\
X_2&=(\Pbb^1\backslash\{\infty\})\times (\Pbb^1\backslash\{0\})\cong \Abb^2,\\
X_3&=(\Pbb^1\backslash\{0\})\times (\Pbb^1\backslash\{\infty\})\cong \Abb^2,\\
X_4&=\big((\Pbb^1\backslash\{0\})\times(\Pbb^1\backslash\{0\})\big)\backslash\{(\infty,\infty)\}\cong \Abb^2\backslash\{0\}.
\end{align*}
Then obviously each of the $X_i$ can be covered by open subsets of the form  
\begin{align*}
 U&\cong \{x\in \Abb^2 \mid ||x||\leq s\}&&\text{for some}\ s>0,\\
 U&\cong \{x=(x_1,x_2)\in \Abb^2 \mid |x_i|\leq s_2,\ s_1\leq |x_1|\}&&\text{for some}\ 0<s_1\leq s_2,\\
 U&\cong \{x=(x_1,x_2)\in \Abb^2 \mid |x_i|\leq s_2,\ s_1\leq |x_2|\}&&\text{for some}\ 0<s_1\leq s_2,
\end{align*}
with $s,s_1,s_2\in p^\Q$. 

Choose a covering $U_i$ of $X^{\rm wa}$, where each $U_i$ is of the form described above, and let $\Ucal_i$ be the canonical formal model of $U_i$ with special fiber $\Abb^2$ resp. $\Abb^1\times (\Abb^1\cup \Abb^1)$. Here the two affine lines $\Abb^1\cup \Abb^1$ are glued together along the zero section. Let $V_i\subset U_i$ be the tube of the zero section. Then the $V_i$ also cover $X^{\rm wa}$.
\end{proof}

{\bf Claim:}\emph{
If the family $(\Ncal,\Phi)$ over $X^{\rm wa}$ defined by $(\ref{defofN})$ was \'etale, then there would exist a family of crystalline $G_K$-representations $\Ecal$ on $X^{\rm wa}$ such that }
\[D_{\rm cris}(\Ecal)=(D,\Phi,\Fcal^\bullet).\]

\begin{proof}[Proof of claim] Assume that the family $(\Ncal,\Phi)$ is \'etale. By Theorem $\ref{exgalreptheo}$\footnote{Strictly speaking Theorem $\ref{exgalreptheo}$ does not apply here, as we use a slightly different kind of Frobenius in \cite{families}. However the proof of the theorem works verbatim with the Frobenius used in loc.~cit.} there exists a family of $G_{K_\infty}$-representations on each of the $V_i$ which gives rise to the restriction of $\Mcal\otimes \Bcal_X^R$ to $V_i$.
Hence, by \cite[Theorem 8.25]{families}, there exists a family of crystalline $G_K$-representations on each of the $V_i$ giving rise to the restriction of our family of filtered isocrystals to $V_i$.
By the construction in \cite{families} this family is naturally contained in $D\otimes_{\Ocal_{V_i}} (\Ocal_{V_i}\widehat{\otimes}B_{\rm cris})$ and in fact identified with 
\[\Fil^0\big(D\otimes_{\Ocal_{V_i}} (\Ocal_{V_i}\widehat{\otimes}B_{\rm cris})\big)^{\Phi=\id}.\]
Further the space $X^{\rm wa}$ can be covered by the $V_i$. It follows that we can glue these families of crystalline representations to a family $\Ecal$ of crystalline $G_K$-representations on $X^{\rm wa}$ such that $D_{\rm cris}(\Ecal)=(D,\Phi,\Fcal^\bullet)$. 
\end{proof}
\begin{proof}[Proof of Proposition $\ref{notetale}$]
Assume by way of contradiction that the family $(\Ncal,\Phi)$ is \'etale. By the above claim there exists a family of $G_K$-representations $\Ecal$ on $X^{\rm wa}$ such that $V_{\rm cris}(D\otimes k(x))=\Ecal\otimes k(x)$ for all $x\in X$.

Now the space $X^{\rm wa}$ contains $K$-valued points $x_1,x_2$ and $x_3$ such that 
\begin{align*}
(\Mcal,\Phi)\otimes k(x_1)&\cong\left(\Ocal_{\Ubb_K}^2,\begin{pmatrix}0 & -E(u)\\1 & \varpi_1+\varpi_2\end{pmatrix}\right)\\ 
(\Mcal,\Phi)\otimes k(x_2)&\cong\left(\Ocal_{\Ubb_K}^2,\begin{pmatrix} 0 & -(u-\varpi_1)\\ (u-\varpi_2) & \varpi_1+\varpi_2\end{pmatrix}\right) \\
(\Mcal,\Phi)\otimes k(x_3)&\cong\left(\Ocal_{\Ubb_K}^2,\begin{pmatrix}-(u-\varpi_1) & 0 \\ 0 & -(u-\varpi_2)\end{pmatrix}\right).
\end{align*}
The semi-simplifications of the reduction modulo $\pi$ of these $\phi$-modules are 
\begin{align*} (\Mfrak,\Phi)\otimes \overline{ k(x_1)}&\cong\left(\Fbb_p[[u]]^2,\begin{pmatrix}0&-u^2\\ 1 & 0\end{pmatrix}\right),\\
 (\Mfrak,\Phi)\otimes \overline{ k(x_2)}&\cong\left(\Fbb_p[[u]]^2,\begin{pmatrix}0&-u\\ u &  0\end{pmatrix}\right),\\
 (\Mfrak,\Phi)\otimes \overline{ k(x_3)}&\cong\left(\Fbb_p[[u]]^2,\begin{pmatrix}-u&0\\ 0 & -u\end{pmatrix}\right).
 \end{align*}
Using Caruso's classification \cite[Corollary 8]{Caruso}  of those $\phi$-modules we find that they are all non-isomorphic.
After inverting $u$ these $\phi$-modules correspond (up to twist) under Fontaine's equivalence of categories to the restriction to $G_{K_\infty}$ of the reduction modulo $\pi$ of the constructed Galois representations $\Ecal\otimes k(x_i)$. By \cite[Theorem 3.4.3]{Breuil} this restriction is fully faithful and hence we find that 
\[\overline{\Ecal\otimes k(x_i)}\not\cong \overline{\Ecal\otimes k(x_j)}\]
as $G_K$-representations for $i\neq j$.

However, by Proposition $\ref{locconst}$, we know that the reduction modulo $p$ of the $G_K$-representation on the fibers of the family $\Ecal$ has to be constant. This is a contradiction, and hence the family $(\Ncal,\Phi)$ is not \'etale.
\end{proof}

\begin{rem}
This situation seems to be typical for the weakly admissible locus in the fibers over some fixed Frobenius $\Phi$. If we fix the Frobenius (or even its semi-simplification) then the main result of \cite{adquot} shows that the weakly admissible locus $X^{\rm wa}$ is a Zariski open subset of some flag variety $X$. It should be possible to cover $X^{\rm wa}$ by open subspaces $U_i$ such that there exists formal models $\Ucal_i$ and (locally) closed subschemes $Z_i\subset \bar\Ucal_i$ such that the tubes $V_i\subset U_i$ of $Z_i$ also cover $X^{\rm wa}$. If Conjecture $\ref{Conjecture}$ holds true, then the same argument as in this section shows that the weakly admissible family  is not \'etale unless all residual representations are isomorphic.
\end{rem}

This result also clarifies the relation between certain subspaces of a stack of filtered $\phi$-modules considered in \cite{families}. 
In loc. cit. we construct three open substacks of the stack $\Dfrak$ of filtered $\phi$-modules
\[\Dfrak^{\rm adm}\subset \Dfrak^{\rm int}\subset \Dfrak^{\rm wa}\subset \Dfrak.\]
The stack $\Dfrak^{\rm wa}$ parametrizes those families of  filtered $\phi$-modules which are weakly admissible, while the stack $\Dfrak^{\rm int}$ parametrizes those filtered $\phi$-modules such that the associated family of $\phi$-modules on the open unit disc admits an \'etale lattice after restriction to the (relative) Robba ring.
Finally $\Dfrak^{\rm adm}$ is the maximal (open) subspace over which there exists a family of crystalline Galois representations giving rise to the restriction of the universal family of filtered $\phi$-modules.
In \cite {families}, the stacks $\Dfrak^{\rm int}$ and $\Dfrak^{\rm adm}$ are only constructed in the case of Hodge-Tate weights in $\{0,1\}$.

As already stressed in \cite{families}, the inclusion $\Dfrak^{\rm adm}\subset \Dfrak^{\rm int}$ is strict, as is already shown by the family of unramified characters for example. The above example shows that the inclusion $\Dfrak^{\rm int}\subset \Dfrak^{\rm wa}$ is strict as well.

\end{document}